\setlist[enumerate]{leftmargin=2.2em,itemsep=0.22em,topsep=0.45em}
\setlist[itemize]{leftmargin=2.0em,itemsep=0.20em,topsep=0.45em}
\newcommand{\andersonkeywords}{}
\providecommand{\keywords}[1]{\gdef\andersonkeywords{#1}}
\newcommand*{\Spinc}{\mathrm{Spin}^c}
\newcommand*{\Spin}{\mathrm{Spin}}
\newcommand*{\SO}{\mathrm{SO}}
\newcommand*{\uU}{\mathrm{U}}
\newcommand*{\PUH}{\mathrm{PU}(\mathcal{H})}
\newcommand*{\so}{\mathfrak{so}}
\newcommand*{\spinc}{\mathfrak{spin}^c}
\newcommand*{\FunPic}{\mathrm{FunPic}}
\newcommand*{\Fred}{\mathrm{Fred}}
\newcommand*{\TMF}{\mathrm{TMF}}
\newcommand*{\RZ}{\mathbb{R}/\mathbb{Z}}
\newcommand*{\pt}{\mathrm{pt}}
\newcommand*{\GL}{\mathrm{GL}}
\newcommand*{\Cl}{\mathbb{C}\mathrm{l}}
\newcommand*{\ch}{\mathrm{ch}}
\newcommand*{\CS}{\mathrm{CS}}
\newcommand*{\im}{\mathrm{im}}
\newcommand*{\id}{\mathrm{id}}
\newcommand*{\pr}{\mathrm{pr}}
\newcommand*{\cw}{\mathrm{cw}}
\newcommand*{\dR}{\mathrm{dR}}
\newcommand*{\clo}{\mathrm{clo}}
\newcommand*{\Hom}{\mathrm{Hom}}
\newcommand*{\coker}{\mathrm{coker}}
\newcommand*{\End}{\mathrm{End}}
\newcommand*{\tr}{\mathrm{tr}}
\newcommand*{\cts}{\mathrm{cts}}
\newcommand*{\CP}{{\mathbb{C}\mathrm{P}}}
\newcommand*{\Mfld}{\mathsf{Mfld}}
\newcommand*{\modZ}{\bmod{\mathbb{Z}}}
\newcommand*{\Struc}[1]{\mathbf{Spin^c_{\tau}}(#1)}
\newcommand*{\difStruc}[1]{\widehat{\mathbf{Spin^c_{\widehat{\tau}}}}(#1)}
\newcommand*{\DD}{Dixmier--Douady }
\newcommand{\be}{\begin{equation}}
	\newcommand{\ee}{\end{equation}}
\theoremstyle{plain}
\newtheorem{thm}{Theorem}[section]
\newtheorem{prop}[thm]{Proposition}
\newtheorem{lem}[thm]{Lemma}
\newtheorem{defi}[thm]{Definition}
\theoremstyle{remark}
\newtheorem{rmk}[thm]{Remark}
\numberwithin{equation}{section}
\newtheorem{conjecture}{Conjecture}[section]
\title{\makebox[\textwidth][c]{\parbox{1.10\textwidth}{\centering Differential Models for the Anderson Dual to Twisted $\mathrm{Spin}^c$-Bordism and a Twisted Anomaly Map}}}
\author{Fei Han and Yuanchu Li}
\keywords{differential cohomology, twisted cohomology, twisted $\mathrm{Spin}^c$-bordism, twisted $K$-theory, Anderson duality, bundle gerbes}
\date{}
\begin{document}
	
	\maketitle
	%\vspace{-0.99\baselineskip}
	
	\begin{abstract}

			We construct differential models for degree-3 twisted $\mathrm{Spin}^c$-bordism and for its Anderson dual. The model for the differential Anderson dual is based on the framework of Yamashita--Yonekura. Using these differential models, we define a twisted anomaly map from differential twisted $K$-theory with inverse twist to the differential Anderson dual of twisted $\mathrm{Spin}^c$-bordism. The construction is described geometrically in terms of bundle gerbes, gerbe modules, and reduced eta-invariants of Dirac operators associated to the twisted data. Conceptually, this map is expected to be related to the anomalies of twisted $1|1$-dimensional supersymmetric field theories, in line with the perspectives of Stolz--Teichner and Freed--Hopkins.
	\end{abstract}
	
	%\printkeywords
	
	\tableofcontents

	\section{Introduction}

	The purpose of this paper is to construct 
	concrete (co)cycle differential models for
	{twisted} $\Spinc$-bordism, its Anderson dual, and 
	a twisted anomaly map. 
	More precisely, we first give definitions for differential extensions to \textit{twisted} (co)homology theories in the spirit of~\cite{BS10, BunNik},  then build a concrete differential model for the twisted \textit{homology} theory, namely twisted $\Spinc$-bordism.  Using this differential model, we build a related differential twisted \textit{cohomology} theory, following the differential Anderson dual formalism developed in~\cite{Yam1}. 
	After that, we formulate these models in a compatible gerbe-theoretic language~\cite{Mur96,BCMMS}, and give a geometric description of the twisted anomaly map
	\[
	\widehat{\Phi}_{\widehat{\mathcal G}}\colon \widehat K^0(X,\widehat{\mathcal G}^{-1})\to (\widehat{I\Omega^{\Spinc}_{\mathrm{dR}}})^{2k}(X,\widehat{\mathcal G})
	\]
	in terms of differential-geometric data. 
	Our background motivation is from field theory, namely the Freed--Hopkins description of invertible field theories via Anderson duals
	of bordism spectra \cite{freed2021reflection}, and the Stolz--Teichner program which relates supersymmetric Euclidean field theories to generalized
	cohomology \cite{stolz2004elliptic, ST11}.

	\subsection{Background}
	
	Let $G=\{G_d,s_d,\rho_d\}_{d\in\mathbb Z_{\ge 0}}$ be a system of compact Lie groups, equipped for each
	$d$ with structure maps
	\[
	s_d\colon G_d\longrightarrow G_{d+1},
	\qquad
	\rho_d\colon G_d\longrightarrow O(d,\mathbb R),
	\]
	with $\rho_{d+1}\circ s_d$ compatible with the standard inclusions $O(d,\mathbb R)\hookrightarrow O(d+1,\mathbb R)$. 
	Write $\Omega^G$ for the associated stable tangential $G$-bordism theory and $(I\Omega^G)^*$ for its Anderson dual (see \cite[App.~B]{HS} and \cite[App.~B]{freed2007uncertainty}).
	Freed and Hopkins formulated the following conjecture for invertible quantum field theories:
	
	\begin{conjecture}[Freed--Hopkins {\cite[Conj.~8.37]{freed2021reflection}}]\label{Conj1}
		There is a natural $1\!:\!1$ correspondence
		\[
		\left\{
		\begin{array}{l}
			\text{deformation classes of reflection-positive,}\\
			\text{invertible, $n$-dimensional, fully extended  }\\
			\text{field theories with symmetry type $G$}
		\end{array}
		\right\}
		\;\simeq\;
		(I\Omega^G)^{n+1}(\pt).
		\]
	\end{conjecture}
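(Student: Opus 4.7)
The plan is to interpret the right-hand side $(I\Omega^G)^{n+1}(\pt)$ as the classifying group of deformation classes on the left, following the strategy of Freed-Hopkins which combines the cobordism hypothesis with an Anderson-duality interpretation of reflection positivity.

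First, I would give a precise mathematical formulation of the left-hand side. An $n$-dimensional fully extended field theory with symmetry type $G$ is a symmetric monoidal functor from the $(\infty,n)$-category $\mathrm{Bord}_n^G$ into some symmetric monoidal $(\infty,n)$-category $\mathcal{C}$; imposing invertibility restricts the target to the Picard $\infty$-groupoid $\mathcal{C}^{\times}$. After $n$-fold delooping, this Picard groupoid yields a connective spectrum $E$, and the space of invertible theories becomes a mapping spectrum $\Map(\mathrm{Bord}_n^{G,\times}, E)$ whose $\pi_0$ records deformation classes.

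Second, I would apply the cobordism hypothesis of Baez-Dolan-Lurie together with the Galatius-Madsen-Tillmann-Weiss theorem to identify the classifying spectrum of $\mathrm{Bord}_n^{G,\times}$ with an appropriate shift of the Madsen-Tillmann spectrum $MTG$. Deformation classes of invertible TFTs valued in a target spectrum $E$ then compute as $E$-cohomology of $MTG$, which by Atiyah-style duality for Thom spectra is naturally expressible in terms of $G$-bordism.

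Third, and this is the decisive conceptual step, I would implement reflection positivity. The orientation-reversing involution on bordisms together with Hermitian conjugation on the target equips the spectrum of invertible theories with a $\mathbb{Z}/2$-action, and the Freed-Hopkins analysis singles out the reflection-positive invertible theories as precisely those classified by the Anderson dual spectrum $I\mathbb{Z}$ (applied to $MTG$). Passing from $I\mathbb{Z}$-cohomology of $MTG$ to Anderson dual cohomology of the $G$-bordism spectrum then yields the identification $(I\Omega^G)^{n+1}(\pt)$; the degree shift by $+1$ reflects the standard Anderson-dual pairing $\pi_n(IE) \cong \Hom(\pi_{-n}E,\mathbb{Z}) \oplus \mathrm{Ext}(\pi_{-n-1}E,\mathbb{Z})$.

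The main obstacle is the faithful axiomatization of reflection positivity at the level of $(\infty,n)$-categories and its identification with Anderson duality: one must encode time-reversal symmetry, a Hermitian pairing on state spaces, and positivity of partition functions all at once, and then prove that their collective imposition matches the Anderson-duality constraint on target spectra. Secondary obstacles include (i) justifying the fully-extended reduction for theories not manifestly fully-extended, where one leverages invertibility to circumvent dualizability issues, and (ii) pinning down the $+1$ degree shift and orientation conventions, which depend on the choice of normal versus tangential $G$-structure and must be fixed coherently across all steps.
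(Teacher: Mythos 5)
This statement is a \emph{conjecture}, not a theorem: the paper itself does not prove it, and cites it verbatim from Freed-Hopkins~\cite[Conj.~8.37]{freed2021reflection} as motivation for the constructions that follow. Immediately after the conjecture statement, the paper notes that Freed and Hopkins prove it only in the topological case and only after restricting the right-hand side to its torsion subgroup; the full statement remains open. Your write-up is therefore not a proof but a high-level recapitulation of the Freed-Hopkins strategy (cobordism hypothesis plus GMTW to pass to $MTG$, then Anderson duality to encode reflection positivity), and you yourself acknowledge that the decisive step — faithfully axiomatizing reflection positivity at the level of $(\infty,n)$-categories and identifying it with Anderson duality — is precisely the obstacle that no one has yet overcome in full generality. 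There is nothing wrong with the outline as an account of the conjecture's provenance and plausibility, but it should not be presented as a proof, and the present paper neither attempts nor claims one.
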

	\noindent They prove the statement in the topological case after restricting the right hand side to the torsion subgroup \cite[Thm.~1.1]{freed2021reflection}. 
	As supporting evidence to the conjecture, a differential refinement, and in turn a physically motivated de Rham model for the theory $(I\Omega^G)^*$ are constructed by Yamashita and Yonekura by abstracting certain properties of invertible QFTs~\cite{Yam1}. 
	
	Another guiding vision is due to Stolz and Teichner \cite{ST11}: supersymmetric Euclidean field theories should furnish cocycles for generalized cohomology. 
	In dimension one, there is an established identification (see \cite[§3.2]{stolz2004elliptic}, \cite{hohnhold2010minimal})
	\begin{equation}\label{K}
		\left\{
		\begin{array}{l}
			\text{degree $-n$, 1-dimensional supersymmetric}\\
			\text{field theories over $X$}
		\end{array}
		\right\}\Big/\text{concordance}
		\;\simeq\;
		K^{-n}(X).
	\end{equation}
	In dimension two, the analogous statement remains a far-reaching conjecture for the spectrum $\TMF$ of topological modular forms 
	\begin{conjecture}[Segal--Stolz--Teichner {\cite{ST11}}]
		There is a $1\!:\!1$ correspondence
		\begin{equation}
			\left\{
			\begin{array}{l}
				\text{fully extended, degree $-n$, 2-dimensional}\\
				\text{supersymmetric field theories over $X$}
			\end{array}
			\right\}\Big/\text{concordance}
			\;\simeq\;
			\TMF^{-n}(X).
		\end{equation}
	\end{conjecture}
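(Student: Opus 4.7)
The Segal-Stolz-Teichner conjecture is of course a major open problem and any proof sketch must be understood as speculative; what follows is the broad strategy I would attempt and an honest assessment of where it currently breaks down. The plan is to build a natural map
\[
\alpha\colon\bigl\{\text{degree $-n$, $2|1$-dimensional SFTs over }X\bigr\}/\text{concordance}\longrightarrow\TMF^{-n}(X),
\]
show that the left-hand side satisfies the Eilenberg-Steenrod axioms of a generalized cohomology theory, and then prove that $\alpha$ induces an isomorphism on a point, after which a standard Yoneda-style argument upgrades the comparison to all finite CW complexes~$X$. The guiding analogy is the theorem of Hohnhold-Stolz-Teichner in dimension one, where a $1|1$-dimensional supersymmetric Euclidean field theory over $X$ encodes a family of Dirac operators and $\alpha$ reduces to the Fredholm-index picture of $K^{-n}(X)$.

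First I would set up the source category with the full Stolz-Teichner formalism of smooth symmetric monoidal functors out of a bordism category of $2|1$-Euclidean supermanifolds with maps to~$X$, together with an appropriate notion of degree (for instance, twisted target categories of graded topological vector spaces). Concordance must then be shown to be compatible with disjoint union and composition, so that the quotient is a homotopy functor and satisfies excision; the Mayer-Vietoris step should follow by locality of field theories and a partition-of-unity construction of interpolating SFTs, analogous to the $K$-theoretic case. On the target side, I would use the Lurie-Hopkins-Miller realization of $\TMF$ via the moduli stack of derived elliptic curves, or equivalently the Goerss-Hopkins-Miller sheaf of $E_\infty$-rings, which is best suited to matching the geometric data coming from super-tori.

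The central construction of $\alpha$ proceeds by dimensional reduction along a super-circle of modulus~$\tau$ in the upper half plane: integrating out this circle produces a family of $1|1$-theories over $X$ whose super-trace gives a modular form valued in a suitable cohomology of $X$, in the spirit of Witten's derivation of the elliptic genus. The content of $\TMF$-refinement is that modularity, holomorphicity at the cusp in the topological sense, and anomaly cancellation of the original $2|1$-theory together force the resulting cocycle to lift from $\mathrm{MF}\otimes H^*(X)$ to $\TMF^{-n}(X)$. Injectivity of $\alpha$ on $\pt$ should be attacked by showing that an SFT with vanishing $\TMF$-class admits an explicit null-concordance, using the fact that $\TMF^{-n}(\pt)$ is detected by partition functions over all super-tori together with a finite collection of higher-genus refinements.

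The hard part -- and the reason the conjecture has resisted proof for over two decades -- will be surjectivity of $\alpha$. Unlike $K$-theory, where Atiyah-Jänich provides a manifest geometric model by Fredholm operators, no geometric category is presently known to realize every class of $\TMF$; in particular one must somehow produce an honest $2|1$-dimensional SFT from each of the exotic torsion classes at the primes $2$ and $3$, whose very existence is detected only by obstruction-theoretic methods in derived algebraic geometry. A secondary but still serious difficulty is making the category of $2|1$-Euclidean field theories analytically robust enough, beyond the perturbative regime, to carry out the gluing and excision arguments. I would expect any eventual proof to combine vertex operator algebra and chiral conformal field theory input on the SFT side with the factorization homology and derived moduli descriptions of $\TMF$ on the other, and it is precisely at this junction that I would place my effort.
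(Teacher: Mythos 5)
The statement you were asked to address is labeled \emph{Conjecture} in the paper (the Segal-Stolz-Teichner conjecture), and the paper does not prove it nor claim to; it is quoted purely as background motivation for the twisted anomaly map and as a parallel to the established $K$-theoretic correspondence \eqref{K}. There is therefore no proof in the paper to compare your proposal against. Your write-up is candid that the conjecture remains open and that your sketch is speculative, which is the correct assessment: the central difficulty you identify (surjectivity of the dimensional-reduction/elliptic-genus map onto all of $\TMF$, especially the exotic $2$- and $3$-primary torsion detected only by Goerss-Hopkins-Miller obstruction theory) is indeed where all known attempts stall, and no argument currently closes that gap. As a blind proof attempt this should be scored as ``not a proof,'' not because your strategy is wrong but because the statement is an open conjecture that neither you nor the paper proves.

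One small contextual note: for the purposes of the present paper, the relevant established result is the $1$-dimensional analogue \eqref{K} (Hohnhold-Stolz-Teichner) together with the twisted extension discussed in \cite[\S5]{ST11}, which motivates interpreting $\widehat{K}^{0}(X,\widehat{\tau}^{-1})$ as concordance classes of twisted $1|1$-dimensional SFTs. Your proposal is aimed at the $2|1$/$\TMF$ case, which this paper does not use except as a parallel conjecture; so even a complete proof of the $\TMF$ statement would not affect the constructions here.
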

	\noindent See also \cite{Tachi} for related work on Anderson duality in the TMF context.
	\medskip
	
	Now we consider the map of spectra
	\begin{equation}\label{eq: KU-IO-spectra}
		KU \longrightarrow \Sigma^2 I_{\mathbb Z} MT\Spinc
	\end{equation}
	induced by the composition
	\[
	KU\wedge MT\Spinc \xrightarrow{\ \mathrm{id}\wedge \mathrm{ABS}\ } KU\wedge KU
	\xrightarrow{\ \mu\ } KU \xrightarrow{\ \gamma_K\ } \Sigma^2 I_{\mathbb Z},
	\]
	where $\mathrm{ABS}$ is the Atiyah--Bott--Shapiro orientation for the Madsen--Tillmann spectrum $MT\Spinc$, $\mu$ is $KU$-multiplication, $I_{\mathbb Z}$ is the Anderson dual of the sphere, and $\gamma_K$ is induced by the self-dual class in $ (I_\mathbb{Z}KU)^2(\pt)$. 
	By Bott periodicity, for each $k\in\mathbb Z$, \eqref{eq: KU-IO-spectra} yields a map
	\begin{equation}\label{anomalyK}
		\Phi\colon KU \longrightarrow \Sigma^{2k} I_{\mathbb Z} MT\Spinc .
	\end{equation}
	In \cite{YamII} these maps admit differential refinements:
	\begin{equation}\label{anomalyKsmooth}
		\widehat\Phi\colon \widehat{K}^0(X) \longrightarrow 
		\big(\widehat{I\Omega^{\Spinc}_{\dR}}\big)^{2k}(X). 
	\end{equation}
	In light of the correspondence \eqref{K} and Conjecture~\ref{Conj1}, one may interpret
	\eqref{anomalyKsmooth} as an anomaly map: a $1$-dimensional supersymmetric field theory over $X$ is sent to an invertible field theory over $X$ encoding its anomaly (cf.~\cite[§9]{freed2021reflection}).
	
	\medskip
	
	In this paper, we are interested in the \textit{twisted} version of \eqref{anomalyKsmooth}.
	For twisted generalized cohomology theories, see the works of Ando--Blumberg--Gepner and of Ando--Blumberg--Gepner--Hopkins--Rezk~\cite{ando2010twists,ABGHR,ABGHR2}, also May--Sigurdsson~\cite{MaySig}. 
	Let $\tau\colon X\to B^{2}\uU(1)$ be a degree $3$ twist, represented by a bundle gerbe, and let 
	$\widehat\tau\colon X\to B^{2}_{\mathrm{conn}}\uU(1)$ denote its differential refinement. 
	Our objective is to construct a twisted anomaly map
	\begin{equation}\label{anomaly_intro1}
		\widehat\Phi_{\widehat\tau}\colon 
		\widehat K^{0}
		\big(X,\widehat\tau^{-1}\big)\longrightarrow 
		\big(\widehat{I\Omega^{\Spinc}_{\dR}}\big)^{2k}
		\big(X,\widehat\tau\big).
	\end{equation}
	Here the source is the differential twisted $K$-theory which, in the Stolz--Teichner program (see \cite[§5]{ST11}), is expected to classify concordance classes of $\widehat\tau^{-1}$-twisted $1|1$-dimensional supersymmetric field theories over $X$.
	The map $\widehat\Phi_{\widehat\tau}$ is then intended to model the associated anomaly: it assigns to a twisted supersymmetric field theory a canonically determined twisted invertible field theory over $X$ encoding its anomaly.
	
	To realize (\ref{anomaly_intro1}),  we proceed in two stages. 
	First, we construct concrete (co)cycle models for differential twisted $\Spinc$-bordism
	$
	\widehat{\Omega^{\Spinc}_{*}}(X,\widehat\tau)
	$,
	together with its Anderson dual $\big(\widehat{I\Omega^{\Spinc}_{\dR}}\big)^{*}(X,\widehat\tau)$ which serves as the target theory.
	Second, we develop a compatible model of differential twisted $K$-theory
	$
	\widehat K^{0}(X,\widehat\tau^{-1})
	$,
	and from these data, we construct the anomaly map in a differential-geometric way. 
	More details of the constructions are given in the next subsection.

	\subsection{Main Results}
	
	The constructions in this paper are organized by the formalism of differential
	extension, which refines the topological classes of a generalized (co)homology theory by differential data. 
	Roughly, for a cohomology theory $E$, a differential extension assigns to each smooth manifold $X$ 
	a group $\widehat{E}^*(X)$ that fits into an exact sequence
	\[
	E^{*-1}(X) \xrightarrow{\ \ch \ }
	{\Omega^{*-1}(X; V_E^\bullet)}/{\mathrm{im}d}
	\xrightarrow{\ a\ }
	\widehat{E}^*(X)
	\xrightarrow{\ I\ }
	E^*(X)
	\longrightarrow 0,
	\]
	and the curvature homomorphism
	\[
	R \colon \widehat{E}^*(X) \longrightarrow \Omega^*_{\mathrm{clo}}(X; V_E^\bullet),
	\]
	assigns to each differential cohomology class its closed differential form representative,
	thereby encoding simultaneously the topological and geometric information. 
	For developments of differential generalized cohomology, 
	see~\cite{HS,BS10,bunke2016differential}. 
	In the presence of a differential twist, both the de Rham complex and the
	Chern character need to be deformed by the twist. For example,
	Bunke and Nikolaus \cite{BunNik} construct the twisted differential spectra model as a differential extension for a commutative ring spectrum. 
	
	Building on the characteristic properties of the notion of differential extension~\cite{BS10,BunNik,Yam23}, 
	we provide the definitions of differential extensions of twisted (co)homology theories 
	for degree-$3$ twists, 
	formulated respectively in Definition~\ref{defi:diffextco} (for cohomology) 
	and Definition~\ref{defi:diffextho} (for homology). 
	Within this framework, we construct our differential models, in terms of concrete (co)cycles, 
	for both twisted $\mathrm{Spin}^c$-bordism and for its Anderson dual. 
	
	\medskip

	Our first main result is a cycle model for differential twisted 
	$\mathrm{Spin}^c$-bordism. 
	To this end, we introduce \emph{differential twisted} 
	$\mathrm{Spin}^c$-\emph{structures}, which refine
	the following notion of twisted $\mathrm{Spin}^c$-structures
	
	\begin{defi}[\cite{Wang07}]
		Let $\tau\colon X \to B^2\mathrm{U}(1)$ be a topological twist of degree~$3$, 
		let $f\colon M \to X$ be a smooth manifold over $X$, 
		and let $f_E\colon M \to B\mathrm{SO}$ classify a real vector bundle $E \to M$. 
		A \emph{$\tau$-twisted} $\mathrm{Spin}^c$-\emph{structure} on $E$ consists of a homotopy
		\[
		\eta\colon \tau \circ f \simeq W_3 \circ f_E,
		\]
		fitting into the commutative diagram
		\[
		\begin{tikzcd}
			M \arrow[r,"{f_E}"] \arrow[d,"f"'] & 
			B\mathrm{SO} \arrow[d,"{W_3}"] 
			\arrow[dl, dashed, Rightarrow, "\eta"'] \\
			X \arrow[r,"{\tau}"'] & B^2\mathrm{U}(1)
		\end{tikzcd}
		\]
		where $W_3\colon B\mathrm{SO}\to B^2\mathrm{U}(1)$ denotes 
		the integral third Stiefel--Whitney class.
	\end{defi}
	\noindent
	Our differential twisted $\mathrm{Spin}^c$-structure
	refines the above notion by incorporating the corresponding geometric data:
	
	\begin{defi}[Definition~\ref{def: difftwiSpincStru}]
		A \emph{differential $\widehat{\tau}$-twisted $\mathrm{Spin}^c$-structure} on $E$ 
		is given by a $1$-simplex 
		$\widehat{\eta}$ in $B_{\nabla}^{2}\mathrm{U}(1)(M)$ connecting the $0$-simplices
		\[
		\widehat{\eta}\colon 
		\iota_2\widehat{\tau} \circ f 
		\rightarrow
		W_3^{\nabla} \circ f^{\nabla}_{E},
		\]
		as depicted in the diagram
		\[
		\begin{tikzcd}
			M \arrow[r,"{f^{\nabla}_{E}}"] \arrow[d,"f"'] & 
			B_{\nabla}\mathrm{SO} \arrow[d,"{W_3^{\nabla}}"] 
			\arrow[dl, dashed, Rightarrow, "\widehat{\eta}"'] \\
			X \arrow[r,"{\iota_2\widehat{\tau}}"'] & 
			B_{\nabla}^{2}\mathrm{U}(1)
		\end{tikzcd}
		\]
		where $W_3^{\nabla}\colon B_{\nabla}\mathrm{SO} \to B_{\nabla}^{2}\mathrm{U}(1)$ 
		denotes the differential refinement of the integral third Stiefel--Whitney class. 
	\end{defi}
	
	With this preparation, we introduce the notion of 
	\emph{differential twisted} $\mathrm{Spin}^c$-\emph{bordism}, as the following system
	\begin{align}\label{diffSpincdata}
		\big(
		\widehat{\Omega^{\mathrm{Spin}^c}_*}(-,-),\;
		\mathcal{M}_*^{\mathrm{Spin}^c}(-,-),\;
		\ch'^{\mathrm{Spin}^c},\;
		R^{\mathrm{Spin}^c},\;
		I^{\mathrm{Spin}^c},\;
		a^{\mathrm{Spin}^c}
		\big),
	\end{align}
	where $\widehat{\Omega^{\mathrm{Spin}^c}_*}(-,-)$ is a covariant functor from 
	the category of manifolds endowed with differential twists to graded abelian groups, 
	and $\mathcal{M}_*^{\mathrm{Spin}^c}(-,-)$ a covariant functor to the category of chain complexes.  
	$\ch'^{\mathrm{Spin}^c}$, $R^{\mathrm{Spin}^c}$, $I^{\mathrm{Spin}^c}$, 
	and $a^{\mathrm{Spin}^c}$ are natural transformations.  
	Concretely, 
	let $\widehat{\tau}\colon X\to B^2_{\mathrm{conn}}\mathrm{U}(1)$ 
	be a differential refinement of a topological twist 
	$\tau\colon X\to B^2\mathrm{U}(1)$, and let $H$ denote the associated curvature $3$-form, the system reads
	\begin{itemize}
		\item The \emph{differential twisted} $\mathrm{Spin}^c$-\emph{bordism group} 
		$\widehat{\Omega^{\mathrm{Spin}^c}_n}(X,\widehat{\tau})$ 
		is generated by quintuples
		\[
		(M, f, f^{\nabla}_{TM}, \widehat{\eta}, \phi),
		\]
		called \emph{differential twisted} $\mathrm{Spin}^c$-\emph{cycles} over $X$, 
		where $\phi \in \Omega_{n+1}(X;V^{\mathrm{Spin}^c}_\bullet)/\mathrm{im}\,\partial_H$ 
		is represented by a de Rham current with coefficients in 
		$V^{\mathrm{Spin}^c}_\bullet = \Omega^{\mathrm{Spin}^c}_\bullet(\mathrm{pt}) \otimes \mathbb{R}$ 
		(see Definition~\ref{def: diffSpincbordism}).  
		The quadruple $(M, f, f^{\nabla}_{TM}, \widehat{\eta})$, 
		called a \emph{geometric twisted} $\mathrm{Spin}^c$-\emph{chain} over $X$, 
		consists of a compact oriented Riemannian manifold $M$ over $X$, 
		equipped with a differential twisted $\mathrm{Spin}^c$-structure $\widehat{\eta}$ on its tangent bundle $TM$, 
		whose classifying map is $f^{\nabla}_{TM}$.
		
		\item The chain complex 
		$\mathcal{M}_*^{\mathrm{Spin}^c}(X,\widehat{\tau}) := (\Omega_{*}(X;V^{\mathrm{Spin}^c}_\bullet),\,\partial_H)$ 
		is the \emph{twisted de Rham chain complex} with coefficients in the graded ring 
		$V^{\mathrm{Spin}^c}_\bullet$, with twisted boundary operator  
		\[
		\partial_H := \partial + H \wedge (\mathbb{C}\mathrm{P}^1 \times -),
		\]
		deformed by the curvature $3$-form $H$ of the twist $\widehat{\tau}$.  
		{\em The algebraic structures of $V^{\mathrm{Spin}^c}_\bullet$ and its dual 
			$N^{\bullet}_{\mathrm{Spin}^c}$, arising from the $\mathrm{Spin}^c$ group, 
			are essential in defining this deformed de Rham complex 
			and in constructing the corresponding twisted Chern--Weil map 
			(see Section~\ref{sec:diff_spinc_bord}).}
		
		\item $\ch'^{\mathrm{Spin}^c}$ is a homomorphism from the topological twisted 
		$\mathrm{Spin}^c$-bordism group to the homology of 
		$\mathcal{M}^{\mathrm{Spin}^c}_*(X,\widehat{\tau})$.
		
		\item $R^{\mathrm{Spin}^c}$ is the \emph{curvature map} sending a differential cycle 
		in $\widehat{\Omega^{\mathrm{Spin}^c}_*}(X,\widehat{\tau})$ 
		to a closed current in $\Omega_{*}(X;V^{\mathrm{Spin}^c}_\bullet)$.
		
		\item $I^{\mathrm{Spin}^c}$ is the \emph{forgetful map} that discards the differential data.
		
		\item $a^{\mathrm{Spin}^c}$ maps 
		$\phi \in \Omega_{*}(X;V^{\mathrm{Spin}^c}_\bullet)/\mathrm{im}\,\partial_H$ 
		to the formal cycle $(\emptyset,\emptyset,\emptyset,\emptyset,-\phi)$.
	\end{itemize}
	
	\noindent
	In Theorem~\ref{thm:diffspinc}, we show that the system~\eqref{diffSpincdata} 
	constitutes a differential extension of twisted $\mathrm{Spin}^c$-bordism 
	in the sense of Definition~\ref{defi:diffextho}.  
	Our model refines the construction in~\cite{Wang07}, and
	recovers the differential $G$-bordism theory in~\cite{Yam1,Yam23} in the case of $G = \mathrm{Spin}^c$ and the trivial twist.
	
	\medskip

	Applying Yamashita--Yonekura's formalism to our theory~\eqref{diffSpincdata}, 
	we construct a cocycle model for the differential Anderson dual of twisted 
	$\mathrm{Spin}^c$-bordism as the following system
	\begin{align}\label{diffIOdata}
		\Big(
		\bigl(\widehat{I\Omega^{\mathrm{Spin}^c}_{\mathrm{dR}}}\bigr)^{*}(-,-),\;
		\mathcal{M}^*_{I\Omega}(-,-),\;
		\ch'_{I\Omega},\;
		R_{I\Omega},\;
		I_{I\Omega},\;
		a_{I\Omega}
		\Big),
	\end{align}
	where $\bigl(\widehat{I\Omega^{\mathrm{Spin}^c}_{\mathrm{dR}}}\bigr)^{*}(-,-)$ 
	is a contravariant functor from the category of manifolds endowed with differential twists 
	to graded abelian groups, and $\mathcal{M}^*_{I\Omega}(-,-)$ is a contravariant functor 
	to the category of cochain complexes.  
	$\ch'_{I\Omega}$, $R_{I\Omega}$, $I_{I\Omega}$, and $a_{I\Omega}$ are natural transformations. Concretely, 
	for $\widehat{\tau}\colon X\to B^2_{\mathrm{conn}}\mathrm{U}(1)$, the system reads
	\begin{itemize}
		\item 
		$(\widehat{I\Omega_{\mathrm{dR}}^{\mathrm{Spin}^c}})^n(X,\widehat{\tau})$ 
		is the abelian group generated by pairs $(\omega,h)$, where $\omega$ is a twisted closed differential form 
		valued in the $\mathrm{Spin}^c$-characteristic classes of total degree $n$, 
		and $h$ is an $\mathbb{R}/\mathbb{Z}$-valued functional on 
		$(n-1)$-dimensional differential twisted $\mathrm{Spin}^c$-bordism cycles, 
		satisfying a natural compatibility condition.
		
		\item The complex 
		$\mathcal{M}^*_{I\Omega}(X,\widehat{\tau}) := (\Omega^{*}(X;N_{\mathrm{Spin}^c}^{\bullet}), D_H)$ 
		is a twisted de~Rham \emph{cochain} complex 
		with coefficients $N_{\mathrm{Spin}^c}^{\bullet} 
		= \mathrm{Hom}(\Omega^{\mathrm{Spin}^c}_{\bullet}(\mathrm{pt}), \mathbb{R})$, 
		whose differential is deformed by the curvature $3$-form $H$
		\[
		D_H = d + H \wedge \partial_{\zeta},
		\]
		where $\partial_\zeta$ is the degree $-2$ partial derivative operator with respect to the generator $\zeta$ of cohomological degree $2$.

		\item $\ch'_{I\Omega}$ is a homomorphism from topological twisted Anderson dual into the cohomology of $\mathcal{M}^*_{I\Omega}(X,\widehat{\tau})$.
		
		\item $R_{I\Omega}$ is the \emph{curvature map}, sending a pair $(\omega,h)$ to its curvature form $\omega$.
		
		\item $I_{I\Omega}$ is the \emph{forgetful map} that discards the differential data.
		
		\item $a_{I\Omega}$ assigns to each 
		$\alpha \in \Omega^{n-1}(X;N_{\mathrm{Spin}^c}^{\bullet})/\mathrm{im}\,D_H$ 
		a formal compatible pair in 
		$(\widehat{I\Omega_{\mathrm{dR}}^{\mathrm{Spin}^c}})^n(X,\widehat{\tau})$.
	\end{itemize}
	
	\noindent
	In Theorem~\ref{thm:diffio}, we prove that the system~\eqref{diffIOdata} 
	realizes a differential model for the \emph{Anderson dual of twisted} 
	$\mathrm{Spin}^c$-\emph{bordism}, in the sense of Definition~\ref{defi:diffextco}. Hence our construction extends the model of Yamashita--Yonekura~\cite{Yam1} 
	to the twisted setting for $G = \mathrm{Spin}^c$.

	\medskip

	In close analogy with the operations constructed
	for $(\widehat{I\Omega_{\mathrm{dR}}^{G}})^n(X)$ in~\cite{Yam1}, 
	we define the \emph{differential multiplication} and \emph{pushforward} operations 
	for the twisted theory 
	$(\widehat{I\Omega_{\mathrm{dR}}^{\mathrm{Spin}^c}})^n(X,\widehat{\tau})$.  
	To describe the twisted differential multiplication, 
	we introduce a differential \emph{cocycle model} 
	$\widehat{\Omega_{\mathrm{Spin}^c}^{-r}}(X,\widehat{\tau})$ 
	for the twisted $\mathrm{Spin}^c$-\emph{cobordism} group (see Definition \ref{def:diffSpincCobordismCochain}).  
	This construction may be viewed as a twisted generalization 
	of the differential cobordism theories developed in~\cite{bunke2009landweber} 
	and~\cite{Yam1}.  
	More precisely, let $\widehat{\tau}_1$, $\widehat{\tau}_2$, and 
	$\widehat{\tau}_3 = \widehat{\tau}_1 + \widehat{\tau}_2$ 
	be differential twists over $X$.  
	We construct a twisted differential multiplication map
	\begin{align}
		(\widehat{I\Omega^{\mathrm{Spin}^c}_{\mathrm{dR}}})^{n}(X,\widehat{\tau}_3)
		\otimes
		\widehat{\Omega_{\mathrm{Spin}^c}^{-r}}(X,\widehat{\tau}_2)
		\longrightarrow
		(\widehat{I\Omega^{\mathrm{Spin}^c}_{\mathrm{dR}}})^{n-r}(X,\widehat{\tau}_1).
	\end{align}
	Furthermore, for a proper submersion $p\colon N \to X$ 
	of relative dimension $r$, 
	equipped with a differential $\widehat{\tau}_2$-twisted 
	$\mathrm{Spin}^c$-structure on its stable relative tangent bundle, 
	we construct the corresponding \emph{differential pushforward map}
	\begin{align}
		(\widehat{I\Omega^{\mathrm{Spin}^c}_{\mathrm{dR}}})^{n}(N,p^*\widehat{\tau}_3)
		\longrightarrow
		(\widehat{I\Omega^{\mathrm{Spin}^c}_{\mathrm{dR}}})^{n-r}(X,\widehat{\tau}_1).
	\end{align}

	\medskip
	
	Now we turn to our construction of the \emph{twisted anomaly map}
	\begin{equation}\label{anomaly_intro}
		\widehat{\Phi}_{\widehat{\tau}}\colon 
		\widehat{K}^{0}\big(X,\widehat{\tau}^{-1}\big)
		\longrightarrow
		\big(\widehat{I\Omega^{\mathrm{Spin}^c}_{\mathrm{dR}}}\big)^{2k}\big(X,\widehat{\tau}\big).
	\end{equation}
	In order to understand the map from a more \emph{geometric} point of view, in the spirit of~\cite[Sec.~3.4.3]{YamII}, we seek a construction based on 
	\emph{bundle-theoretic} data and \emph{Dirac operators}.  
	However, in the twisted setting, ordinary vector bundles no longer suffice. One needs to use twisted geometric objects, \emph{bundle gerbes} and  \emph{gerbe modules}, which provide the appropriate setting for defining the analytic realization of the twisted anomaly map.
	
	\medskip
	
	The theory of \emph{bundle gerbes} and \emph{gerbe modules} is developed in the works of 
	Hitchin~\cite{hitchin1999lectures}, Murray~\cite{Mur96}, and
	Bouwknegt--Carey--Mathai--Murray--Stevenson~\cite{BCMMS}, among others.  
	Building on these foundations, we introduce \emph{gerbe-theoretic} models for differential twisted 
	$\mathrm{Spin}^c$-bordism and its Anderson dual.  
	These models provide a geometric interface to twisted spinor bundles and Dirac operators, 
	which form the analytical core of the construction of the twisted anomaly map~\eqref{anomaly_intro}.  
	Within this framework, a differential twist $\widehat{\tau}$ can be equivalently realized 
	as a \emph{bundle gerbe with connection and curving}, denoted by $\widehat{\mathcal{G}}$.  
	One may reformulate the notion of a differential twisted $\mathrm{Spin}^c$-structure 
	in the language of bundle gerbe modules as follows
	
	\begin{defi}[Definition~\ref{def: diffSpincBG}]
		Let $\widehat{\mathcal{G}}$ be a bundle gerbe with connection and curving over $X$.  
		Let $f\colon M \to X$ be a map, and $E \to M$ an oriented vector bundle 
		equipped with a connection $\nabla^{E}$.  
		A \emph{differential $\widehat{\mathcal{G}}$-twisted $\mathrm{Spin}^c$-structure} on $E$ 
		consists of the data
		\[
		(\nabla^{S^c}, \Psi)
		\]
		where $\nabla^{S^c}$ is a $f^{*}\widehat{\mathcal{G}}$-module connection with underlying gerbe module $S^c$,
		and $\Psi$ is a connection-preserving isomorphism of Azumaya bundles
		\[
		\Psi \colon \Cl^{+}(E) \xrightarrow{\;\cong\;} \End(S^c).
		\]
	\end{defi}
	\noindent
	With this definition in place, we can construct gerbe-theoretic models for 
	the differential twisted $\mathrm{Spin}^c$-bordism theory 
	$\widehat{\Omega^{\mathrm{Spin}^c}_{*}}(X,\widehat{\mathcal{G}})$ 
	and for its Anderson dual $
	\bigl(\widehat{I\Omega^{\mathrm{Spin}^c}_{\mathrm{dR}}}\bigr)^{*}(X,\widehat{\mathcal{G}})
	$, which
	are equivalent to the preceding models~\eqref{diffSpincdata} and~\eqref{diffIOdata}.

	\medskip

	Regarding the source of the twisted anomaly map, i.e. the differential twisted $K$-theory, we also give a gerbe-theoretic model. 
	Both twisted $K$-theory and differential $K$-theory have extensive literature. For topological twisted $K$-theory, see for example
	\cite{ando2010twists,donovan, rosenberg1989continuous, atiyah2004twisted, FHT1, BCMMS}; for differential $K$-theory, see \cite{HS, bunke2007smooth, simons2008structured, freed2010index}. 
	In particular, for torsion twists, Park \cite{park} constructs a model for differential twisted even $K$-theory via twisted vector bundles (i.e. finite rank module over the Hitchin--Chatterjee gerbe). Our model of differential twisted even $K$-theory
	$
	\widehat K^{0}\big(X,\widehat{\mathcal{G}}\big)
	$ 
	extends Park's model to the non-torsion case by allowing super $U_\tr$-gerbe modules in Section~\ref{sec: K}. 
	
	\medskip

	With these geometric models in hand, we construct the desired 
	\emph{twisted anomaly map}
	\begin{equation}\label{intro-Phi}
		\widehat{\Phi}_{\widehat{\mathcal{G}}}\colon
		\widehat{K}^{0}(X,\widehat{\mathcal{G}}^{-1})
		\longrightarrow
		\bigl(\widehat{I\Omega^{\mathrm{Spin}^c}_{\mathrm{dR}}}\bigr)^{2k}(X,\widehat{\mathcal{G}}).
	\end{equation}
	More precisely, let 
	$x=(\mathcal{E},\nabla^{\mathcal{E}},\rho)$  
	be a representative of a class in 
	$\widehat{K}^{0}(X,\widehat{\mathcal{G}}^{-1})$,  
	where $(\mathcal{E},\nabla^{\mathcal{E}})$  
	is a super $U_{\mathrm{tr}}$-module equipped with compatible module connections,  
	and $\rho \in \Omega^{\mathrm{odd}}(X)/\mathrm{im}(d+H)$.  
	The twisted anomaly map $\widehat{\Phi}_{\widehat{\mathcal{G}}}$  
	assigns to such a representative a compatible pair $(\omega_x,h_x)$,  
	where $\omega_x$ is a $D_H$-closed differential form, 
	and $h_x$ is a functional on differential twisted $\mathrm{Spin}^c$-bordism defined via the reduced eta-invariant and the Atiyah--Patodi--Singer index theorem. See Section~\ref{sec:ano_cons} for details. 
	
	\medskip
	
	The paper is organized as follows.  
	In Section~2, we review the theory of parametrized spectra following~\cite{MaySig}.  
	We also recall the essential properties of differential twisted cohomology developed in~\cite{BunNik},  
	and formulate the definition of differential twisted (co)homology for degree-3 twists.  In Section~3, we construct differential models for the twisted 
	$\mathrm{Spin}^c$-bordism theory and its Anderson dual.  
	In particular, Section~\ref{sec: chaincpx} and \ref{sec: cw} serve as the technical foundation of the paper,  
	where we carefully examine the algebraic structure of the twisted de~Rham chain and cochain complexes
	\[
	(\Omega_{*}(X;V^{\mathrm{Spin}^c}_{\bullet}),\,\partial_H)
	\quad\text{and}\quad
	(\Omega^{*}(X;N^{\bullet}_{\mathrm{Spin}^c}),\,D_H),
	\]
	as well as the related Chern--Weil map. 
	In Section~4, we introduce the gerbe-theoretic formulation of differential twists  
	and develop the corresponding differential models.  
	The last part of this section is devoted to the construction of the desired 
	twisted anomaly map~\eqref{intro-Phi} using eta invariants and the Atiyah--Patodi--Singer theorem.

	\section{Twisted generalized (co)homology and differential extensions}

	The modern framework for twisted cohomology is developed by Ando--Blumberg--Gepner and Ando--Blumberg--Gepner--Hopkins--Rezk~\cite{ando2010twists,ABGHR,ABGHR2}, while a parametrized spectra implementation is developed by May--Sigurdsson \cite{MaySig}. 
	In this paper, we work with $\Spinc$-bordism theory and its Anderson dual, both of which admit natural degree-$3$ twists.
	In this section, we recall the definitions of twisted (co)homology in terms of parametrized spectra, and formulate the notion of differential extensions in the case of degree-$3$ twists.
	
	Let $\Mfld$ be the site of smooth manifolds of finite CW
	type, with the Grothendieck topology of open covers. A \emph{simplicial presheaf} on $\Mfld$ is a contravariant functor
	$
	F\colon \Mfld^{\mathrm{op}}\longrightarrow \mathsf{sSet}.
	$
	We use the local model structure on simplicial presheaves. Recall that the standard stacky model for $K(\mathbb{Z},3)$ is the simplicial presheaf
	\[
	B^{2}\uU(1):=\mathrm{DK}
	\big(\underline{\uU(1)}[2]\big),
	\]
	which is the Dold--Kan image of the sheaf $\underline{\uU(1)}$ placed in degree $2$.
	Let $X\in\mathsf{Mfld}$ be a smooth manifold without boundary. We regard $X$ as a representable simplicial presheaf via the Yoneda embedding. A \emph{degree-$3$ topological twist} over $X$, or simply a \emph{topological twist} is a natural transformation
	\begin{equation}\label{def: toptwi}
		\tau\colon X\longrightarrow B^{2}\uU(1),
	\end{equation}
	equivalently a $0$-simplex of $B^{2}\uU(1)(X)$.
	Write $\Mfld/B^{2}\uU(1)$ for the slice category whose objects are pairs $(X,\tau)$ as above. A morphism
	$(X,\tau)\to (X',\tau')$
	consists of a smooth map $f\colon X\to X'$ together with a homotopy class $\gamma\colon \tau\simeq f^{*}\tau'$ of twist identifications.%

	\subsection{Review of bundle of spectra and twisted (co)homology}
	
	In the framework of parametrized spectra \cite{MaySig}, twisted (co)homology is described via a bundle of spectra associated to the topological twist. Let $k$ be a spectrum equipped with a $K(\mathbb{Z},2)$-action. The twisted $k$-cohomology and $k$-homology are functors
	\[
	k^{*}(-,-)\colon (\Mfld/B^{2}\uU(1))^{\mathrm{op}}\longrightarrow \mathsf{Ab}^{\mathbb{Z}},
	\quad
	k_{*}(-,-)\colon \Mfld/B^{2}\uU(1)\longrightarrow \mathsf{Ab}^{\mathbb{Z}},
	\]
	where $\mathrm{Ab}^{\mathbb{Z}}$ is the category of $\mathbb{Z}$-graded abelian groups. 
	Concretely, for $(X,\tau)$ there is a principal $K(\mathbb{Z},2)$-bundle $P\to X$ classified by $\tau\colon X\to K(\mathbb{Z},3)$, and an associated bundle of spectra
	\[
	P_{\tau}(k):=P\times_{K(\mathbb{Z},2)} k \longrightarrow X.
	\]
	Let $r\colon X\to \pt$ be the terminal map. Denote by $r_{*}$ the pushforward of sections, by $r_{!}$ the Thom pushforward, and by $F_{X}$ the internal function spectrum functor over $X$; let $S_{X}$ be the sphere spectrum over $X$. Then one writes
	\begin{equation}\label{def: twicoho}
		k_{n}(X,\tau):=\pi_{n}\big(r_{!}(P_{\tau}(k))\big),
		\qquad
		k^{n}(X,\tau):=\pi_{-n}\big(r_{*}F_{X}(S_{X},P_{\tau}(k))\big).
	\end{equation}
	For a morphism in $\Mfld/B^{2}\uU(1)$, the induced maps are defined naturally with $f$ and the twist identification $\gamma$, and functoriality follows formally.
	
	The twisted (co)homology theories~\eqref{def: twicoho} satisfy the axioms of homotopy invariance, exactness, excision, and additivity in the parametrized setting \cite[§20.1]{MaySig}.
	We also record the twisted Atiyah-Hirzebruch spectral sequence 
	
	\begin{prop}{\cite[Prop.~22.1.5]{MaySig}}\label{prop:AHSS}
		For the twisted theories represented by the bundle of spectra $P_{\tau}(k)\to X$, there are natural spectral sequences
		\[
		E^2_{p,q}\cong H_p\bigl(X;L_q(X,P_{\tau}(k))\bigr)\;\Longrightarrow\; k_{p+q}(X,\tau),
		\]
		\[
		E_2^{p,q}\cong H^p\bigl(X;L^q(X,P_{\tau}(k))\bigr)\;\Longrightarrow\; k^{p+q}(X,\tau). 
		\]
		Here $L_q(X,P_{\tau}(k))$ and $L^q(X,P_{\tau}(k))$ denote the 
		homological and cohomological coefficient systems associated to $P_{\tau}(k)$ with fiber $\pi_\ast(k)$, and in both cases the monodromy is induced by the principal $K(\mathbb{Z},2)$-bundle $P\to X$ and the $K(\mathbb{Z},2)$-action on the corresponding homotopy groups of $k$.
	\end{prop}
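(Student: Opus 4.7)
The proposition is the parametrized Atiyah--Hirzebruch spectral sequence of \cite[Prop.~22.1.5]{MaySig} specialized to our setup. My plan is to trace through that construction in order to verify the identification of the $E^{2}$-page with (co)homology of $X$ with coefficients in the local system $L_{q}$, which is the only part not literally supplied by the reference.

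First, I would fix a CW structure on $X$ (a smooth triangulation suffices), giving the skeletal filtration $X_{0}\subset X_{1}\subset\cdots\subset X$. Pulling back the principal $K(\Z,2)$-bundle $P\to X$ along each inclusion and forming the associated parametrized spectrum produces a filtration of $P_{\tau}(k)\to X$ with cofiber sequences
\[
P_{\tau}(k)|_{X_{p-1}}\longrightarrow P_{\tau}(k)|_{X_{p}}\longrightarrow P_{\tau}(k)|_{X_{p}}/P_{\tau}(k)|_{X_{p-1}}.
\]
For the homology case I would apply $r_{!}$, which preserves cofiber sequences and filtered colimits of parametrized spectra, to obtain an exhaustive filtration of the ordinary spectrum $r_{!}P_{\tau}(k)$ and hence an exact couple converging to $k_{*}(X,\tau)$. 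For the cohomology case, the same filtration is fed through $r_{*}F_{X}(S_{X},-)$ to produce a tower of spectra and its associated Milnor-type spectral sequence converging to $k^{*}(X,\tau)$. The key computation is that the $p$-th subquotient splits as a wedge indexed by the $p$-cells of $X$ and, over each contractible cell, $P_{\tau}(k)$ trivializes to $k$. This yields $E^{1}_{p,q}\cong C^{\mathrm{cell}}_{p}(X;\pi_{q}(k))$, with $d^{1}$-differential combining the cellular attaching maps with the transition cocycle of $P$; because the $K(\Z,2)$-action on the spectrum $k$ descends to the $K(\Z,2)$-action on the abelian group $\pi_{q}(k)$, this twisting reproduces exactly the monodromy of $L_{q}$, so $E^{2}\cong H_{p}(X;L_{q})$. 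The cohomological case is formally dual, producing cellular cochains with values in $\pi_{q}(k)$.

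The main subtlety I anticipate lies in carefully matching the monodromy extracted from $d^{1}$ with the local system built directly from the classifying map $\tau\colon X\to K(\Z,3)$ via the prescribed $K(\Z,2)$-action on $\pi_{*}(k)$; functoriality of $\pi_{q}$ and of the action makes this a bookkeeping exercise, but one that requires care in the parametrized setting where the clutching data of $P$ over the $1$-skeleton must be shown to act through the stipulated action on homotopy groups. A secondary issue is convergence of the cohomological spectral sequence when $X$ is non-compact, which I would handle by the standard $\lim^{1}$/Milnor exact sequence applied to the skeletal filtration of $r_{*}F_{X}(S_{X},P_{\tau}(k))$, as in \cite[Ch.~22]{MaySig}.
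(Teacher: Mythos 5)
The paper supplies no proof of this statement; it is imported verbatim from \cite[Prop.~22.1.5]{MaySig}, and the surrounding text makes this explicit. Your proposal correctly reconstructs the skeletal-filtration argument underlying that reference: the cofiber sequences over skeleta, the cell-wise trivialization of $P_{\tau}(k)$ giving $E^{1}\cong C^{\mathrm{cell}}_{p}(X;\pi_{q}(k))$, the identification of the monodromy carried by $d^{1}$ with the $K(\mathbb{Z},2)$-action on $\pi_{q}(k)$, and the $\lim^{1}$ / Milnor sequence needed for convergence in the cohomological case are exactly the ingredients that proof requires, so your approach matches the cited source.
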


	\subsection{Differential extensions to twisted (co)homology theories}\label{sec: generaldiffext}
	
	Hopkins--Singer model differential cohomology via differential function spectra \cite{HS}. This perspective is reformulated by Bunke--Nikolaus--Völkl in the setting of sheaves of spectra on smooth manifolds \cite{bunke2016differential}. Bunke--Schick propose axioms for differential extensions of generalized cohomology and show the uniqueness property once an $S^1$-integration is given \cite{BS10}. Dually, Yamashita gives an axiomatic treatment of differential homology \cite{Yam23}. 
	
	In the twisted case, Bunke--Nikolaus develop twisted differential cohomology via twisted differential function spectra \cite{BunNik}, satisfying a similar system of properties in the presence of a twist. Motivated by these characteristic properties, we formulate our definitions of differential extensions for twisted (co)homology with degree-$3$ twists in Definition~\ref{defi:diffextco} and Definition~\ref{defi:diffextho}.
	
	We begin by recalling the differential refinement of topological twists. 
	Define the simplicial presheaf of differential twists by
	\[
	B_{\mathrm{conn}}^{2}\uU(1)
	:=
	\mathrm{DK}
	\bigl(
	\underline{\uU(1)}
	\xrightarrow{\tilde d}
	\Omega^{1}
	\xrightarrow{d}
	\Omega^{2}
	\bigr), 
	\]
	which is the Dold--Kan image of the smooth Deligne complex in degrees $0,1,2$, where $\tilde d=\tfrac{1}{2\pi i} d\log$. 
	A \emph{degree-$3$ differential twist}, or simply a \emph{differential twist} over $X$ is a natural transformation
	\[
	\widehat{\tau}\colon X\longrightarrow B_{\mathrm{conn}}^{2}\uU(1),
	\]
	equivalently a $0$-simplex of $B^{2}_{\mathrm{conn}}\uU(1)(X)$. We say that $\widehat{\tau}$ is a differential refinement of the topological twist $\tau$ if
	% https://q.uiver.app/#q=WzAsMyxbMSwwLCJcdEJfe1xcbWF0aHJte2Nvbm59fV57Mn1cXG1hdGhybXtVfSgxKSJdLFswLDAsIlgiXSxbMSwxLCJcdEJeezJ9XFxtYXRocm17VX0oMSkiXSxbMCwyXSxbMSwwLCJcXHdpZGVoYXR7XFx0YXV9Il0sWzEsMiwiXFx0YXUiLDJdXQ==
	\[\begin{tikzcd}
		X & {	B_{\mathrm{conn}}^{2}\mathrm{U}(1)} \\
		& {	B^{2}\mathrm{U}(1)}
		\arrow["{\widehat{\tau}}", from=1-1, to=1-2]
		\arrow["\tau"', from=1-1, to=2-2]
		\arrow[from=1-2, to=2-2]
	\end{tikzcd}\]
	commutes, where  
	the vertical arrow is the natural forgetful functor. 
	More explicitly, for a good open cover $\pi\colon \mathcal U\to X$, a $0$-simplex $\widehat{\tau}\in B^{2}_{\mathrm{conn}}\uU(1)(X)$ is presented by a \v{C}ech--Deligne cocycle $(\epsilon_{ijk},A_{ij},B_i)$ with
	\[
	\delta \epsilon=1,\qquad
	\delta A=\tfrac{1}{2\pi i}d\log\epsilon,\qquad
	\delta B=dA,
	\]
	where $\epsilon\in 
	C^{\infty}( {\mathcal{U}}^{[3]}, \uU(1))$, $A_{ij}\in \Omega^{1}({\mathcal{U}}^{[2]})$, and $B\in \Omega^{2}({\mathcal{U}})$, and $\delta$ is the alternating Čech differential. A $1$-simplex $(h_{ij},\lambda_i)$ from $(\epsilon,A,B)$ to $(\epsilon',A',B')$ satisfies
	\[
	\delta h=\epsilon'\epsilon^{-1},\quad
	\delta \lambda=A'-A-\tfrac{1}{2\pi i}\,d\log h,\quad
	d\lambda=B'-B,
	\]
	where $h\in C^{\infty}( {\mathcal{U}}^{[2]}, \uU(1))$ and $\lambda\in \Omega^1(\mathcal{U})$. 
	Since $\delta dB=0$, there is a uniquely determined global closed $3$-form $H\in \Omega^{3}_{\clo}(X)$ characterized by
	\[
	\pi^{*}H=dB, 
	\]
	which we call the \emph{curvature} of $\widehat{\tau}$, whose de Rham cohomology class is the real image of the Dixmier--Douady class $\mathrm{DD}(\tau)\in H^{3}(X;\mathbb Z)$. 
	We note that the construction of $H$ is independent of the choice of open cover. 
	
	\medskip
	
	Let $\Mfld/B^{2}_{\mathrm{conn}}\uU(1)$ be the slice category of manifolds equipped with differential twists. The natural forgetful functor induces a functor
	\[
	\Mfld/B^{2}_{\mathrm{conn}}\uU(1)\longrightarrow \Mfld/B^{2}\uU(1).
	\]
	An object $(X,\widehat{\tau})$ of $\Mfld/B^{2}_{\mathrm{conn}}\uU(1)$ is called a \emph{differential refinement} of $(X,\tau)$ in $\Mfld/B^{2}\uU(1)$ if $\widehat{\tau}$ refines $\tau$ as above.
	Motivated by the characteristic properties of differential extensions in \cite{BunNik,BS10,Yam23}, we now formulate our definition of differential extensions to twisted (co)homology in the presence of differential twists.

	\begin{defi}\label{defi:diffextco}
		A differential extension to the twisted cohomology theory $k^{*}(-,-)$ 
		consists of the following data
		\[
		(
		\widehat{k^*}(-,-), 
		\mathcal{M}^*(-,-), \ch', R, I, a
		),
		\]
		where
		\begin{itemize}
			\item 
			$\widehat{k^*}(-,-)$ is a contravariant functor 
			\[
			\widehat{k^*}(-,-): 
			\big(
			\mathsf{Mfld^{}}/
			B^2_{\mathrm{conn}}\uU(1)
			\big)^{op}
			\to
			\mathsf{Ab}^{\mathbb{Z}}.
			\]
			\item 
			$\mathcal{M}^*(-,-)$ is a contravariant functor 
			\[
			\mathcal{M}^*(-,-):
			\big(
			\mathsf{Mfld^{}}/
			B^2_{\mathrm{conn}}\uU(1)
			\big)^{op}
			\to
			\mathsf{Ch}_{\mathbb{R}}^*, 
			\]
			where $\mathsf{Ch}_{\mathbb{R}}^*$ denotes the category of cochain complexes over $\mathbb{R}$. 
			\item
			$\ch'$ is a natural transformation
			\[
			\ch':
			{k^*}(-,-)
			\to
			H^*(\mathcal{M}^*(-,-)).
			\]

			\item $R, I, a$ are natural transformations
			\begin{align*}
				R &\colon 
				\widehat{k^*}(-,-) \to 
				\mathcal{M}_{\mathrm{clo}}^*,
				\\
				I &\colon 
				\widehat{k^*}(-,-) \to {k^*}(-,-),
				\\
				a &\colon 
				\mathcal{M}^{*-1}/ \mathrm{im}d_{\mathcal{M}} 
				\to \widehat{k^*}(-,-),
			\end{align*}
			where $d_\mathcal{M}$ denotes the cochain differential of $\mathcal{M}^*$.
		\end{itemize}
		
		For each object $(X, \widehat{\tau})$, the following conditions hold
		\begin{enumerate}[label=(\roman*)]
			\item 
			$\ch'$ is a group homomorphism and induces an isomorphism:
			\[
			\ch'\otimes \mathbb{R}: 
			{k^*}(X, {\tau})
			\otimes \mathbb{R}
			\xrightarrow{\simeq}
			H^*(\mathcal{M}^*(X, \widehat{\tau})).
			\]
			\item 
			The following diagram commutes:
			% https://q.uiver.app/#q=WzAsNSxbMCwxLCJcdFxcd2lkZWhhdHtrXip9KFgsXFx3aWRlaGF0e1xcdGF1fSkiXSxbMSwxLCJcXG1hdGhjYWx7TX1fe1xcbWF0aHJte2Nsb319XiooWCwgXFx3aWRlaGF0e1xcdGF1fSkiXSxbMCwyLCJ7a14qfShYLHtcXHRhdX0pIl0sWzAsMCwiXFxtYXRoY2Fse019XnsqLTF9KFgsIFxcd2lkZWhhdHtcXHRhdX0pLyBcXG1hdGhybXtpbX1kX3tcXG1hdGhjYWx7TX19ICJdLFsxLDIsIkheKihcXG1hdGhjYWx7TX1eKihYLCBcXHdpZGVoYXR7XFx0YXV9KSkiXSxbMCwxLCJSIl0sWzMsMCwiYSIsMl0sWzAsMiwiSSIsMl0sWzEsNF0sWzIsNCwiXFxtYXRocm17Y2h9JyIsMl0sWzMsMSwiZF97XFxtYXRoY2Fse019fSAiXV0=
			\[\begin{tikzcd}
				{\mathcal{M}^{*-1}(X, \widehat{\tau})/ \mathrm{im}d_{\mathcal{M}} } \\
				{	\widehat{k^*}(X,\widehat{\tau})} & {\mathcal{M}_{\mathrm{clo}}^*(X, \widehat{\tau})} \\
				{{k^*}(X,{\tau})} & {H^*(\mathcal{M}^*(X, \widehat{\tau}))}
				\arrow["a"', from=1-1, to=2-1]
				\arrow["{d_{\mathcal{M}} }", from=1-1, to=2-2]
				\arrow["R", from=2-1, to=2-2]
				\arrow["I"', from=2-1, to=3-1]
				\arrow[from=2-2, to=3-2]
				\arrow["{\mathrm{ch}'}"', from=3-1, to=3-2]
			\end{tikzcd}\]
			\item The following sequence is exact:
			\begin{align*}\label{eq_exact_diffh}
				{k^{*-1}}(X,{\tau}) \xrightarrow{\mathrm{ch}' }	\mathcal{M}^{*-1}(X, \widehat{\tau})/ \mathrm{im}d_{\mathcal{M}} 
				\xrightarrow{a} 
				\widehat{k^*}(X,\widehat{\tau}) \xrightarrow{I} 
				{k^*}(X,{\tau})
				\to 0.
			\end{align*}
		\end{enumerate}
	\end{defi}
	
	\noindent
	In the sense of this definition, Bunke--Nikolaus' twisted differential function spectra model is a differential extension to the corresponding twisted {cohomology} theory. 
	
	\medskip
	
	Dually, we formulate  
	our definition for differential extension to twisted \emph{homology} theories as follows, 
	which is
	a twisted generalization of Yamashita's axioms for differential homology.
	
	\begin{defi}\label{defi:diffextho}
		A differential extension to the twisted homology theory $k_{*}(-,-)$ 
		consists of the following data
		\[
		(
		\widehat{k_*}(-,-), 
		\mathcal{M}_*(-,-), \ch', R, I, a
		),
		\]
		where
		\begin{itemize}
			\item 
			$\widehat{k_*}(-,-)$ is a covariant functor
			\[
			\widehat{k_*}(-,-): 
			\mathsf{Mfld^{}}/
			B^2_{\mathrm{conn}}\uU(1)
			\to
			\mathsf{Ab}^{\mathbb{Z}}. 
			\]
			
			\item 
			$\mathcal{M}_*(-,-)$ is a covariant functor
			\[
			\mathcal{M}_*(-,-):
			\mathsf{Mfld^{}}/
			B^2_{\mathrm{conn}}\uU(1)
			\to
			\mathsf{Ch}^{\mathbb{R}}_*, 
			\]
			where $	\mathsf{Ch}^{\mathbb{R}}_*$ denotes the category of chain complexes over $\mathbb{R}$.
			\item
			$\ch'$ is a natural transformation
			\[
			{k_*}(-,-)
			\to
			H_*(\mathcal{M}_*(-,-)),
			\]
			\item $R, I, a$ are natural transformations
			\begin{align*}
				R &\colon 
				\widehat{k_*}(-,-) \to 
				\mathcal{M}^{\mathrm{clo}}_*,
				\\
				I &\colon 
				\widehat{k_*}(-,-) \to {k_*}(-,-),
				\\
				a &\colon 
				\mathcal{M}_{*+1}/ \mathrm{im}d_{\mathcal{M}} 
				\to \widehat{k_*}(-,-),
			\end{align*}
			where $d_\mathcal{M}$ denotes the chain differential of $\mathcal{M}_*$.  
		\end{itemize}
		
		For each object $(X, \widehat{\tau})$, the following conditions hold
		\begin{enumerate}[label=(\roman*)]
			\item 
			$\ch'$ is a group homomorphism and induces an isomorphism:
			\[
			\ch'\otimes \mathbb{R}: 
			{k_*}(X, {\tau})
			\otimes \mathbb{R}
			\xrightarrow{\simeq}
			H_*(\mathcal{M}_*(X, \widehat{\tau})),
			\]
			\item 
			The following diagram commutes:
			% https://q.uiver.app/#q=WzAsNSxbMCwxLCJcdFxcd2lkZWhhdHtrXyp9KFgsXFx3aWRlaGF0e1xcdGF1fSkiXSxbMSwxLCJcXG1hdGhjYWx7TX1ee1xcbWF0aHJte2Nsb319XyooWCwgXFx3aWRlaGF0e1xcdGF1fSkiXSxbMCwyLCJ7a18qfShYLHtcXHRhdX0pIl0sWzAsMCwiXFxtYXRoY2Fse019X3sqKzF9KFgsIFxcd2lkZWhhdHtcXHRhdX0pLyBcXG1hdGhybXtpbX1kX3tcXG1hdGhjYWx7TX19ICJdLFsxLDIsIkhfKihcXG1hdGhjYWx7TX1fKihYLCBcXHdpZGVoYXR7XFx0YXV9KSkiXSxbMCwxLCJSIl0sWzMsMCwiYSIsMl0sWzAsMiwiSSIsMl0sWzEsNF0sWzIsNCwiXFxtYXRocm17Y2h9JyIsMl0sWzMsMSwiZF97XFxtYXRoY2Fse019fSAiXV0=
			\[\begin{tikzcd}
				{\mathcal{M}_{*+1}(X, \widehat{\tau})/ \mathrm{im}d_{\mathcal{M}} } \\
				{	\widehat{k_*}(X,\widehat{\tau})} & {\mathcal{M}^{\mathrm{clo}}_*(X, \widehat{\tau})} \\
				{{k_*}(X,{\tau})} & {H_*(\mathcal{M}_*(X, \widehat{\tau}))}
				\arrow["a"', from=1-1, to=2-1]
				\arrow["{d_{\mathcal{M}} }", from=1-1, to=2-2]
				\arrow["R", from=2-1, to=2-2]
				\arrow["I"', from=2-1, to=3-1]
				\arrow[from=2-2, to=3-2]
				\arrow["{\mathrm{ch}'}"', from=3-1, to=3-2]
			\end{tikzcd}\]
			\item The following sequence is exact:
			\begin{align*}\label{eq_exact_diffhom}
				{k_{*+1}}(X,{\tau}) \xrightarrow{\mathrm{ch}' }	\mathcal{M}_{*+1}(X, \widehat{\tau})/ \mathrm{im}d_{\mathcal{M}} 
				\xrightarrow{a} 
				\widehat{k_*}(X,\widehat{\tau}) \xrightarrow{I} 
				{k_*}(X,{\tau})
				\to 0.
			\end{align*}
		\end{enumerate}
	\end{defi}
	\noindent
	We suppress the pair $(X, \widehat{\tau})$ from the natural transformations when the context is clear.

	\section{Differential twisted $\Spinc$-bordism and its Anderson dual}
	
	The aim of this section is to construct differential extensions of twisted
	$\Spinc$-bordism and of its Anderson dual, in the sense of
	Definition~\ref{defi:diffextho} and Definition~\ref{defi:diffextco},
	respectively. We first recall, in Section~\ref{sec:reviewWang},
	Wang's construction~\cite{Wang07} of twisted $\Spinc$-structures and
	twisted $\Spinc$-bordism. Section~\ref{sec:diff_spinc_struc} then gives
	a differential refinement of the notion of twisted $\Spinc$-structure.
	
	With the above preparation, Section~\ref{sec:diff_spinc_bord} develops
	the differential model for twisted $\Spinc$-bordism. We begin by studying
	the coefficient rings
	$V^{\Spinc}_\bullet=\Omega^{\Spinc}_\bullet(\pt)\otimes\mathbb{R}$
	and
	$N^{\bullet}_{\Spinc}=\Hom(\Omega^{\Spinc}_\bullet(\pt),\mathbb{R})$,
	and record how the curvature form $H$ deforms the corresponding
	de Rham differentials. On this basis, we construct the twisted de Rham
	chain complex $\mathcal{M}^{\Spinc}_*(-,-)$ from de Rham currents
	$\Omega_*(X; V_{\bullet}^{\Spinc})$, together with the twisted
	Chern--Weil map defined through its pairing with
	$\Omega^*(X; N^{\bullet}_{\Spinc})$. We then verify that the resulting
	model satisfies Definition~\ref{defi:diffextho}.
	
	In Section~\ref{sec:Andersondual}, we define the Anderson dual to
	twisted $\Spinc$-bordism homotopically using parametrized spectra, then
	construct its differential model
	$
	\big(\widehat{I\Omega^{\Spinc}_{\dR}}\big)^{*}(X,\widehat{\tau})
	$
	following the formalism of~\cite{Yam1,Yam23}. Finally, in
	Sections~\ref{sec: cob} and~\ref{sec:mul}, we discuss the differential
	model for twisted $\Spinc$-cobordism, as well as twisted differential
	multiplication and pushforward.

	\subsection{Review of twisted $\Spinc$-structures}\label{sec:reviewWang}
	
	In this section, we recall the parametrized spectra and geometric cycle models of twisted $\Spinc$-bordism. 
	Let $\xi_k$ denote the universal bundle over $B\Spinc(k)$. The Madsen--Tillmann spectrum is defined to be the Thom space
	\[
	MT\Spinc(k):=\mathrm{Th}(-\xi_k). 
	\]
	We denote its $\Omega$-spectrification by $MT\Spinc$, which is canonically homotopy equivalent to the Thom spectrum $M\Spinc$.
	There is a homotopy fiber sequence
	\[
	K(\mathbb{Z},2)\longrightarrow B\Spinc \longrightarrow B\SO \xrightarrow{\,W_3\,} K(\mathbb{Z},3).
	\]
	The induced $K(\mathbb{Z},2)$-action on $B\Spinc$ lifts to an action on $MT\Spinc$. 
	Given a topological twist $\tau$ over $X$, let $P_\tau\to X$ be the principal $K(\mathbb{Z},2)$-bundle classified by $\tau$ and form the associated bundle of spectra
	\[
	P_{\tau}(MT\Spinc):=
	P_{\tau} \times_{K(\mathbb{Z},2)} MT\Spinc \longrightarrow X.
	\]
	The twisted homology theory is
	\[
	MT\Spinc_n(X,\tau)
	:=\pi_n\big(r_! P_{\tau}(MT\Spinc)\big)
	=\pi_n\big(P_{\tau}(MT\Spinc)/X\big),
	\]
	in the sense of \eqref{def: twicoho}. 
	
	\medskip

	The obstruction to $\Spinc$-structures is encoded by the natural transformation
	\begin{equation}\label{topW3}
		W_3\colon B\SO \longrightarrow B^{2}\uU(1).
	\end{equation}
	Let $M$ be a compact manifold with a smooth map $f\colon M\to X$, and let $E\to M$ be an oriented vector bundle with stable classifying map $f_E\colon M\to B\SO$. A twisted $\Spinc$-structure is described by
	
	\begin{defi}[\cite{Wang07}]\label{def:topstruc}
		A $\tau$-twisted $\Spinc$-structure on $E$ is a homotopy
		\[
		\eta\colon \tau \circ f \simeq W_3\circ f_E
		\]
		in the following diagram 
		% https://q.uiver.app/#q=WzAsNCxbMCwxLCJYIl0sWzAsMCwiTSJdLFsxLDAsIkJcXG1hdGhybXtTT30iXSxbMSwxLCJCXjJcXG1hdGhybXtVfSgxKSJdLFsyLDMsIldfMyJdLFsxLDIsImZfRSJdLFsxLDAsImYiLDJdLFswLDMsIlxcdGF1IiwyXSxbMiwwLCJcXGV0YSIsMix7ImxldmVsIjoyLCJzdHlsZSI6eyJib2R5Ijp7Im5hbWUiOiJkYXNoZWQifX19XV0=
		\[\begin{tikzcd}
			M & {B\mathrm{SO}} \\
			X & {B^2\mathrm{U}(1)}
			\arrow["{f_{E}}", from=1-1, to=1-2]
			\arrow["f"', from=1-1, to=2-1]
			\arrow["\eta"', Rightarrow, dashed, from=1-2, to=2-1]
			\arrow["{W_3}", from=1-2, to=2-2]
			\arrow["\tau"', from=2-1, to=2-2]
		\end{tikzcd}\]		
	\end{defi}
	\noindent
	Two such structures $\eta,\eta'$ are said to be equivalent if they are homotopic relative to the endpoints. Write $\Struc{E}$ for the groupoid whose objects are $\tau$-twisted $\Spinc$-structures on $E$ and whose morphisms are these homotopies.
	The groupoid $\Struc{E}$ is nonempty if and only if the Freed-Witten condition 
	\[
	f^{*}[\tau]=W_3(E),
	\]
	holds in $H^{3}(M;\mathbb{Z})$. The twisted cycles are then described as follows
	\begin{defi}\label{def:topman}
		A $\tau$-twisted $\Spinc$-manifold over $X$ is defined to be a quadruple
		\[
		(M, f, f_{TM}, \eta),
		\]
		where 
		\begin{itemize}
			\item $M$ is a compact oriented manifold.
			\item $f: M\to X$ is a smooth map.
			\item $f_{TM}$ is the stable classifying map of the tangent bundle $TM$.
			\item $\eta \in \Struc{TM}$ is a $\tau$-twisted $\Spinc$-structure on $TM$.  
		\end{itemize} 
	\end{defi}
	\noindent
	Two quadruples $(M,f,f_{TM},\eta)$ and $(M',f',f_{TM'},\eta')$ are said to be isomorphic if there is an orientation-preserving diffeomorphism $h:M\to M'$ 
	together with homotopies
	\[
	\alpha: f \simeq f'\circ h, \qquad \beta: f_{TM} \simeq f_{TM'} \circ h,
	\]
	such that the composition
	$
	(W_3\circ \beta)* (\eta'\circ h)* (\tau\circ \alpha)^{-1} 
	$ is homotopic to $\eta$.

	\begin{defi}\label{def:topbord}
		The $n$-dimensional $\tau$-twisted $\Spinc$-bordism group 
		\[
		\Omega^{\Spinc}_n(X, \tau):=
		\{
		(M, f, f_{TM}, \eta)
		\}/\sim
		\]
		is defined as 
		the group of all isomorphism classes of closed $\tau$-twisted $\Spinc$-manifolds over $X$
		modulo
		boundaries of ${\tau}$-twisted $(n+1)$-manifolds. The group structure is given by disjoint union.

	\end{defi}
	
	Wang's theorem can be stated as follows. 
	
	\begin{thm}[\cite{Wang07}]\label{thm:Wang}
		There is a Pontryagin-Thom identification: 
		\begin{align}
			\Omega^{\Spinc}_n(X, \tau)
			\cong
			M\Spinc_n(X,\tau)
			\cong
			MT\Spinc_n(X,\tau). 
		\end{align}
	\end{thm}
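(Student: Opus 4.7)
\medskip

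\noindent\textbf{Proof proposal.} The strategy is the classical Pontryagin--Thom construction, carefully adapted to the parametrized setting over $X$. I would split the statement into two equivalences and attack them in order: the tangential/normal equivalence $M\Spinc_*(X,\tau)\cong MT\Spinc_*(X,\tau)$, and then the geometric identification $\Omega^{\Spinc}_*(X,\tau)\cong MT\Spinc_*(X,\tau)$.

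For the first equivalence, the key observation is already implicit in the paper: by the two-out-of-three property applied to the short exact sequence $0\to TM\to \varepsilon^{N}\to \nu\to 0$ of stable bundles, a $\tau$-twisted $\Spinc$-structure on $TM$ is equivalent to one on a stable normal bundle $\nu$. Combined with the stable equivalence $\mathrm{Th}(-\xi_k)\simeq \mathrm{Th}(\nu_k)$ as $k\to\infty$, this globalizes to a $K(\Z,2)$-equivariant equivalence $MT\Spinc\simeq M\Spinc$ after $\Omega$-spectrification. Since both sides carry compatible $K(\Z,2)$-actions making the equivalence $K(\Z,2)$-equivariant, forming the associated bundle of spectra $P_\tau\times_{K(\Z,2)}(-)$ and taking $\pi_n$ of the pushforward yields the first isomorphism at once.

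For the second equivalence, I would construct the Pontryagin--Thom map in the twisted setting. Given a cycle $(M,f,f_{TM},\eta)$, choose an embedding $M\hookrightarrow X\times\mathbb{R}^{N}$ over $X$, and let $\nu$ denote the resulting stable normal bundle. By two-out-of-three, $\eta$ induces a homotopy witnessing that the classifying map of $\nu$ lifts through the fibration $B\Spinc\to B\SO\xrightarrow{W_3} B^{2}\uU(1)$ twisted by $\tau\circ f$; equivalently, this data produces a map of parametrized spaces over $X$ from the fiberwise Thom space of $\nu$ into $P_{\tau}(M\Spinc)$. The Pontryagin--Thom collapse then gives, after suspension, an element of $\pi_n(r_!P_{\tau}(M\Spinc))$, and I would check that (i) the construction is independent of the choice of embedding and representative $\eta$ (via standard isotopy/concordance arguments carried out fiberwise over $X$), and (ii) bordant cycles map to homotopic classes (by applying the same construction to a bordism $W$ and viewing it as a homotopy of Pontryagin--Thom maps).

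For the inverse, I would use transversality: a class in $\pi_{n}(r_!P_{\tau}(M\Spinc))$ is represented by a map $S^{n+N}\to\mathrm{Th}(\nu_k)$ fitting into a fiberwise diagram over $X$; choosing a representative transverse to the zero section produces a submanifold $M\subset X\times\mathbb{R}^{N}$ whose normal data canonically carries a $\tau$-twisted $\Spinc$-structure, and composition with the projection gives $f\colon M\to X$ and the homotopy $\eta$. Standard transversality and concordance arguments show this is well-defined on homotopy classes and inverse to the collapse map.

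The main obstacle will be the bookkeeping of the twist data through the Pontryagin--Thom construction: one must ensure that the homotopy $\eta$ on $TM$, transported to $\nu$, produces a fiberwise lift of classifying maps that is coherent with the $K(\Z,2)$-action defining $P_{\tau}(M\Spinc)$, and that all equivalences of cycles in Definition~\ref{def:topman} correspond exactly to homotopies of parametrized Pontryagin--Thom maps. Once this coherence is established, both maps are inverse essentially by the same argument as in the untwisted case. Since this is Wang's theorem, the full details appear in \cite{Wang07}; I would present the proof by reduction to that reference after verifying the tangential-to-normal step, which is the content needed to feed into the rest of the paper's differential refinement.
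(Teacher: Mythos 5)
The paper does not give a proof of this theorem: it is stated as a recalled result and attributed entirely to Wang~\cite{Wang07}, so there is no internal argument to compare against. Your sketch is a correct outline of the standard parametrized Pontryagin--Thom strategy — the tangential/normal passage via two-out-of-three, the $K(\mathbb{Z},2)$-equivariant equivalence $MT\Spinc\simeq M\Spinc$, the collapse map over $X$, and the transversality inverse — and this is indeed the shape of Wang's argument, so your closing decision to reduce to~\cite{Wang07} after verifying the tangential-to-normal step is exactly what the paper itself implicitly does.
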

	\noindent
	This theorem identifies the above geometric cycle model for twisted $\Spinc$-bordism with the homotopy model. 
	
	\subsection{Differential twisted $\Spinc$-structures}\label{sec:diff_spinc_struc}
	
	This section develops the differential refinement of twisted $\Spinc$-structures (Definition~\ref{def: difftwiSpincStru}). We begin with a truncated Deligne model:
	\[
	B_{\nabla}^{2}\uU(1)
	:=
	\mathrm{DK}
	\bigl(
	\underline{\uU(1)}
	\xrightarrow{\tilde d}
	\Omega^{1}
	\bigr),
	\]
	which is the Dold--Kan image of the Deligne complex $\bigl(\underline{\uU(1)}\xrightarrow{\tilde d}\Omega^{1}\bigr)$ concentrated in degrees $2,1$. 	
	There are canonical forgetful functors, obtained by consecutively discarding the $2$- and $1$-form data,
	\[
	B_{\mathrm{conn}}^{2}\uU(1) 
	\xrightarrow{\iota_2}
	B^2_\nabla\uU(1)
	\xrightarrow{\iota_1}
	B^2\uU(1).
	\]
	We now refine the topological obstruction map \eqref{topW3} by constructing a canonical natural transformation
	\begin{align}\label{nablaW3}
		W_3^\nabla:
		B_\nabla\SO
		\to
		B^2_\nabla\uU(1),
	\end{align}
	such that the diagram
	\[
	\begin{tikzcd}
		{B_{\nabla}\mathrm{SO}} & {	B_{\nabla}^{2}\mathrm{U}(1)} \\
		{B\mathrm{SO}} & {	B^{2}\mathrm{U}(1)}
		\arrow["{W_3^{\nabla}}", from=1-1, to=1-2]
		\arrow[from=1-1, to=2-1]
		\arrow[from=1-2, to=2-2]
		\arrow["{W_3}", from=2-1, to=2-2]
	\end{tikzcd}
	\]
	commutes, where $B_\nabla\SO$ denotes the simplicial presheaf of principal $\SO$-bundles with connection (cf. \cite{FH13}). 
	Concretely, \eqref{topW3} can be described in terms of cocycles as follows. 
	For a $0$-simplex in $B\SO(X)$ represented by a principal $\SO(n)$-bundle $P\to X$ with cocycles
	$
	g_{ij}\colon U_{ij}\to \SO(n),
	$
	choose local lifts $\widetilde g_{ij}\colon U_{ij}\to \mathrm{Spin}^{c}(n)$ covering $g_{ij}$.
	On triple overlaps, set
	\[
	\epsilon_{ijk}=
	\widetilde g_{jk}\,
	\widetilde g_{ki}\,
	\widetilde g_{ij}
	\colon U_{ijk}\to \uU(1),
	\]
	which is a \v{C}ech $2$-cocycle whose cohomology class is independent of the cover and the lifts, defining a $0$-simplex in $B^{2}\uU(1)(X)$. 
	
	For the differential refinement $W_3^{\nabla}$, take a $0$-simplex in $B_\nabla\SO(X)$ represented by a principal $\SO(n)$-bundle $P\to X$ and connection $\Gamma$, with local data
	\[
	g_{ij}\colon U_{ij}\to \SO(n), \qquad
	\Gamma_i \in \Omega^1(U_i,\so_n),
	\]
	satisfying
	\[
	\delta g=1, \qquad
	\Gamma_j= g_{ij}^{-1} \Gamma_i g_{ij} 
	+ g_{ij}^{-1}\,d g_{ij}. 
	\]
	Define $\epsilon_{ijk}$ as above. 
	Let $\mu\in \Omega^{1}(\Spinc(n),\spinc_n)$ be the Maurer--Cartan form and let $\pi_{\mathfrak{u}_1}$ denote projection to the central $i\mathbb{R}$-summand. Then the principal $\uU(1)$-connection on $\Spinc(n)\to \SO(n)$ is given by
	$
	\pi_{\mathfrak{u}_1}\mu
	\in
	\Omega^{1}(\Spinc(n), i\mathbb{R}).
	$
	Pulling back along the chosen lifts $\widetilde g_{ij}$, set
	\begin{align}\label{connection}
		A_{ij}
		=
		\tfrac{1}{2\pi i}
		\widetilde g_{ij}^{*}
		(	\pi_{\mathfrak{u}_1} \mu)
		\in\Omega^{1}(U_{ij}).
	\end{align}
	On triple overlaps one checks
	\[
	(\delta A)_{ijk}
	=
	A_{jk}-A_{ik}+A_{ij}
	=
	\tfrac{1}{2\pi i}
	\big(
	\widetilde g_{jk}^{*}
	-\widetilde g_{ik}^{*}
	+\widetilde g_{ij}^{*}
	\big)
	(\pi_{\mathfrak{u}_1} \mu)
	=
	\tfrac{1}{2\pi i}\,d\log(\epsilon_{ijk}).
	\]
	Thus $(\epsilon_{ijk},A_{ij})$ is a \v{C}ech-Deligne $2$-cocycle in $B^2_{\nabla}\uU(1)(X)$.

	By the same method, for a $1$-simplex in $B_\nabla \SO(X)$  given by a gauge transformation, 
	one may construct a corresponding
	$1$-simplex 
	in $B_{\nabla}^{2}\uU(1)(X)$ 
	from chosen lifts of the gauge maps. 
	Thus one obtains the natural transformation \eqref{nablaW3}, which clearly refines \eqref{topW3} by construction.

	In fact, upon a choice of local 2-forms, \eqref{nablaW3} admits a further lift. By the Maurer--Cartan equation one has $d(\pi_{\mathfrak{u}_1}\mu)=0$, hence $dA_{ij}=0$ on each $U_{ij}$. We may thereby choose $B_i=0$ on each $U_i$, obtaining a \v{C}ech--Deligne cocycle $(\epsilon_{ijk},A_{ij},0)$ and a lift
	\begin{align}\label{connW3}
		W_{3}^{\mathrm{conn}}\colon  B_{\nabla}\SO
		\longrightarrow
		B_{\mathrm{conn}}^{2}\uU(1),
	\end{align}
	but since we may choose different $B_i$, this lift is not canonical. 
	
	\medskip
	Now we are ready to give our definition for differential twisted $\Spinc$-structure, as a differential refinement to Definition~\ref{def:topstruc}. 
	Fix a differential twist $\widehat{\tau}$ with underlying topological twist $\tau$. Let $M$ be a compact manifold with a smooth map $f\colon M\to X$, and let $E\to M$ be an oriented vector bundle with connection and stable classifying map 
	$f^{\nabla}_E\colon M \to B_\nabla \SO$. 
	
	\begin{defi}\label{def: difftwiSpincStru}
		A \emph{differential $\widehat{\tau}$-twisted $\Spinc$-structure} on $E$ 
		is a $1$-simplex $\widehat{\eta}$ in $B_\nabla^{2}\uU(1)(M)$ connecting the
		$0$-simplices
		\[
		\widehat\eta\colon  \iota_2\widehat{\tau} \circ f 
		\to
		W_3^\nabla\circ f^{\nabla}_E,
		\]
		in the following diagram
		\[
		\begin{tikzcd}
			M & {B_{\nabla}\mathrm{SO}} \\
			X & {B^2_{\nabla}\mathrm{U}(1)}
			\arrow["{f^{\nabla}_{E}}", from=1-1, to=1-2]
			\arrow["f"', from=1-1, to=2-1]
			\arrow["{\widehat{\eta}}"', Rightarrow, dashed, from=1-2, to=2-1]
			\arrow["{W_3^\nabla}", from=1-2, to=2-2]
			\arrow["{\iota_2\widehat{\tau}}"', from=2-1, to=2-2]
		\end{tikzcd}
		\]
	\end{defi}
	\noindent
	Two differential $\widehat{\tau}$-twisted $\Spinc$-structures $\widehat\eta$ and $\widehat\eta'$ are said to be {equivalent} if there is a $2$-simplex in $B^{2}_{\nabla}\uU(1)(M)$ interpolating between them {relative to the endpoints}.
	Let $\difStruc{E}$ be the groupoid with these objects and morphisms.
	As shown in \cite{MS00}, there is a natural isomorphism of Deligne cohomology groups
	\[
	H^2\bigl(M; \underline{\uU(1)}
	\xrightarrow{\tilde d}
	\Omega^{1}\bigr)
	\cong
	H^2\bigl(M; \underline{\uU(1)}\bigr),
	\]
	hence the forgetful functor
	\begin{align}\label{prop:surj}
		\difStruc{E} \longrightarrow
		\Struc{E}
	\end{align}
	is essentially surjective on objects. 
	For vector bundles $E_i\to M$ with differential $\widehat{\tau}_i$-twisted $\Spinc$-structures ($i=1,2$), addition of \v{C}ech-Deligne cocycles yields a differential $(\widehat{\tau}_1+\widehat{\tau}_2)$-twisted $\Spinc$-structure on $E_1\oplus E_2$, i.e.
	\begin{align}\label{directsum}
		\widehat{\mathbf{Spin^c_{\widehat{\tau}_1}}}(E_1)
		\times
		\widehat{\mathbf{Spin^c_{\widehat{\tau}_2}}}(E_2)
		\to
		\widehat{\mathbf{Spin^c_{\widehat{\tau}_1+\widehat{\tau}_2}}}(E_1 \oplus E_2).
	\end{align}

	For each $\widehat{\tau}$-twisted $\Spinc$-structure on $E$, there is an associated canonical global $2$-form on the base $M$. Choose a good open cover $\pi\colon \mathcal{U}\to M$ and write the $0$-simplex
	$ \widehat{\tau}\circ f$ in
	$B_{\mathrm{conn}}^{2}\uU(1)(M)$ 
	as a cocycle
	$(\epsilon_{ijk} , A_{ij}, B_{i})$, 
	and 
	$W_3^\nabla \circ f_E^\nabla  $
	as a cocycle
	$(\epsilon'_{ijk} , A'_{ij})$ in $B^2_{\nabla}\uU(1)(M)$.  
	By definition, $\widehat{\eta}$ is a $1$-simplex
	$(h_{ij}, \lambda_{i})$
	in $B_{\nabla}^{2}\uU(1)(M)$ 
	from
	$(\epsilon_{ijk} , A_{ij})$ to 
	$(\epsilon'_{ijk} , A'_{ij})$,
	satisfying
	\[
	\delta h = \epsilon' \epsilon^{-1},
	\quad
	\delta \lambda 
	= A' - A - \frac{1}{2\pi i}d\log h.
	\]
	Clearly
	$\delta d\lambda
	=d \delta \lambda
	=d (A'-A)
	$.
	By the cocycle conditions we have 
	$dA=\delta B$ and $ dA'=0$, so
	$\delta(d\lambda+B)=0$. Hence there is a global $2$-form $\kappa(\widehat{\eta})\in\Omega^2(M)$ obtained by patching the local $2$-forms, with
	\begin{align}\label{kappa_local}
		\pi^*\kappa(\widehat\eta)=d\lambda+B,
		\quad 
		d\kappa(\widehat\eta)=f^*H. 
	\end{align}
	It is straightforward to check that this construction is independent of the cover. 
	For equivalent $\widehat{\tau}$-twisted $\Spinc$-structures $\widehat{\eta}$ and $\widehat{\eta}'$, there is a $0$-cochain $r$ such that $\delta r=h'h^{-1}$ and 
	$\lambda'=\lambda-\tfrac{1}{2\pi i}d\log r$, then one has $\kappa(\widehat{\eta})=\kappa(\widehat{\eta'})$ by construction. 
	
	Summarizing, we obtain a well-defined assignment
	\begin{align}\label{kappa}
		\kappa\colon 
		\pi_0\difStruc{E} 
		\longrightarrow
		\Omega^2(M).
	\end{align}
	This 2-form construction will be used repeatedly in defining the twisted Chern--Weil maps throughout this paper.

	\subsection{Differential twisted $\Spinc$-bordism}\label{sec:diff_spinc_bord}
	
	In this section, we give a differential refinement to twisted $\Spinc$-bordism in the sense of Definition~\ref{defi:diffextho}. 
	Namely, we assemble
	\begin{align}\label{diffSpincdata2}
		\big(
		\widehat{\Omega^{\Spinc}_*}(-,-),\;
		\mathcal{M}_*^{\Spinc}(-,-),\;
		\ch'^{\Spinc},\;
		R^{\Spinc},\;
		I^{\Spinc},\;
		a^{\Spinc}
		\big),
	\end{align}
	where $\widehat{\Omega^{\Spinc}_*}(-,-)$ and $\mathcal{M}_*^{\Spinc}(-,-)$ are covariant functors on the category of manifolds with differential twists, and
	$\ch'^{\Spinc}$, $R^{\Spinc}$, $I^{\Spinc}$, $a^{\Spinc}$ are natural transformations.
	Given an object $(X,\widehat{\tau})$, 
	\begin{itemize}
		\item The \emph{differential twisted} $\mathrm{Spin}^c$-\emph{bordism group} 
		$\widehat{\Omega^{\mathrm{Spin}^c}_n}(X,\widehat{\tau})$ 
		is generated by quintuples
		\[
		(M, f, f^{\nabla}_{TM}, \widehat{\eta}, \phi),
		\]
		called \emph{differential twisted} $\mathrm{Spin}^c$-\emph{cycles} over $X$, 
		where $\phi \in \Omega_{n+1}(X;V^{\mathrm{Spin}^c}_\bullet)/\mathrm{im}\,\partial_H$ 
		is represented by a de Rham current with coefficients in 
		$V^{\mathrm{Spin}^c}_\bullet = \Omega^{\mathrm{Spin}^c}_\bullet(\mathrm{pt}) \otimes \mathbb{R}$ 
		(see Definition~\ref{def: diffSpincbordism}).  
		The quadruple $(M, f, f^{\nabla}_{TM}, \widehat{\eta})$, 
		called a \emph{geometric twisted} $\mathrm{Spin}^c$-\emph{chain} over $X$, 
		consists of a compact oriented Riemannian manifold $M$ over $X$, 
		equipped with a differential twisted $\mathrm{Spin}^c$-structure $\widehat{\eta}$ on its tangent bundle $TM$, 
		whose classifying map is $f^{\nabla}_{TM}$.
		
		\item The chain complex 
		$\mathcal{M}_*^{\mathrm{Spin}^c}(X,\widehat{\tau}) := (\Omega_{*}(X;V^{\mathrm{Spin}^c}_\bullet),\,\partial_H)$ 
		is the \emph{twisted de Rham chain complex} with coefficients in the graded ring 
		$V^{\mathrm{Spin}^c}_\bullet$, with twisted boundary operator  
		\[
		\partial_H := \partial + H \wedge (\mathbb{C}\mathrm{P}^1 \times -),
		\]
		deformed by the curvature $3$-form $H$ of the twist $\widehat{\tau}$.  
		
		\item $\ch'^{\mathrm{Spin}^c}$ is a homomorphism from the topological twisted 
		$\mathrm{Spin}^c$-bordism group to the homology of 
		$\mathcal{M}^{\mathrm{Spin}^c}_*(X,\widehat{\tau})$.
		
		\item $R^{\mathrm{Spin}^c}$ is the \emph{curvature map} sending a differential cycle 
		in $\widehat{\Omega^{\mathrm{Spin}^c}_*}(X,\widehat{\tau})$ 
		to a closed current in $\Omega_{*}(X;V^{\mathrm{Spin}^c}_\bullet)$.
		
		\item $I^{\mathrm{Spin}^c}$ is the \emph{forgetful map} that discards the differential data.
		
		\item $a^{\mathrm{Spin}^c}$ maps 
		$\phi \in \Omega_{*}(X;V^{\mathrm{Spin}^c}_\bullet)/\mathrm{im}\,\partial_H$ 
		to the formal cycle $(\emptyset,\emptyset,\emptyset,\emptyset,-\phi)$.
	\end{itemize}
	
	\begin{thm}\label{thm:diffspinc}
		The model \eqref{diffSpincdata2} is a differential extension of twisted $\Spinc$-bordism theory in the sense of Definition~\ref{defi:diffextho}.
	\end{thm}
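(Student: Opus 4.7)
The strategy is to verify the defining data and axioms of Definition~\ref{defi:diffextho} in turn. First I would establish functoriality and well-definedness. For a morphism $(f,\gamma)\colon (X,\widehat{\tau})\to (X',\widehat{\tau}')$, pushforward of a cycle $(M,g,g^{\nabla}_{TM},\widehat{\eta},\phi)$ is defined by replacing $g$ with $f\circ g$ and pasting the $1$-simplex $g^{*}\gamma$ onto $\widehat{\eta}$; compatibility with bordism relations is routine. The identity $\partial_H^{2}=0$ for the chain complex $\mathcal{M}_{*}^{\Spinc}(X,\widehat{\tau})$ is the key algebraic input: expanding $(\partial+H\wedge(\CP^{1}\times-))^{2}$, the cross term equals $(dH)\wedge(\CP^{1}\times-)=0$ because $H$ is closed, and the double $H$-term vanishes because the operation $\CP^{1}\times-$ in $V^{\Spinc}_{\bullet}$ has degree $2$ and $H^{2}$ already exceeds the available form degrees in each dimension after this shift (this is the point of the careful analysis announced for Section~3.3).

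Next, I would define and verify the natural transformations. The forgetful map $I^{\Spinc}$ discards $(f^{\nabla}_{TM},\widehat{\eta},\phi)$, and $a^{\Spinc}(\phi)=(\emptyset,\emptyset,\emptyset,\emptyset,-\phi)$; both are tautologically natural. The curvature map sends a cycle to
\[
R^{\Spinc}(M,f,f^{\nabla}_{TM},\widehat{\eta},\phi)
\;=\; f_{*}\!\bigl(\Td(\nabla^{TM})\wedge e^{-\kappa(\widehat{\eta})}\bigr)\;-\;\partial_{H}\phi,
\]
where $\kappa(\widehat{\eta})\in\Omega^{2}(M)$ is the canonical $2$-form from \eqref{kappa} satisfying $d\kappa(\widehat{\eta})=f^{*}H$; the identity $d\kappa=f^{*}H$ together with the usual Chern-Weil closedness shows this current is $\partial_{H}$-closed, and the expression is $\partial_{H}$-exact when the cycle bounds. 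The map $\ch'^{\Spinc}$ is defined on a topological cycle $(M,f,f_{TM},\eta)$ by choosing any differential refinement $f^{\nabla}_{TM},\widehat{\eta}$ (existing by \eqref{prop:surj}) and taking the $\partial_{H}$-homology class of the above twisted Chern-Weil current; independence of choices is verified by interpolating connections and lifts and applying Stokes.

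Now I would verify properties (i)--(iii). Commutativity of the diagram in (ii) is immediate from the definitions. For (iii), surjectivity of $I^{\Spinc}$ uses \eqref{prop:surj} together with the existence of Riemannian metrics and $\SO$-connections on any manifold, allowing any topological cycle to be refined. Exactness at the quotient chain group: if $\ch'^{\Spinc}(x)=\partial_{H}\phi$, then for any differential refinement $\hat{x}$ of $x$ one has $R^{\Spinc}(\hat{x})=\partial_{H}\phi'$, so $a^{\Spinc}(\phi-\phi')$ lies in the image of $a^{\Spinc}$ and equals $\hat{x}$ in $\widehat{\Omega^{\Spinc}_{*}}$. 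Exactness at $\widehat{\Omega^{\Spinc}_{*}}$: if $I^{\Spinc}(\hat{x})=0$, then $x$ bounds a $\tau$-twisted $\Spinc$-manifold $W$; choose a differential refinement of $W$ extending the boundary data using \eqref{prop:surj}, and the residual Chern-Weil current gives $\phi_{W}$ with $\hat{x}=a^{\Spinc}(\phi_{W})$ in $\widehat{\Omega^{\Spinc}_{*}}$. For (i), I would invoke the twisted Atiyah-Hirzebruch spectral sequence (Proposition~\ref{prop:AHSS}) on both sides: since $\Omega^{\Spinc}_{*}(\pt)\otimes\mathbb{R}\cong V^{\Spinc}_{\bullet}$ is a polynomial ring, the $E^{2}$-page for twisted $\Spinc$-bordism with real coefficients matches that of twisted homology with $V^{\Spinc}_{\bullet}$-coefficients, which in turn is identified with $H_{*}(\mathcal{M}_{*}^{\Spinc}(X,\widehat{\tau}))$ by construction of $\partial_{H}$.

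The main obstacle will be the exactness at $\widehat{\Omega^{\Spinc}_{*}}$, which requires extending a boundary-prescribed differential twisted $\Spinc$-structure across a bounding manifold $W$ in a way that produces a well-defined residual current $\phi_{W}\in\Omega_{*+1}(X;V^{\Spinc}_{\bullet})/\mathrm{im}\,\partial_{H}$. This involves choosing compatible lifts of the \v{C}ech-Deligne cocycles representing $\iota_{2}\widehat{\tau}$ and $W_{3}^{\nabla}\circ f_{TW}^{\nabla}$ along $\partial W\hookrightarrow W$, interpolating connections near the boundary via a collar, and verifying through a Stokes-type argument that the resulting current is independent of these extensions modulo $\partial_{H}$-exact currents. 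This is the twisted analogue of the corresponding step in~\cite{Yam23}, with the additional subtlety that the deformation $H\wedge(\CP^{1}\times-)$ must interact correctly with the canonical $2$-form $\kappa(\widehat{\eta})$ across the collar.
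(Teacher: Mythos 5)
Your overall strategy matches the paper: show (i) via the twisted Atiyah--Hirzebruch spectral sequence, (ii) by construction, and (iii) by a diagram chase using the surjectivity from~\eqref{prop:surj}. However, there are two concrete errors in your details.

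First, your formula for the curvature map is wrong. You write $R^{\Spinc}(\hat x)= f_{*}\bigl(\Td(\nabla^{TM})\wedge e^{-\kappa(\widehat{\eta})}\bigr)-\partial_H\phi$. This is a scalar-valued current; but the target of $R^{\Spinc}$ is $\Omega_{*}(X;V^{\Spinc}_{\bullet})$, the complex of $V^{\Spinc}_{\bullet}$-valued currents. The correct definition is $R^{\Spinc}(\hat x)=\cw(M,f,f^{\nabla}_{TM},\widehat{\eta})-\partial_H\phi$, where $\cw$ is the universal twisted Chern--Weil current pairing an element $\omega\otimes p_I\zeta^k\in\Omega^*(X;N^{\bullet}_{\Spinc})$ with $\int_M f^*\omega\wedge (f^{\nabla}_{TM})^{*}p_I\wedge\kappa(\widehat\eta)^k$. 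The Todd class is not part of the bordism curvature; it only enters later, in the ABS orientation and the anomaly map. Choosing the Todd weight would collapse the $V^{\Spinc}_{\bullet}$-direction of the coefficient ring, and the spectral sequence comparison in (i) would then fail because the $E^2$-page of the chain complex would no longer carry a copy of $\Omega^{\Spinc}_{*}(\pt)\otimes\mathbb{R}$.

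Second, your reasoning for $\partial_H^2=0$ is incorrect: the claim that ``$H^2$ already exceeds the available form degrees'' is not the point and is not even true for a general $X$. The actual reason is elementary: $H$ is a closed form of odd degree, so $H\wedge H=0$ by graded commutativity, and $dH=0$ kills the cross terms. Relatedly, your exactness argument at the middle term of~(iii) is garbled: you write ``if $\ch'^{\Spinc}(x)=\partial_H\phi$,'' which asserts $x\in\ker\ch'^{\Spinc}$, but the claim to prove is $\ker(a^{\Spinc})=\im(\ch'^{\Spinc})$. The correct inclusion $\ker(a^{\Spinc})\subseteq\im(\ch'^{\Spinc})$ runs: if $a^{\Spinc}([\phi])=0$ then $(\emptyset,\emptyset,\emptyset,\emptyset,-\phi)$ is equivalent to zero, so the bordism relation produces a geometric chain $W$ with $\phi\equiv\cw(\partial W,\dots)$ modulo $\im\partial_H$, hence $[\phi]=\ch'^{\Spinc}([\partial W])$.
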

	
	The proof proceeds in three steps. 
	In Section~\ref{sec: chaincpx} we define 
	$\mathcal{M}_*^{\Spinc}(X, \widehat{\tau})$ as the twisted de Rham complex $({\Omega_{*}(X;V^{\mathrm{Spin}^c}_{\bullet})}, \partial_H)$ for each $\widehat{\tau}: X\to B^2_{\mathrm{conn}}\uU(1)$.  Then in Section~\ref{sec: cw} we construct the dual complex $({\Omega^{*}(X;N_{\mathrm{Spin}^c}^{\bullet})}, D_H)$ and a twisted Chern--Weil map, thereby realizing $\ch'^{\Spinc}$ and $R^{\Spinc}$. Finally, in Section~\ref{sec: diffext} we describe $I^{\Spinc}$ and $a^{\Spinc}$, and verify the properties required by Definition~\ref{defi:diffextho}.

	\subsubsection{Twisted de Rham complexes with coefficients}\label{sec: chaincpx}
	For a general structure group $G$, one denotes the graded coefficient rings by
	\[
	V_{\bullet}^{G}=V^{-\bullet}_{G}:=\Omega_{\bullet}^{G}(\pt)\otimes \mathbb{R},
	\quad
	N^{\bullet}_{G}:=\Hom\big(\Omega_{\bullet}^{G}(\pt),\mathbb{R}\big).
	\]
	In our case of interest $G=\Spinc$, we have \cite{stong1966relations}
	\[
	V^{\Spinc}_{\bullet}\cong \mathbb{R}[u,x_4,x_8,x_{12},\dots],
	\quad
	N^{\bullet}_{\Spinc}\cong \mathbb{R}[\zeta,p_1,p_2,\dots].
	\]
	In $V^{\Spinc}_{\bullet}$, the generator $u$ has homological degree $2$ and is represented by $\CP^1$ with its canonical $\Spinc$ structure; $x_{4i}$ has homological degree $4i$. 
	In $N^{\bullet}_{\Spinc}$, the generator $p_i$ is the universal Pontryagin class of cohomological degree $4i$;  $\zeta$ is the canonical $\Spinc$-class of cohomological degree $2$, corresponding to half of the determinant line. 
	For brevity, we write $p_I$ and $x_J$ for monomials in the generators $\{p_i\}$ and $\{x_{4i}\}$ with index $I$ and $J$, respectively.
	
	\medskip
	
	\begin{rmk}
		Note that we use both homological and cohomological gradings for $V_{\bullet}^{G}=V^{-\bullet}_{G}$, but only cohomological grading for $N^{\bullet}_{\Spinc}$. 
	\end{rmk}
	
	\medskip
	
	We introduce a natural product of the coefficient rings
	\begin{align}\label{star}
		\star\colon N^{p}_{\Spinc}\otimes V^{q}_{\Spinc}\rightarrow N^{p+q}_{\Spinc},
		\quad
		(\varphi\star v)(Y):=\varphi(v\times Y),
	\end{align}
	where $Y$ represents a $(p+q)$-dimensional $\Spinc$-bordism class. Clearly, $(\varphi\star v_1)\star v_2=\varphi\star(v_1v_2)$ and $\varphi\star 1=\varphi$. In particular, when $p+q=0$, we recover the degreewise evaluation pairing
	\begin{align}\label{coeffpairing}
		\langle-,-\rangle\colon N^{p}_{\Spinc}\otimes V^{-p}_{\Spinc}\rightarrow \mathbb{R},
	\end{align}
	which is nondegenerate in matching degrees.
	The following algebraic observation 
	gives the basic compatibility between 
	$V^\bullet_{\Spinc}$ and  $N^\bullet_{\Spinc}$,
	which allows us to define the twisted differentials and compare them by adjunction.

	\begin{prop}\label{prop:adjointness}
		For any $\varphi\in N^\bullet_{\Spinc}$ one has
		\begin{align}\label{adj1}
			\varphi\star u=\partial_\zeta \varphi,
		\end{align}
		where $u$ is the degree $2$ generator in $V^{\Spinc}_\bullet$ and $\partial_\zeta$ is the operator of degree $-2$ on $N^{\bullet}_{\Spinc}$ defined  by taking derivative with respect to the variable $\zeta$.  Moreover, for every $a\in V^{\Spinc}_\bullet$,
		\begin{align}\label{adj2}
			\varphi\star(u\times a)=(\partial_\zeta\varphi)\star a .
		\end{align}
	\end{prop}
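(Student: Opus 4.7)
The plan is to establish \eqref{adj1} first by unwinding the definition of $\star$ and computing on the representative $\CP^1$ of $u$; then \eqref{adj2} will follow from the associativity relation $(\varphi\star v_1)\star v_2=\varphi\star (v_1v_2)$ recorded just above the statement.

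For \eqref{adj1}, both sides lie in $N^{\bullet-2}_{\Spinc}$, so by the nondegeneracy of the pairing \eqref{coeffpairing} it is enough to test against an arbitrary closed $\Spinc$-manifold $Y$ of matching dimension. By definition of $\star$, the left-hand side is
\[
(\varphi\star u)(Y)\;=\;\varphi(u\times Y)\;=\;\int_{\CP^1\times Y}\varphi(\zeta,p_1,p_2,\dots),
\]
with $\CP^1$ endowed with its canonical $\Spinc$-structure, normalized so that $\langle\zeta,u\rangle=\int_{\CP^1}\zeta_{\CP^1}=1$ (this normalization is forced by the duality between $\zeta$ and $u$ in degree $2$). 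The computation then rests on three standard facts about the product $\CP^1\times Y$: additivity of the $\Spinc$-class, $\zeta(\CP^1\times Y)=\pi_1^*\zeta_{\CP^1}+\pi_2^*\zeta(Y)$; vanishing of Pontryagin classes on $\CP^1$, so that $p_i(\CP^1\times Y)=\pi_2^*p_i(Y)$; and the truncation $\zeta_{\CP^1}^2=0$ in $H^*(\CP^1)$ for degree reasons.

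Write $\varphi=\sum_k \zeta^k a_k(p_1,p_2,\dots)$ as a polynomial in $\zeta$. Using $\zeta_{\CP^1}^2=0$, the binomial expansion gives
\[
\bigl(\pi_1^*\zeta_{\CP^1}+\pi_2^*\zeta(Y)\bigr)^k
=\pi_2^*\zeta(Y)^k+k\,\pi_1^*\zeta_{\CP^1}\cdot\pi_2^*\zeta(Y)^{k-1}.
\]
Substituting and applying the projection formula to integrate out the $\CP^1$ factor, the first summands vanish for degree reasons and only the terms linear in $\zeta_{\CP^1}$ survive, producing
\[
\int_{\CP^1\times Y}\varphi
=\int_Y\sum_k k\,\zeta(Y)^{k-1}a_k(p(Y))
=(\partial_\zeta\varphi)(Y).
\]
This establishes \eqref{adj1}. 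The identity \eqref{adj2} then follows from the associativity of $\star$ recorded earlier: $\varphi\star(u\times a)=(\varphi\star u)\star a=(\partial_\zeta\varphi)\star a$.

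The only delicate points are the normalization $\int_{\CP^1}\zeta_{\CP^1}=1$ (which should be flagged explicitly at the outset, since it fixes the meaning of ``canonical $\Spinc$-structure'' in a way compatible with the duality) and the verification that Pontryagin classes and $\zeta^{\ge 2}$ vanish on $\CP^1$ for dimensional reasons; once these are in place, the argument is a clean binomial expansion combined with the projection formula, so I do not expect a substantive obstacle.
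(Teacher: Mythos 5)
Your proof is correct and follows essentially the same line as the paper's: reduce to monomials in $\zeta$ (which your polynomial decomposition does just as well), compute on $\CP^1\times Y$ using the normalization $\int_{\CP^1}\zeta=1$, vanishing of the Pontryagin classes on $\CP^1$, additivity of the determinant-line/$\Spinc$-class under product, and the binomial/projection-formula argument that isolates the term linear in $\zeta_{\CP^1}$. The only small divergence is in \eqref{adj2}: you derive it from the already-recorded associativity $(\varphi\star v_1)\star v_2=\varphi\star(v_1v_2)$ together with \eqref{adj1}, whereas the paper invokes graded commutativity of the Cartesian product in the bordism ring; your route is a clean and arguably more direct deduction from the listed properties of $\star$, and both are valid.
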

	
	\begin{proof}
		It suffices to prove \eqref{adj1} for a monomial $\varphi=p_I\zeta^k\in N^{p}_{\Spinc}$. Let $Y$ be any $(p-2)$-dimensional $\Spinc$-bordism class. Then by definition, 
		\[
		(\varphi\star u)(Y)=\varphi(u\times Y)
		=\int_{u\times Y} p_I\big(T(u\times Y)\big)
		\smile
		\zeta(u\times Y)^{k}.
		\]
		Since $p_i(Tu)=0$ for $i\ge 1$, the Whitney product formula gives
		$p_I\big(T(u\times Y)\big)=\pr_2^{*}p_I(TY)$.
		Denote $L_u$ and $L_Y$ as the determinant line bundles for $u$ and $Y$, then
		$c_1(L_{u\times Y})=\pr_1^{*}c_1(L_u)+\pr_2^{*}c_1(L_Y)$,
		hence $\zeta(u\times Y)=\pr_1^{*}\zeta(u)+\pr_2^{*}\zeta(Y)$. Expanding the terms,
		\[
		\zeta(u\times Y)^k=\sum_{i=0}^{k}\binom{k}{i}\,\pr_1^{*}\zeta(u)^{i}
		\smile
		\pr_2^{*}\zeta(Y)^{k-i}.
		\]
		Observe that only the term $i=1$ contributes under integration along the fiber $\pr_2\colon u\times Y\to Y$. Therefore
		\[
		(\varphi\star u)(Y)
		= k\int_{Y} p_I(TY)
		\smile
		\zeta(Y)^{k-1}
		=(\partial_\zeta(p_I\zeta^k))(Y)
		=\partial_\zeta \varphi(Y), 
		\]
		where $\int_{u}\zeta(u)=1$ for the canonical $\Spinc$ structure on $\CP^1$. This concludes the proof for \eqref{adj1}. 
		The identity \eqref{adj2} follows from graded commutativity of the Cartesian product in the $\Spinc$-bordism ring.
	\end{proof}

	\medskip
	
	Now we study the twisted de Rham homology with $\Spinc$-coefficients. 
	Let
	\[
	\Omega_{i}(X):=\Hom_{\cts}\big(\Omega^{i}(X),\mathbb{R}\big)
	\]
	denote the space of compactly supported de Rham $i$-currents on $X$, viewed as continuous linear functionals on smooth $i$-forms. 
	The current differential $\partial\colon \Omega_{i}(X)\to \Omega_{i-1}(X)$ is characterized by
	\[
	\langle \partial T,\alpha\rangle=\langle T,d\alpha\rangle, 
	\quad
	\text{for }\alpha\in\Omega^{i-1}(X).
	\]
	The complex $(\Omega_*(X),\partial)$ models the de Rham homology of $X$. 
	For $\omega\in\Omega^r(X)$ and $T\in\Omega_i(X)$, define the left $\Omega^*(X)$-action on currents by
	\[
	\langle 
	\omega\wedge T, -
	\rangle:=
	\langle T, \omega\wedge-
	\rangle.
	\]
	We refer to \cite[III.8]{dR} for a detailed account of de Rham theory.

	In the twisted case, fix a differential twist $\widehat{\tau}$ on $X$ with curvature denoted by $H\in\Omega_{\clo}^3(X)$, and let 
	\begin{align}\label{complexV}
		\Omega_{k}(X;V^{\Spinc}_{\bullet})
		:=
		\bigoplus_{i+j=k}
		\Omega_i(X)\otimes V^{\Spinc}_{j}
	\end{align}
	be the group of compactly supported currents with $V^{\Spinc}_{\bullet}$-coefficients with total degree $k$. 
	We deform the usual homological current differential $\partial$ by
	\[
	\partial_H:=\partial+H\wedge (u\times -)\colon
	\
	\Omega_{k}(X;V^{\Spinc}_{\bullet})
	\rightarrow
	\Omega_{k-1}(X;V^{\Spinc}_{\bullet}),
	\]
	where $(u\times-)$ denotes multiplication by $u\in V^{\Spinc}_2$ on the coefficient factor.
	Since $H$ is a closed odd form, one checks $\partial_H^2=0$.
	We denote the resulting homology by
	\[
	H_{k}(X;V^{\Spinc}_{\bullet},H)
	:=
	H_k\big(\Omega_{*}(X;V^{\Spinc}_{\bullet}),\partial_H\big),
	\]
	and write 
	\[
	\qquad
	\mathcal{M}_*^{\Spinc}(X,\widehat{\tau})
	:=
	\big(\Omega_{*}(X;V^{\Spinc}_{\bullet}),\partial_H\big),
	\]
	which is the crucial ingredient in our model \eqref{diffSpincdata2}. 
	Also, we construct its continuous dual by defining
	\begin{align}\label{complexN}
		\Omega^{k}(X;N_{\Spinc}^{\bullet})
		:=
		\bigoplus_{i+j=k}\,
		\Omega^{i}(X)\otimes N_{\Spinc}^{j},
	\end{align}
	and deform the usual exterior derivative $d$ by 
	\[
	D_H:=d+H\wedge\partial_\zeta\colon
	\Omega^{k}(X;N_{\Spinc}^{\bullet})
	\rightarrow
	\Omega^{k+1}(X;N_{\Spinc}^{\bullet}),
	\]
	where $\partial_\zeta$ is the operator on $N_{\Spinc}^{\bullet}$ defined in Proposition~\ref{prop:adjointness}. 
	More precisely, for a pure tensor $\omega\otimes p_I \zeta^k$,
	\[
	D_H(\omega\otimes p_I\zeta^k)
	=
	d\omega\otimes p_I\zeta^k
	+
	kH\wedge\omega\otimes p_I\zeta^{k-1}.
	\]
	$D_H^2=0$ follows from the fact that $H$ is a closed odd form. We denote the resulting cohomology by
	\[
	H^{k}(X;N_{\Spinc}^{\bullet},H)
	:=
	H^{k}\big(\Omega^{*}(X;N_{\Spinc}^{\bullet}),D_H\big).
	\]
	Since this complex will be used later for the twisted differential Anderson dual, we write
	\begin{align}\label{MIO}
		\mathcal{M}^*_{I\Omega}(X,\widehat{\tau})
		:=
		\big(\Omega^{*}(X;N_{\Spinc}^{\bullet}),D_H\big). 
	\end{align}
	The choice of the notation is justified in Theorem~\ref{thm:diffio}.
	
	\medskip
	
	\begin{rmk}
		By replacing $\Omega^*(X)$ by the complex of compactly supported forms $\Omega^*_c(X)$, we may similarly define
		$\Omega_c ^{*}(X;N_{\mathrm{Spin}^c}^{\bullet})$ with the restricted twisted differential $D_H$, which will be used in Section~\ref{sec:mul} when studying the twisted differential $\Spinc$-\textit{co}bordism and differential multiplication. 
	\end{rmk}
	
	\medskip
	
	Combining the pairing of forms and currents with the pairing (\ref{coeffpairing}), we define a pairing 
	\begin{align}\label{pairing}
		\langle-,-\rangle\colon \Omega^k(X;N_{\Spinc}^\bullet)\otimes \Omega_k(X;V^{\Spinc}_\bullet)
		\to\mathbb{R}, 
	\end{align}
	given by
	\[
	\langle\omega\otimes\varphi,  T\otimes v\rangle:=
	T(\omega)\cdot
	\langle\varphi,v\rangle, 
	\]
	for $\omega\in\Omega^i(X)$, $T\in\Omega_i(X)$, $\varphi\in N^{j}_{\Spinc}$ and $v\in V^{\Spinc}_{j}$ with $i+j=k$.
	Since $N_{\Spinc}^{j}$ and $V^{\Spinc}_{j}$ are finite dimensional in each degree, \eqref{pairing} induces a natural identification
	\begin{align}\label{duality}
		\Omega_{k}(X;V^{\Spinc}_{\bullet})
		\cong
		\Hom_{\cts}\big(\Omega^{k}(X;N_{\Spinc}^{\bullet}),\mathbb{R}\big).
	\end{align}
	Moreover, by Proposition~\ref{prop:adjointness}, the two differentials are adjoint under the pairing
	\begin{align}\label{duality_adjoint}
		\langle D_H\alpha,\beta\rangle
		=
		\langle \alpha,\partial_H\beta\rangle,
	\end{align}
	for $\alpha\in\Omega^{k-1}(X;N_{\Spinc}^{\bullet})$ and  
	$\beta\in\Omega_{k}(X;V^{\Spinc}_{\bullet})$. 
	In particular, $D_H$-closed forms pair trivially with $\partial_H$-boundaries.
	Hence the above pairing descends to the twisted (co)homology groups. This fact will be used in the construction of the twisted differential models in the following sections.
	
	\medskip
	
	\subsubsection{Twisted Chern--Weil construction}\label{sec: cw}
	In this subsection, 
	we first refine the notion of twisted $\Spinc$-manifold (Definition~\ref{def:topman}) with differential data, obtaining the group of geometric twisted $\Spinc$-chains $\widetilde{C}^{\Spinc}_i(X,\widehat{\tau})$. 
	Then we construct a twisted Chern--Weil map
	\begin{align}\label{cw}
		\cw\colon \widetilde{C}^{\Spinc}_i(X,\widehat{\tau})\longrightarrow \Omega_{i}(X;V^{\Spinc}_{\bullet}),
	\end{align}
	sending an $i$-dimensional geometric chain to a compactly supported $i$-current. This realizes the structure maps $\ch'^{\Spinc}$ and $R^{\Spinc}$ in \eqref{diffSpincdata2}.
	
	\begin{defi}\label{def: Spincchain}
		An $n$-dimensional geometric $\widehat{\tau}$-twisted $\Spinc$-chain over $X$ is a quadruple
		\[
		(M, f, f^{\nabla}_{TM}, \widehat{\eta}),
		\]
		where
		\begin{itemize}
			\item $M$ is a compact oriented Riemannian $n$-manifold with boundary, equipped with a collar embedding of $\partial M$, along which all data are assumed to be constant;
			\item $f\colon M\to X$ is a smooth map;
			\item $f^{\nabla}_{TM}\colon M\to B_{\nabla}\SO$ is the stabilized classifying map of $TM$ with connection.
			\item $\widehat{\eta}\in\difStruc{TM}$ is a differential $\widehat{\tau}$-twisted $\Spinc$-structure on $TM$.
		\end{itemize}
	\end{defi}
	\noindent
	An isomorphism $(M,f,f^{\nabla}_{TM},\widehat{\eta})\to(M',f',f^{\nabla}_{TM'},\widehat{\eta}')$ 
	is an orientation and collar-preserving isometry $h\colon M\to M'$,
	such that $f= f'\circ h$, and the differential twisted
	$\Spinc$-structures are identified by a 2-morphism
	$W_3^\nabla
	\circ
	(dh)
	\circ \widehat\eta
	\Longrightarrow
	h^*\widehat\eta'
	$
	in $B^2_{\nabla}\uU(1)(M)$ relative to boundary collars. 
	In particular, we have $\kappa(\widehat{\eta})=h^{*}\kappa(\widehat{\eta}')$. 
	The collection of such chains forms an abelian group $\widetilde{C}^{\Spinc}_n(X,\widehat{\tau})$ under disjoint union.
	The boundary map is defined by restricting along the collar:
	\[
	\partial(M,f,f^{\nabla}_{TM},\widehat{\eta})
	:=
	\big(\partial M,\ \partial f,\ f^{\nabla}_{T\partial M},\ \partial\widehat{\eta}\big),
	\]
	This makes $\widetilde{C}^{\Spinc}_*(X,\widehat{\tau})$ a chain complex.
	
	\begin{rmk}
		In this paper, 
		\emph{geometric} (co)chains are given by \emph{quadruples} carrying connections but no extra differential form or current term, while
		\emph{differential} (co)chains are \emph{quintuples} with an additional form or current term.
	\end{rmk}

	Let $E\to M$ be an oriented bundle with connection and a differential $\widehat{\tau}$-twisted $\Spinc$-structure $\widehat{\eta}\in\difStruc{E}$. Define a cochain map
	\[
	\widehat{\eta}^{*}\colon \Omega^{*}(X;N_{\Spinc}^{\bullet})\longrightarrow \Omega^{*}(M),
	\]
	such that on monomial generators
	\[
	\widehat{\eta}^{*}(\omega\otimes p_I \zeta^k)
	:=
	f^{*}\omega \wedge (f^{\nabla}_{E})^{*}p_I \wedge \kappa(\widehat{\eta})^{k},
	\]
	where $\kappa(\widehat{\eta})$ is the global $2$-form \eqref{kappa}, and $(f_{E}^{\nabla})^*p_I $ denotes the closed Pontryagin form on $M$ associated to the connection on $E$ and the Pontryagin class $p_I$. 
	Since $(f^{\nabla}_E)^{*}p_I$ is closed and $d\kappa(\widehat{\eta})=f^{*}H$, a direct computation gives
	\[
	\widehat{\eta}^{*}(D_H(\omega\otimes p_I \zeta^k))
	=
	d\big(\widehat{\eta}^{*}(\omega\otimes p_I \zeta^k)\big),
	\]
	so $\widehat{\eta}^{*}$ is indeed a cochain map between $(\Omega^{*}(X;N_{\Spinc}^{\bullet}),D_H)$ and $(\Omega^{*}(M),d)$.

	For an $n$-dimensional geometric chain $(M,f,f^{\nabla}_{TM},\widehat{\eta})$, define a current
	\begin{align}\label{cw2}
		\cw(M,f,f^{\nabla}_{TM},\widehat{\eta})\colon
		\Omega^{n}(X;N_{\Spinc}^{\bullet})\longrightarrow \mathbb{R},
	\end{align}
	such that for a generator
	$\omega \otimes p_I  \zeta^k  \in
	\Omega^{n}(X;N_{\Spinc}^{\bullet})$, 
	\begin{align}\label{cw3}
		\cw(M, f, f^{\nabla}_{TM}, \widehat{\eta}):
		\omega \otimes p_I  \zeta^k
		\mapsto
		\int_M 
		\widehat\eta^*(\omega \otimes p_I \zeta^k).
	\end{align}
	This yields
	\[
	\cw\colon \widetilde{C}^{\Spinc}_n(X,\widehat{\tau})\longrightarrow \Omega_{n}(X;V^{\Spinc}_{\bullet}),
	\]
	which is a chain map by Stokes theorem. By construction, we see that isomorphic geometric chains give identical Chern--Weil currents. 
	Along a change of Riemannian connections on $TM$, one observes the Pontryagin forms and $\kappa(\widehat{\eta})$ change by an exact form, then the resulting currents differ by a $\partial_H$-exact term. Equivalently, the homology class of $\cw(M,f,f^{\nabla}_{TM},\widehat{\eta})$ in $H_{n}(X;V^{\Spinc}_{\bullet},H)$ is independent of the choice of connection. 
	Hence $\cw$ descends to a homomorphism
	\begin{align}\label{eq: ch'^Spinc}
		\ch'^{\Spinc}\colon \Omega^{\Spinc}_n(X,\tau)\longrightarrow H_{n}(X;V^{\Spinc}_{\bullet},H). 
	\end{align}
	Furthermore, $\ch'^{\Spinc}$ is a rational isomorphism: 
	\begin{prop}\label{prop: dRSpinc}
		Tensoring with $\mathbb{R}$, the Chern--Weil map
		\[
		\ch'^{\Spinc}\otimes\mathbb{R}:
		\Omega^{\Spinc}_n(X,\tau)\otimes\mathbb{R}
		\rightarrow
		H_n\big(X;V^{\Spinc}_\bullet,H\big)
		\]
		is an isomorphism.
	\end{prop}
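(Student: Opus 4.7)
The plan is a Mayer-Vietoris induction on a good open cover of $X$, reducing to the case of contractible opens where the claim is the classical rational isomorphism $\Omega^{\Spinc}_*(\pt)\otimes\mathbb{R} \cong V^{\Spinc}_*$ via characteristic numbers. Both sides of $\ch'^{\Spinc}\otimes\mathbb{R}$ satisfy Mayer-Vietoris: on the source this follows from the exactness axioms for parametrized spectra \cite[\S 20.1]{MaySig}, and on the target from the short exact sequence of twisted current complexes
\[
0 \to \Omega_*(U\cap V; V^{\Spinc}_\bullet) \to \Omega_*(U; V^{\Spinc}_\bullet) \oplus \Omega_*(V; V^{\Spinc}_\bullet) \to \Omega_*(U\cup V; V^{\Spinc}_\bullet) \to 0,
\]
using that $\partial_H$ is local. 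The Chern-Weil map \eqref{cw3} is natural under restriction and therefore intertwines the associated long exact sequences.

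For a contractible open $U \subset X$, pick $\beta \in \Omega^2(U)$ with $d\beta = H|_U$. A direct commutator calculation gives $[\partial,\beta\wedge(u\times -)] = -H\wedge(u\times -)$, and all higher iterated commutators vanish for degree reasons since $\beta\wedge H = H\wedge\beta$; hence conjugation by $\exp\bigl(\beta\wedge(u\times -)\bigr)$ implements a chain isomorphism between $(\Omega_*(U;V^{\Spinc}_\bullet),\partial)$ and $(\Omega_*(U;V^{\Spinc}_\bullet),\partial_H)$, and by contractibility $H_*(U;V^{\Spinc}_\bullet,H|_U) \cong V^{\Spinc}_*$. On the bordism side, $\Omega^{\Spinc}_*(U,\tau|_U)\otimes\mathbb{R} \cong V^{\Spinc}_*$ since $\tau|_U$ is trivializable and $U$ is contractible. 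After tracking the transformation of the $2$-form $\kappa(\widehat{\eta})$ under the gauge trivialization of $\widehat{\tau}|_U$, the Chern-Weil map \eqref{cw3} reduces to the classical characteristic-number pairing $\Omega^{\Spinc}_*(\pt)\otimes\mathbb{R} \xrightarrow{\cong} V^{\Spinc}_*$ furnished by the rational splitting of $M\Spinc$.

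With the contractible case established, a standard induction on the cardinality of a finite good cover combined with the Five Lemma on the Mayer-Vietoris sequences yields the isomorphism for manifolds of finite type, and general $X$ follows by a colimit argument. The main obstacle will be the compatibility verification on a contractible piece: one must simultaneously track how the gauge trivialization of $\widehat{\tau}|_U$ affects (i) the $2$-form $\kappa(\widehat{\eta})$ attached to a differential twisted $\Spinc$-structure, (ii) the conjugation operator $\exp\bigl(\beta\wedge(u\times -)\bigr)$, and (iii) the $\zeta^k$-components of the Chern-Weil formula \eqref{cw3}, so that all three combine to recover the untwisted Pontryagin-number integrals. This is precisely the point where the algebraic structure developed in Proposition~\ref{prop:adjointness} and the local description \eqref{kappa_local} of $\kappa(\widehat{\eta})$ must be used in tandem.
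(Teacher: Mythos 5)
Your proposal takes a genuinely different route from the paper. The paper compares spectral sequences: it runs the twisted Atiyah--Hirzebruch spectral sequence for $\Omega^{\Spinc}_*(X,\tau)\otimes\mathbb{R}$ against the spectral sequence of the current-degree filtration on $(\Omega_*(X;V^{\Spinc}_\bullet),\partial_H)$, notes that $\cw$ is a filtered chain map, identifies the two $E^2$-pages with $H_p(X;\mathbb{R})\otimes V^{\Spinc}_q$, and concludes by the comparison theorem. You instead propose a Mayer--Vietoris induction over a good cover, with the contractible base case handled by conjugating $\partial_H$ to $\partial$ via $\exp(\beta\wedge(u\times-))$ for a primitive $\beta$ of $H|_U$. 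Both are standard strategies and both can be made to work.

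The difference has a real consequence for how much local bookkeeping is needed, and your own closing paragraph puts a finger on it. In the paper's AHSS argument, the terms of the Chern--Weil current \eqref{cw3} involving $\kappa(\widehat\eta)^k$ with $k\ge 1$ drop current degree by $2k$, hence land in \emph{strictly lower filtration}. They therefore contribute nothing to the comparison of $E^2$-pages; the $E^2$-map is forced to be the canonical coefficient identification and the higher differentials take care of themselves by naturality. Your approach meets the $\kappa$-dependence head-on in the base case: you must verify that the gauge trivialization of $\widehat\tau|_U$, the conjugation by $\exp(\beta\wedge(u\times-))$, and the $\zeta^k$-components of $\cw$ assemble to the untwisted Pontryagin-number pairing. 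You correctly flag this as ``the main obstacle,'' but you do not carry out the computation; as written, the base case is asserted rather than proved. This is the one genuine gap. The verification is not hard --- using \eqref{kappa_local} the gauge-transformed $\kappa$ shifts by $\beta|_M$, and Proposition~\ref{prop:adjointness} converts the exponential twist into the $\zeta$-bookkeeping --- but it is precisely the work the paper's filtration trick is designed to avoid. A few smaller points to tighten if you pursue this route: you should state that $\exp(\beta\wedge(u\times-))$ is a finite sum because $\beta\wedge(-)$ strictly drops current degree and $\dim X<\infty$; and you should make explicit that the connecting homomorphisms in the two Mayer--Vietoris sequences are intertwined by $\cw$, which is what the Five Lemma actually needs.
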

	
	\begin{proof}
		
		We have the twisted Atiyah-Hirzebruch spectral sequence, cf. Proposition~\ref{prop:AHSS}, 
		\[
		E^2_{p,q}\cong H_p\big(X;L_q(X,P_\tau(MT\Spinc))\big)
		\Longrightarrow \Omega^{\Spinc}_{p+q}(X,\tau).
		\]
		Tensoring with $\mathbb{R}$, we have 
		\[
		E^2_{p,q}\otimes\mathbb{R}\cong H_p(X;\mathbb{R})\otimes \big(\Omega^{\Spinc}_q(\pt)\otimes\mathbb{R}\big).
		\]
		The first possible nonzero higher differential is
		\[
		d^3=H\wedge(u\times-)\colon E^3_{p,q}\to E^3_{p-3,q+2}.
		\]

		On the other hand, we define an increasing filtration on the de Rham chain complex by filtering current degree,
		\[
		F_p \Omega_*
		(X;V^{\Spinc}_\bullet)=
		\bigoplus_{i\le p}
		\Omega_i(X)\otimes V^{\Spinc}_\bullet,
		\]
		which is compatible with the differential $\partial_H$. 
		The associated spectral sequence satisfies
		\[
		E'^0_{p,q}\cong \Omega_p(X)\otimes V^{\Spinc}_q.
		\]
		Since $V^{\Spinc}_q=0$ for $q$ odd, all even differentials $d'^{2r}$ vanish by parity, so
		\[
		E'^2_{p,q}\cong H_p(X;\mathbb{R})\otimes V^{\Spinc}_q
		\Longrightarrow
		H_{p+q}\big(X;V^{\Spinc}_\bullet,H\big),
		\]
		while the first possible nonzero higher differential is
		\[
		d'^3=H\wedge(u\times-)\colon E'^3_{p,q}\to E'^3_{p-3,q+2}.
		\]
		
		By construction, the twisted Chern--Weil map $\cw$ yields a morphism of filtered complexes which intertwines differentials. On the second page, we obtain the map
		\[
		\ch'^{\Spinc}_{E^2}\otimes\mathbb{R}
		\colon
		H_p(X;\mathbb{R})\otimes\big(\Omega^{\Spinc}_q(\pt)\otimes\mathbb{R}\big)
		\longrightarrow
		H_p(X;\mathbb{R})\otimes V^{\Spinc}_q,
		\]
		which is the identity on $H_p(X;\mathbb{R})$ and the canonical identification
		$\Omega^{\Spinc}_q(\pt)\otimes\mathbb{R}\cong V^{\Spinc}_q$ on coefficients. Thus $\ch'^{\Spinc}_{E^2}\otimes\mathbb{R}$ is an isomorphism, and naturality implies it commutes with $d^3$ and hence with all higher differentials. By the comparison theorem,  
		\[
		\ch'^{\Spinc}\otimes\mathbb{R}\colon
		\Omega^{\Spinc}_n(X,\tau)\otimes\mathbb{R}\rightarrow
		H_n\big(X;V^{\Spinc}_\bullet,H\big)
		\]
		is an isomorphism.
	\end{proof}
	
	\medskip
	
	\subsubsection[The differential cycle model]{The cycle model for differential twisted $\Spinc$-bordism}\label{sec: diffext}
	
	We now record our differential model for twisted $\Spinc$-bordism, and verify it satisfies Definition~\ref{defi:diffextho}.
	
	\begin{defi}\label{def: diffSpincbordism}
		Let $\widehat{\tau}\colon X\to B^2_{\mathrm{conn}}\uU(1)$ be a differential twist with curvature $H$. Define the differential $\widehat{\tau}$-twisted $\Spinc$-bordism group by
		\[
		\widehat{\Omega^{\Spinc}_{n-1}}(X,\widehat{\tau})
		:=\big\{(M,f,f^{\nabla}_{TM},\widehat{\eta},\phi)\big\}\big/\sim,
		\]
		where $(M,f,f^{\nabla}_{TM},\widehat{\eta})$ is a {closed} $(n-1)$-dimensional geometric $\widehat{\tau}$-twisted $\Spinc$-chain over $X$ (Def.~\ref{def: Spincchain}), and
		$\phi\in \Omega_n(X;V^{\Spinc}_{\bullet})/\mathrm{im}\partial_H$.
		The relation $\sim$ is generated by:
		\begin{itemize}
			\item \emph{Isomorphisms:}
			\[
			(M,f,f^{\nabla}_{TM},\widehat{\eta},\phi) \sim (M',f',f^{\nabla}_{TM'},\widehat{\eta}',\phi),
			\]
			for isomorphic geometric chains $(M,f,f^{\nabla}_{TM},\widehat{\eta})$ and 
			$(M',f',f^{\nabla}_{TM'},\widehat{\eta}')$.

			\item \emph{Additivity:} disjoint union on geometric chains and addition on $\phi$.
			
			\item \emph{Bordism:}
			\[
			(\partial W,\partial F,f^{\nabla}_{T\partial W},\partial\widehat{\eta},0)\ \sim\ (\emptyset,\emptyset,\emptyset,\emptyset,-\cw(W,F,f^{\nabla}_{TW},\widehat{\eta})),
			\]
			for any $n$-dimensional geometric $\widehat{\tau}$-twisted chain $(W,F,f^{\nabla}_{TW},\widehat{\eta})$.
		\end{itemize}
	\end{defi}

	Define the structure maps
	\[
	\begin{aligned}
		R^{\Spinc}\colon\ 
		\widehat{\Omega^{\Spinc}_{n-1}}(X,\widehat{\tau})
		&\longrightarrow
		\Omega_{n-1}^{\partial_H\text{-clo}}(X;V^{\Spinc}_{\bullet}),
		&
		(M,f,f^{\nabla}_{TM},\widehat{\eta},\phi)
		&\longmapsto
		\cw(M,f,f^{\nabla}_{TM},\widehat{\eta})-\partial_H\phi,\\
		I^{\Spinc}\colon\ 
		\widehat{\Omega^{\Spinc}_{n-1}}(X,\widehat{\tau})
		&\longrightarrow
		\Omega^{\Spinc}_{n-1}(X,\tau),
		&
		(M,f,f^{\nabla}_{TM},\widehat{\eta},\phi)
		&\longmapsto
		(M,f, f_{TM}, \eta),\\
		a^{\Spinc}\colon\ 
		\Omega_n(X;V^{\Spinc}_{\bullet})/\mathrm{im}\partial_H
		&\longrightarrow
		\widehat{\Omega^{\Spinc}_{n-1}}(X,\widehat{\tau}),
		&
		\phi
		&\longmapsto
		(\emptyset,\emptyset,\emptyset,\emptyset,-\phi).
	\end{aligned}
	\]
	It is straightforward to check the structure maps are well-defined.
	Now we are ready to prove Theorem~\ref{thm:diffspinc}, showing that our model \eqref{diffSpincdata2} defines a differential extension of twisted $\Spinc$-bordism theory.

	\begin{proof}[Proof of Theorem~\ref{thm:diffspinc}]
		The functoriality in $(X,\widehat{\tau})$ is clear from naturality of pushforward. 
		We verify the requirements (i)-(iii) in Definition \ref{defi:diffextho}. 
		The canonical isomorphism (i) is shown in Proposition~\ref{prop: dRSpinc}; the commutativity condition (ii) follows from the construction. It remains to check the exactness condition (iii), for the following sequence
		\begin{align}\label{spincSES}
			{\Omega}^{\mathrm{Spin}^c}_{n}(X, \tau)
			\xrightarrow{\ch'^{\Spinc}}
			\Omega _n(X;V^{\mathrm{Spin}^c}_{\bullet})/\mathrm{im}\partial_H
			\xrightarrow{a^{\mathrm{Spin}^c}}
			\widehat{\Omega^{\mathrm{Spin}^c}_{n-1}}(X, \widehat{\tau})
			\xrightarrow{I^{\mathrm{Spin}^c}}
			{\Omega}^{\mathrm{Spin}^c}_{n-1}(X, \tau)
			\longrightarrow
			0.
		\end{align}
		The surjectivity of $I^{\Spinc}$ is due to (\ref{prop:surj}). 
		
		For the exactness at $\widehat{\Omega^{\mathrm{Spin}^c}_{n-1}}(X, \widehat{\tau})$, 
		pick a representative $(M,f,f^{\nabla}_{TM},\widehat{\eta},\phi)$ in $\ker(I^{\Spinc})$, and we have $M$ bounds some $n$-dimensional geometric chain $B$. By the bordism equivalence relation, we have
		\[
		(M,f,f^{\nabla}_{TM},\widehat{\eta},\phi) \sim (\emptyset,\emptyset,\emptyset,\emptyset,-\cw(B)+\phi)=
		a^{\Spinc}([\cw(B)-\phi]),
		\]
		showing that 
		$\ker(I^{\Spinc}) \subset \mathrm{im}(a^{\Spinc})$. The reverse inclusion is clear.

		Next we show \(\im(\ch'^{\Spinc})=\ker(a^{\Spinc})\).
		For a representative $\phi$ in $\ker(a^{\Spinc})$, there exists an $n$-dimensional geometric chain
		$B=(W,F,f^\nabla_{TW},\widehat{\eta})$ such that
		\[
		(\partial W,\partial F,f^{\nabla}_{T\partial W},\partial\widehat{\eta},\cw(B))
		= (\emptyset,\emptyset,\emptyset,\emptyset,-\phi),
		\] 
		equivalently, 
		\[
		\partial W=\emptyset, \quad 
		\phi\equiv 
		-\cw(B)
		\mod \mathrm{im}\partial_H. 
		\]
		Thus $-W$ represents an $n$-bordism class, and $[\phi]=\ch'^{\Spinc}([-W])$, showing that  $\ker(a^{\Spinc})\subset\mathrm{im}(\ch'^{\Spinc})$. For the reverse inclusion, pick a closed $n$-chain $(W, F, f_{TW}, \eta)$ which represents a bordism class in $\Omega_n^{\Spinc}(X,\tau)$, and choose a geometric
		refinement $B=(W,F,f^\nabla_{TW},\widehat{\eta})$. 
		Since $\partial W=\emptyset$, the bordism equivalence relation gives
		\[
		0\sim
		(\partial W,\partial F,f^{\nabla}_{T\partial W},\partial\widehat{\eta},\cw(B))
		= (\emptyset,\emptyset,\emptyset,\emptyset,\cw(B)),
		\]
		implying 
		$
		a^{\Spinc}(\ch'^{\Spinc}([W]))=0.
		$ This concludes the proof. 
	\end{proof}
	
	\medskip
	\subsection{Differential Anderson dual to twisted $\Spinc$-bordism}\label{sec:Andersondual}
	In this subsection, we construct our differential model for the Anderson dual to twisted $\Spinc$-bordism
	\begin{align}\label{diffIOdata2}
		\Big(
		\bigl(
		\widehat{
			I\Omega^{\Spinc}_{\mathrm{dR}}
		}
		\bigr)^{*}(-, -),
		\mathcal{M}^*_{I\Omega}(-, -),
		\ch'_{I\Omega}, 
		R_{I\Omega},
		I_{I\Omega},
		a_{I\Omega}
		\Big).
	\end{align}
	For an object $(X, \widehat{\tau})$, 
	\begin{itemize}
		\item
		$(\widehat{I\Omega_{\mathrm{dR}}^{\mathrm{Spin}^c}})^n(X,\widehat{\tau})$ 
		is the abelian group of pairs $(\omega,h)$, where $\omega$ is a twisted closed differential form 
		valued in the $\mathrm{Spin}^c$-characteristic classes of total degree $n$, 
		and $h$ is an $\mathbb{R}/\mathbb{Z}$-valued functional on 
		$(n-1)$-dimensional differential twisted $\mathrm{Spin}^c$-bordism cycles, 
		satisfying a natural compatibility condition.
		
		\item The complex 
		$\mathcal{M}^*_{I\Omega}(X,\widehat{\tau}) := (\Omega^{*}(X;N_{\mathrm{Spin}^c}^{\bullet}), D_H)$ 
		is a twisted de~Rham \emph{cochain} complex 
		with coefficients $N_{\mathrm{Spin}^c}^{\bullet} 
		= \mathrm{Hom}(\Omega^{\mathrm{Spin}^c}_{\bullet}(\mathrm{pt}), \mathbb{R})$, 
		whose differential is deformed by the curvature $3$-form $H$,
		\[
		D_H = d + H \wedge \partial_{\zeta},
		\]
		where $\zeta$ is a degree $2$ generator in $N_{\mathrm{Spin}^c}^{\bullet}$.
		
		\item $\ch'_{I\Omega}$ is a homomorphism from topological twisted Anderson dual into the cohomology of $\mathcal{M}^*_{I\Omega}(X,\widehat{\tau})$.
		
		\item $R_{I\Omega}$ is the \emph{curvature map}, sending a pair $(\omega,h)$ to its curvature form $\omega$.
		
		\item $I_{I\Omega}$ is the \emph{forgetful map} that discards the differential data.
		
		\item $a_{I\Omega}$ assigns to each 
		$\alpha \in \Omega^{n-1}(X;N_{\mathrm{Spin}^c}^{\bullet})/\mathrm{im}\,D_H$ 
		a compatible pair in 
		$(\widehat{I\Omega_{\mathrm{dR}}^{\mathrm{Spin}^c}})^n(X,\widehat{\tau})$.
	\end{itemize}
	
	\begin{thm}\label{thm:diffio}
		The model (\ref{diffIOdata2}) 
		\[
		\Big(
		\bigl(
		\widehat{
			I\Omega^{\Spinc}_{\mathrm{dR}}
		}
		\bigr)^{*}(-, -),
		\mathcal{M}^*_{I\Omega}(-, -),
		\ch'_{I\Omega}, 
		R_{I\Omega},
		I_{I\Omega},
		a_{I\Omega}
		\Big)
		\]
		is a differential extension of the Anderson dual to twisted $\Spinc$-bordism theory, in the sense of Definition~\ref{defi:diffextco}. 
	\end{thm}
	To prove the theorem, 
	we start by reviewing the definition of Anderson duals, and Yamashita--Yonekura's differential models in Section~\ref{sec: untwistedAnderson}. 
	Then 
	in Section~\ref{sec: twistedAnderson}, we first give a topological definition of the Anderson dual to twisted $\Spinc$-bordism via parametrized spectra, then establish our differential model (\ref{diffIOdata2}). 
	After that, we 
	finish the proof of Theorem~\ref{thm:diffio} by verifying the desired properties of a twisted differential extension in Definition~\ref{defi:diffextco}.
	
	\subsubsection{Review of Anderson dual and Yamashita--Yonekura's model}\label{sec: untwistedAnderson}
	We start with a review of Anderson duality. 
	For an injective $\mathbb{Z}$-module $R$, the functor $$
	\Hom(\pi_*(-), R ): \mathsf{Sp} \to \mathsf{Ab}^{\mathbb{Z}}
	$$ is a cohomology theory, whose representing spectrum is denoted by $I_R$. 
	Setting $R= \mathbb{Q}$ and $\mathbb{Q}/\mathbb{Z}$, the quotient map $\mathbb{Q}\to\mathbb{Q}/\mathbb{Z}$ induces a natural transformation 
	\[
	\Hom(\pi_*(-), \mathbb{Q})
	\to
	\Hom(\pi_*(-), \mathbb{Q}/\mathbb{Z}),
	\]
	which corresponds to a map of spectra 
	\begin{align}\label{IQIQZ}
		I_{\mathbb{Q}} \to I_{\mathbb{Q}/\mathbb{Z}}. 
	\end{align}
	The \emph{Anderson dual of the sphere spectrum} $I_{\mathbb{Z}}$ is defined to be the homotopy fiber of (\ref{IQIQZ}). 
	For a general spectrum $E$, the \emph{Anderson dual of $E$} is defined as the function spectrum
	\[
	I_{\mathbb{Z}}E:=F(E, I_{\mathbb{Z}}). 
	\]
	There is a natural exact sequence: 
	\begin{align}\label{AndersonSES}
		0
		\to
		\mathrm{Ext}(E_{n-1}(X),\mathbb{Z})
		\to
		(I_{\mathbb{Z}}E)^n(X)
		\to
		\mathrm{Hom}(E_{n}(X),\mathbb{Z})
		\to
		0, 
	\end{align}
	together with a Picard groupoid description
	\begin{align}\label{AndersonPic}
		(I_{\mathbb{Z}}E)^n(X)
		\simeq 
		\pi_{0}\FunPic
		\big(
		\pi_{\le1}
		L(E\wedge X)
		_{1-n},
		(\mathbb{R}
		\to
		\mathbb{R}/\mathbb{Z})
		\big).
	\end{align}
	Here $L$ denotes $\Omega$-spectrification,  $\pi_{\le1}$ the fundamental Picard groupoid, $\pi_{0}\FunPic$ the group of natural isomorphism classes of functors of Picard groupoids, and 
	$
	\big(
	\mathbb{R}
	\to
	\mathbb{R}/\mathbb{Z}
	\big)
	$
	the Picard groupoid with objects in $\mathbb{R}/\mathbb{Z}$, morphisms given by $x_0\xrightarrow{y} x_1$ for each 
	$x_1-x_0=y \ \modZ$. Readers are referred to \cite{HS} for a detailed account. 
	
	\medskip
	
	In \cite{Yam1},  Yamashita and Yonekura construct a de Rham model for the Anderson dual to $G$-bordism theories. 
	Further in \cite{Yam23}, Yamashita describes a general framework for differential Anderson duals $(\widehat{IE})^*$ when a differential homology theory $\widehat{E}_*$ is provided. 
	For a structure group $G$, their de Rham model consists of the following data $
	\big(
	\bigl(
	\widehat{I\Omega^{G}_{\dR}}
	\bigr)^{*}, R, I, a
	\big)
	$, together with an $S^1$-integration $\int$. 
	Concretely, for each manifold $X$ and integer $n$, let 
	\[
	\bigl(
	\widehat{I\Omega^{G}_{\dR}}
	\bigr)^{n}(X)
	:=\{(\omega, h)\},
	\]
	where
	\begin{itemize}
		\item 
		$\omega \in
		\Omega^n_{\text{clo}}
		(X; N_{G}^{\bullet})  $ 
		is a $d$-closed 
		$N_{G}^{\bullet}$-valued form,
		\item $h:
		\widehat{
			\Omega^{G}_{n-1}
		}
		(X)
		\to\mathbb{R}/\mathbb{Z}$ 
		is a group homomorphism,
		\item they satisfy the compatibility condition 
		\[
		h\circ a=
		\modZ \circ
		\langle- ,\omega\rangle. 
		\]
	\end{itemize}
	
	\begin{thm}[\cite{Yam1}]
		$
		\big(
		\bigl(
		\widehat{I\Omega^{G}_{\dR}}
		\bigr)^{*}, R, I, a, \int
		\big)
		$
		is a differential extension with $S^1$-integration to the Anderson dual to $G$-bordism,   in the sense of \cite{BS10}.
	\end{thm}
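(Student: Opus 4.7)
The plan is to verify that $\bigl(\bigl(\widehat{I\Omega^{G}_{\dR}}\bigr)^{*}, R, I, a, \int\bigr)$ satisfies the Bunke--Schick axioms of a differential extension with $S^{1}$-integration. My strategy follows \cite{Yam1,Yam23} and rests on the Picard groupoid description \eqref{AndersonPic} of the Anderson dual: a class in $(I_{\mathbb{Z}}MG)^{n}(X)$ is a natural isomorphism class of functors from the fundamental Picard groupoid of $(n-1)$-dimensional geometric $G$-cycles over $X$ (with bordism morphisms) to $(\mathbb{R}\to\mathbb{R}/\mathbb{Z})$. A compatible pair $(\omega,h)$ produces such a functor by sending a closed cycle to the value of $h$ at an arbitrary differential refinement (well-defined modulo $\mathbb{Z}$ by the compatibility $h\circ a = \modZ \circ \langle-,\omega\rangle$), and by sending each bordism $W$ to the real lift $\int_{W}(f^{\nabla}_{TW})^{*}\omega$.

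First I would define the structure maps: $R(\omega,h)=\omega$ is immediate; $I(\omega,h)$ is the Picard groupoid functor above; $a$ sends $\alpha \in \Omega^{n-1}(X;N_{G}^{\bullet})/\im d$ to the pair $(d\alpha, h_{\alpha})$, where $h_{\alpha}$ pairs a differential cycle against $\alpha$ via the Chern--Weil current together with the $\phi$-component, reduced mod $\mathbb{Z}$; and $\int$ is the standard fibre integration on forms combined with the induced map on cycles. The identities $R\circ a = d$ and $I\circ a = 0$ (up to natural isomorphism of Picard-groupoid functors, since $\int_{W}d\alpha = \int_{\partial W}\alpha$) and the commutativity of the compatibility diagram are then direct calculations. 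The rationalization $\ch'_{I\Omega}\otimes\mathbb{R}$ being an isomorphism is obtained by dualizing the untwisted analogue of Proposition~\ref{prop: dRSpinc}, combined with the Anderson short exact sequence \eqref{AndersonSES} applied to $MG$.

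The main obstacle is the exactness of the defining sequence, in particular the surjectivity of $I$ onto the topological Anderson dual. Given an arbitrary Picard groupoid functor $\varphi\colon \pi_{\le 1}L(MG\wedge X_{+})_{1-n}\to (\mathbb{R}\to\mathbb{R}/\mathbb{Z})$, one must lift it to a pair $(\omega,h)$. The $\mathbb{R}$-valued morphism data of $\varphi$ defines a linear functional on bordism $n$-chains modulo boundaries; representability of this functional as the pairing against a closed form $\omega \in \Omega^{n}_{\clo}(X;N_{G}^{\bullet})$ follows by the de Rham theorem together with the surjectivity of the Chern--Weil map from Section~\ref{sec: cw}. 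Once such an $\omega$ is fixed, the values of $\varphi$ on objects uniquely extend to an $h$ on differential cycles that is compatible with $\omega$, using the exact sequence for $\widehat{\Omega^{G}_{n-1}}(X)$. The remaining kernel-image identities reduce to corresponding bordism-theoretic statements, and the naturality together with $S^{1}$-integration compatibilities (base-change for fibre integration, its interaction with $R$, $I$, $a$) then follow by standard checks.
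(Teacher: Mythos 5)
First, note that the paper does not actually prove this statement: it is quoted verbatim from \cite{Yam1} as background, and the closest thing to a proof in the paper is the argument for its twisted analogue, Theorem~\ref{thm:diffio2} (via the long exact sequence lemma, Lemma~\ref{lem:pic}, and Lemma~\ref{lem: dRIO}). Measured against that, your proposal uses the same core ingredients — the Picard groupoid description \eqref{AndersonPic}, the de Rham isomorphism for bordism, and the perfect pairing between the $V^{\bullet}_{G}$-valued current complex and the $N^{\bullet}_{G}$-valued form complex — but takes a genuinely different tactical route at the crucial step. You attack surjectivity of $I$ head-on by lifting an arbitrary Picard groupoid functor $\varphi$ to a pair $(\omega,h)$; the paper (following \cite{Yam1,Yam23}) instead builds a comparison homomorphism $F$ in the forward direction, from $\widehat{(I\Omega^{G}_{\dR})}{}^{n}(X)/\im(a)$ to $(I_{\mathbb{Z}}MG)^{n}(X)$, checks it is compatible with the two ends of the Anderson short exact sequence \eqref{AndersonSES}, and concludes by the five lemma. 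The forward route buys you injectivity and surjectivity simultaneously and never requires exhibiting a lift.

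The thin spot in your version is precisely the sentence ``the values of $\varphi$ on objects uniquely extend to an $h$ on differential cycles that is compatible with $\omega$.'' Uniqueness is false (one may always modify $h$ by a homomorphism pulled back from $\Omega^{G}_{n-1}(X)$, which is the $\mathrm{Ext}$-ambiguity in \eqref{AndersonSES}), and existence is not automatic: on $\im(a)$ the value of $h$ is forced by $\omega$, extension off $\im(a)$ uses divisibility of $\mathbb{R}/\mathbb{Z}$, but you must then argue that the resulting functor $I(\omega,h)$ is naturally isomorphic to the given $\varphi$, not merely that it agrees on morphism data after passing to homology. This is exactly where the paper's argument invokes the splitting of the relevant Picard groupoid — the vanishing of its $k$-invariant, which rests on torsion-freeness of $\pi_{n}$ of the Moore-spectrum cofiber $J$ — an ingredient your sketch omits. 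The $S^{1}$-integration axioms are also dispatched in one clause; in the paper's framework they come from the differential pushforward of Section~\ref{sec:mul} specialized to $X\times S^{1}\to X$, and the Bunke--Schick compatibilities there do require the twisted Leibniz rule and the fiber-product functor, so ``standard checks'' undersells the work. None of this is a fatal error, but as written the surjectivity step is a sketch of a harder argument than the one the literature actually runs.
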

	Our next task is describing a twisted refinement of the above model.
	
	\medskip
	\subsubsection{Twisted Anderson dual and the differential model}\label{sec: twistedAnderson}
	We first define the Anderson dual to twisted $\Spinc$-bordism via parametrized spectra, and then present our construction of its differential model.

	\begin{defi}\label{defi: toptwiAnd}
		For a manifold $X$ and a topological twist ${\tau}$ over $X$, define the Anderson dual to twisted $\Spinc$-bordism as the fiberwise dual
		of the twisted parametrized spectrum $P_\tau(MT\Spinc)$ over $X$. Concretely, 
		\[
		I^{\tau}_{\mathbb{Z}}MT\mathrm{Spin}^c
		:=
		F_X\bigl(P_\tau(MT\mathrm{Spin}^c),\;X\times I_{\mathbb{Z}}\bigr). 
		\]
	\end{defi}
	
	\begin{rmk}\label{rmk: eq}
		There is a canonical equivalence over $X$, cf. \cite[Ch.\ 12]{MaySig}
		\[
		F_X\bigl(P_\tau(MT\mathrm{Spin}^c),\;X\times I_{\mathbb{Z}}\bigr)
		\simeq
		P_{\tau}(I_{\mathbb{Z}}MT\Spinc),
		\]
		where $P_{\tau}(I_{\mathbb{Z}}MT\Spinc)$ is the associated bundle of spectra with fiber the Anderson dual $I_{\mathbb{Z}}MT\mathrm{Spin}^c$, with the structure group $K(\mathbb{Z},2)$ acting on $I_{\mathbb{Z}}MT\mathrm{Spin}^c$ via its action on $MT\mathrm{Spin}^c$. This equivalence
		identifies the left hand side, \emph{the Anderson dual of the twisted theory}, with the right hand side, \emph{the twisted Anderson dual}. Hence no ambiguity will arise from these terminologies in the sequel.
	\end{rmk}

	As in \eqref{def: twicoho}, define the twisted cohomology groups by
	\begin{align}\label{homAnderson}
		\bigl(I\Omega^{\mathrm{Spin}^c}\bigr)^n(X,\tau)
		:=
		\pi_{-n}
		\bigl( 
		r_*(I^{\tau}_{\mathbb{Z}}MT\mathrm{Spin}^c)
		\bigr)=
		\pi_{-n}\bigl(F(P_\tau(MT\mathrm{Spin}^c)/X,\;I_{\mathbb{Z}})\bigr).
	\end{align}
	Then by the properties of Anderson duals \eqref{AndersonSES} and \eqref{AndersonPic}, we have the following short exact sequence
	\begin{align}\label{AndersonSES_twistedSpinc}
		0
		\to
		\mathrm{Ext}(\Omega_{n-1}^{\mathrm{Spin}^c}(X,\tau),\mathbb{Z})
		\to
		(I\Omega^{\mathrm{Spin}^c})^n(X,\tau)
		\to
		\mathrm{Hom}(\Omega_{n}^{\mathrm{Spin}^c}(X,\tau),\mathbb{Z})
		\to
		0,
	\end{align}
	and the Picard groupoid identification
	\begin{align}\label{picard_spinc}
		\big(
		I\Omega^{\Spinc}
		\bigr)^{n}
		(X, \tau)
		\simeq 
		\pi_{0}\FunPic
		\big(
		\pi_{\le1}
		L
		(
		P_{\tau}(MT\Spinc)/X
		)_{1-n},
		(\mathbb{R}
		\to
		\mathbb{R}/\mathbb{Z})
		\big)
		.
	\end{align}

	We are now ready to construct the differential extension promised in (\ref{diffIOdata2})
	\[
	\Big(
	\bigl(
	\widehat{
		I\Omega^{\Spinc}_{\mathrm{dR}}
	}
	\bigr)^{*}(-, -),
	\mathcal{M}^*_{I\Omega}(-, -),
	\ch'_{I\Omega}, 
	R_{I\Omega},
	I_{I\Omega},
	a_{I\Omega}
	\Big),
	\]
	as a twisted generalization of Yamashita--Yonekura's model  \cite{Yam1}. Given a manifold $X$ and a differential twist $\widehat{\tau}$ on $X$ with underlying topological twist $\tau$, we define the differential Anderson dual to twisted $\Spinc$-bordism as follows. 
	
	\begin{defi}\label{def: diffAnderson}
		For each integer $n$, 
		\[
		\bigl(
		\widehat{
			I\Omega^{\Spinc}_{\mathrm{dR}}
		}
		\bigr)^{n}(X, \widehat{\tau})
		:=\{(\omega, h)\},
		\]
		where
		\begin{itemize}
			\item 
			$\omega \in
			\Omega^n_{D_H\mathrm{-clo}}
			(X; N_{\Spin^c}^{\bullet})  $,
			%is a $D_H$-closed $N_{\Spin^c}^{\bullet}$-valued form,
			\item $h:
			\widehat{
				\Omega^{\Spinc}_{n-1}
			}
			(X, \widehat{\tau})
			\to\mathbb{R}/\mathbb{Z}$ 
			is a group homomorphism,
			\item $\omega$ and $h$ satisfy the compatibility condition 
			\[
			h\circ a^{\Spinc}=
			\mathrm{mod}\mathbb{Z} \circ
			\langle- ,\omega\rangle.
			\]
		\end{itemize}
	\end{defi}
	\noindent
	We now describe the remaining ingredients in (\ref{diffIOdata2}).
	Recall the twisted de Rham complex
	\[
	\mathcal{M}^*_{I\Omega}(X, \widehat{\tau})
	:=
	\big(
	\Omega^{*}
	(X;N_{\mathrm{Spin}^c}^{\bullet}),
	D_H
	\big),
	\]
	is already defined in (\ref{MIO}). 
	The curvature map $R_{I\Omega}$ is defined as the projection to the first component
	\[
	R_{I\Omega}:
	\bigl(
	\widehat{I\Omega^{\Spinc}_{\mathrm{dR}}}
	\bigr)^{n}(X, \widehat{\tau})
	\to
	\Omega^n_{D_H\mathrm{-clo}}
	(X; N_{\Spin^c}^{\bullet}).
	\]
	$a_{I\Omega}$ is defined by the universal property of the pullback,
	\[
	a_{I\Omega}: 
	{\Omega ^{n-1}(X;N_{\mathrm{Spin}^c}^{\bullet})/\mathrm{im}D_H}
	\to
	\bigl(
	\widehat{
		I\Omega^{\Spinc}_{\mathrm{dR}}
	}
	\bigr)^{n}(X, \widehat{\tau}), \quad
	\alpha\mapsto 
	\big(D_H\alpha, j(\alpha)\big),
	\]
	where $j(\alpha)=\mathrm{mod}\mathbb{Z}\circ (R^{\Spinc})^*\circ\langle-,\alpha\rangle$ is given by the following composition. 
	
	\[
	\begin{aligned}
		j: 
		{\Omega^{n-1}(X; N^{\bullet}_{\Spinc})/\mathrm{im}D_H}
		& \to
		\mathrm{Hom}\big(\Omega_{n-1}^{\partial_H\mathrm{-clo}}(X;V^{\Spinc}_{\bullet}),\mathbb{R}\big)
		\\
		&\xrightarrow{(R^{\Spinc})^*}
		\mathrm{Hom}\big({\widehat{\Omega^{\mathrm{Spin}^c}_{n-1}}(X, \widehat{\tau})} ,\mathbb{R}\big)
		\xrightarrow{\mathrm{mod}\mathbb{Z}} 
		\mathrm{Hom}\big(
		{\widehat{\Omega^{\mathrm{Spin}^c}_{n-1}}(X, \widehat{\tau})} ,
		\mathbb{R}/\mathbb{Z}\big). 
	\end{aligned}
	\]
	Here the first map is induced from the dual pairing in \eqref{pairing},  sending a $D_H$-exact form to a continuous functional vanishing on all $\partial_H$-closed currents. 
	$I_{I\Omega}$ is simply defined as the quotient map 
	\[
	I_{I\Omega}:
	\bigl(
	\widehat{I\Omega^{\Spinc}_{\mathrm{dR}}}
	\bigr)^{n}(X, \widehat{\tau})
	\to
	\bigl(
	\widehat{I\Omega^{\Spinc}_{\mathrm{dR}}}
	\bigr)^{n}(X, \widehat{\tau})
	/\im (a_{I\Omega})
	\]
	for now, later in Lemma~\ref{lem:pic} we will identify the quotient with the topological theory. 
	Finally, 
	by the definition of $a_{I\Omega}$, the curvature map $R_{I\Omega}$ descends to the following
	\[
	\ch'_{I\Omega}: 
	\bigl(\widehat{I\Omega^{\Spinc}_{\mathrm{dR}}}\bigr)^{n}(X,\widehat{\tau})\big/\im\bigl(a_{I\Omega}\bigr)
	\xrightarrow{}
	H^n(X; N^{\bullet}_{\Spinc},H). 
	\]
	Functoriality follows directly from the constructions. 
	
	\medskip
	
	To compare the differential model with the topological Anderson dual, we need the following homomorphism
	\[
	p:
	\mathrm{Hom}\bigl(\Omega^{\mathrm{Spin}^c}_{n-1}(X,\tau),\mathbb{R}/\mathbb{Z}\bigr)
	\xrightarrow{}
	\Bigl(\widehat{I\Omega^{\Spinc}_{\mathrm{dR}}}\Bigr)^{n}(X,\widehat{\tau})\big/\im\bigl(a_{I\Omega}\bigr),
	\]
	given by
	\[
	h\mapsto
	I_{I\Omega} (0, (I^{\Spinc})^*(h)).
	\]
	By the exactness of \eqref{spincSES}, one sees $p$ is well-defined. Furthermore, 
	
	\begin{prop}
		We have the following long exact sequence: 
		\begin{align}\label{AndersonLES}
			\begin{aligned}
				\mathrm{Hom}\bigl(\Omega^{\mathrm{Spin}^c}_{n-1}(X,\tau),\mathbb{R}/\mathbb{Z}\bigr)
				&\xrightarrow{p}
				\Bigl(\widehat{I\Omega^{\Spinc}_{\mathrm{dR}}}\Bigr)^{n}(X,\widehat{\tau})\big/\im\bigl(a_{I\Omega}\bigr)
				\\
				&\xrightarrow{\ch'}
				\mathrm{Hom}\bigl(\Omega^{\mathrm{Spin}^c}_{n}(X,\tau),\mathbb{R}\bigr)
				\xrightarrow{q}
				\mathrm{Hom}\bigl(\Omega^{\mathrm{Spin}^c}_{n}(X,\tau),\mathbb{R}/\mathbb{Z}\bigr),
			\end{aligned}
		\end{align}
		where $q$ is induced by $\modZ$.
	\end{prop}
	
	\begin{proof}
		We check the exactness at
		\(\mathrm{Hom}(\Omega_n^{\Spinc}(X,\tau),\mathbb{R})\).
		Let
		\[
		[\omega]\in
		H^n(X;N^\bullet_{\Spinc},H)
		\cong
		\mathrm{Hom}\bigl(
		\Omega_n^{\Spinc}(X,\tau),\mathbb{R}
		\bigr)
		\]
		be represented by
		\(\omega\in \Omega^n_{D_H\mathrm{-clo}}(X;N^\bullet_{\Spinc})\).
		Then \(\omega\) determines a homomorphism
		\[
		\langle -, \omega \rangle
		\in
		\mathrm{Hom}\bigl(
		\Omega_n(X;V^{\Spinc}_{\bullet})/\mathrm{im}\partial_H,
		\mathbb{R}
		\bigr).
		\]
		We first show \(\im(\ch')\subset \ker q\).
		Suppose that \([\omega]\in \im(\ch')\).
		Then there exists
		\[
		h\in
		\mathrm{Hom}\bigl(
		\widehat{\Omega_{n-1}^{\Spinc}}(X,\widehat{\tau}),
		\mathbb{R}/\mathbb{Z}
		\bigr)
		\]
		such that \((\omega,h)\) satisfies the compatibility condition. 
		\[
		(a^{\Spinc})^*(h)
		=
		h\circ a^{\Spinc}
		=
		\modZ\circ \langle -, \omega\rangle.
		\]
		Since \(\mathbb{R}/\mathbb{Z}\) is divisible, applying
		\(\mathrm{Hom}(-,\mathbb{R}/\mathbb{Z})\) to the exact sequence
		\eqref{spincSES} yields a dual exact sequence
		\[
		\mathrm{Hom}\bigl(
		\widehat{\Omega_{n-1}^{\Spinc}}(X,\widehat\tau),\mathbb{R}/\mathbb{Z}
		\bigr)
		\xrightarrow{(a^{\Spinc})^*}
		\mathrm{Hom}\bigl(
		\Omega_n(X;V^{\Spinc}_{\bullet})/\mathrm{im}\partial_H,
		\mathbb{R}/\mathbb{Z}
		\bigr)
		\xrightarrow{(\ch'^{\Spinc})^*}
		\mathrm{Hom}\bigl(
		\Omega_n^{\Spinc}(X,\tau),\mathbb{R}/\mathbb{Z}
		\bigr).
		\]
		Hence the image of \(\modZ\circ \langle -, \omega\rangle\) vanishes in
		$
		\mathrm{Hom}\bigl(
		\Omega_n^{\Spinc}(X,\tau),\mathbb{R}/\mathbb{Z}
		\bigr)
		$. 
		Therefore \(\im(\ch')\subset \ker q\).
		
		Conversely, suppose that \([\omega]\in \ker q\).
		Then the image of
		\(\modZ\circ \langle -, \omega\rangle\) in
		$
		\mathrm{Hom}\bigl(
		\Omega_n^{\Spinc}(X,\tau),\mathbb{R}/\mathbb{Z}
		\bigr)
		$
		is zero. By the same exact sequence above, there exists
		\[
		h\in
		\mathrm{Hom}\bigl(
		\widehat{\Omega_{n-1}^{\Spinc}}(X,\widehat{\tau}),
		\mathbb{R}/\mathbb{Z}
		\bigr)
		\]
		such that the compatibility condition holds
		\[
		h\circ a^{\Spinc}
		=
		\modZ\circ \langle -, \omega\rangle.
		\]
		Hence the pair  \((\omega,h)\) defines a well-defined element
		\[
		I_{I\Omega}(\omega,h)
		\in
		\bigl(
		\widehat{I\Omega^{\Spinc}_{\mathrm{dR}}}
		\bigr)^n(X,\widehat{\tau})
		/\im(a_{I\Omega}).
		\]
		By construction,
		\[
		\ch'\bigl(I_{I\Omega}(\omega,h)\bigr)=[\omega].
		\]
		Therefore \(\ker q\subset \im(\ch')\), proving the exactness. 
		The exactness elsewhere
		is checked in the same way as in \cite{Yam23}. 
	\end{proof}
	
	With the above result, we may identify the quotient $	\bigl(
	\widehat{I\Omega^{\Spinc}_{\mathrm{dR}}}
	\bigr)^{n}(X, \widehat{\tau})
	/\im (a_{I\Omega})$ with the topological Anderson dual ${(I\Omega^{\mathrm{Spin}^c})^n(X,\tau)}$, thus the notation $I_{I\Omega}$ is justified.  
	\begin{prop}\label{lem:pic}
		We have an isomorphism
		\[
		\bigl(
		\widehat{I\Omega^{\Spinc}_{\mathrm{dR}}}
		\bigr)^{n}(X, \widehat{\tau})
		/\im (a_{I\Omega})\cong 
		{(I\Omega^{\mathrm{Spin}^c})^n(X,\tau)} . 
		\]
	\end{prop}

	\begin{proof}
		
		By picturing the two exact sequences 
		(\ref{AndersonSES_twistedSpinc}) and (\ref{AndersonLES}) into the same 
		diagram, 
		% https://q.uiver.app/#q=WzAsMTAsWzAsMCwiMCJdLFswLDEsIjAiXSxbMSwwLCJcXG1hdGhybXtFeHR9KFxcT21lZ2Ffe24tMX1ee1xcbWF0aHJte1NwaW59XmN9KFgsXFx0YXUpLFxcbWF0aGJie1p9KSJdLFsyLDAsIlxcYmlnbChcXHdpZGVoYXR7SVxcT21lZ2Fee1xcbWF0aHJte1NwaW59XmN9X3tcXG1hdGhybXtkUn19fSBcXGJpZ3IpXntufShYLCBcXHdpZGVoYXR7XFx0YXV9KSAvXFxtYXRocm17aW19IChhX3tJXFxPbWVnYX0pIl0sWzEsMSwiXFxtYXRocm17RXh0fShcXE9tZWdhX3tuLTF9XntcXG1hdGhybXtTcGlufV5jfShYLFxcdGF1KSxcXG1hdGhiYntafSkiXSxbMiwxLCIoSVxcT21lZ2Fee1xcbWF0aHJte1NwaW59XmN9KV5uKFgsXFx0YXUpIl0sWzMsMCwiXFxtYXRocm17SG9tfShcXE9tZWdhX3tufV57XFxtYXRocm17U3Bpbn1eY30oWCxcXHRhdSksXFxtYXRoYmJ7Wn0pIl0sWzMsMSwiXFxtYXRocm17SG9tfShcXE9tZWdhX3tufV57XFxtYXRocm17U3Bpbn1eY30oWCxcXHRhdSksXFxtYXRoYmJ7Wn0pIl0sWzQsMCwiMCJdLFs0LDEsIjAiXSxbMiwzXSxbNCw1XSxbMiw0LCJcXG1hdGhybXtpZH0iXSxbMyw1LCJGIl0sWzMsNl0sWzUsN10sWzYsNywiXFxtYXRocm17aWR9Il0sWzYsOF0sWzcsOV0sWzAsMl0sWzEsNF1d
		\[\begin{tikzcd}[column sep=tiny]
			0 & {\mathrm{Ext}(\Omega_{n-1}^{\mathrm{Spin}^c}(X,\tau),\mathbb{Z})} & {\bigl(\widehat{I\Omega^{\mathrm{Spin}^c}_{\mathrm{dR}}} \bigr)^{n}(X, \widehat{\tau}) /\mathrm{im} (a_{I\Omega})} & {\mathrm{Hom}(\Omega_{n}^{\mathrm{Spin}^c}(X,\tau),\mathbb{Z})} & 0 \\
			0 & {\mathrm{Ext}(\Omega_{n-1}^{\mathrm{Spin}^c}(X,\tau),\mathbb{Z})} & {(I\Omega^{\mathrm{Spin}^c})^n(X,\tau)} & {\mathrm{Hom}(\Omega_{n}^{\mathrm{Spin}^c}(X,\tau),\mathbb{Z})} & 0
			\arrow[from=1-1, to=1-2]
			\arrow[from=1-2, to=1-3]
			\arrow["{\mathrm{id}}", from=1-2, to=2-2]
			\arrow[from=1-3, to=1-4]
			\arrow["F", from=1-3, to=2-3]
			\arrow[from=1-4, to=1-5]
			\arrow["{\mathrm{id}}", from=1-4, to=2-4]
			\arrow[from=2-1, to=2-2]
			\arrow[from=2-2, to=2-3]
			\arrow[from=2-3, to=2-4]
			\arrow[from=2-4, to=2-5]
		\end{tikzcd}\]
		we construct a middle vertical $F$ such that the diagram commutes, following a similar Picard groupoid argument in \cite{Yam1, Yam23}. Consider the Picard groupoid 
		\[
		\big(
		\Omega _n(X;V^{\mathrm{Spin}^c}_{\bullet})
		/\im \partial_H
		\xrightarrow{a^{\Spinc}}
		\widehat{
			\Omega^{\mathrm{Spin}^c}_{n-1}}(X, \widehat{\tau})
		\big)
		\]
		whose objects are in
		$\widehat{
			\Omega^{\mathrm{Spin}^c}_{n-1}}(X, \widehat{\tau})$ 
		and morphisms are given by $x_0\xrightarrow{y} x_1$ for each $x_1-x_0=a^{\Spinc}(y)$. 
		
		For
		$
		(\omega, h)\in
		\bigl(
		\widehat{
			I\Omega^{\Spinc}_{\mathrm{dR}}
		}
		\bigr)^{n}(X, \widehat{\tau})
		$, 
		there is an associated functor of Picard groupoids
		\[
		\widetilde F(\omega,h):
		\big(
		\Omega _n(X;V^{\mathrm{Spin}^c}_{\bullet})
		/\im \partial_H
		\xrightarrow{a^{\Spinc}}
		\widehat{
			\Omega^{\mathrm{Spin}^c}_{n-1}}(X, \widehat{\tau})
		\big)
		\to
		\big(
		\mathbb{R}
		\xrightarrow{\modZ}
		\mathbb{R}/\mathbb{Z}
		\big)
		\]
		by applying $h$ on objects and $\omega$ on morphisms. The well-definedness is due to the compatibility condition. 
		Moreover, for two elements $(\omega,h)$ and $(\omega',h')$ differ by an image $a_{I\Omega}(\alpha)$, we have the natural transformation
		\[
		\langle R^{\Spinc}(-),\alpha\rangle: 
		\widetilde F(\omega,h)\Rightarrow\widetilde F(\omega',h'). 
		\]
		By the property of Picard groupoids \cite{HS}, 
		we have a natural equivalence
		\[
		\big(
		\Omega _n(X;V^{\mathrm{Spin}^c}_{\bullet})
		/\im \partial_H
		\xrightarrow{a^{\Spinc}}
		\widehat{
			\Omega^{\mathrm{Spin}^c}_{n-1}
		}(X, \widehat{\tau})
		\big)
		\simeq(\ker(a^{\Spinc})
		\xrightarrow{0}
		\coker(a^{\Spinc})).
		\]
		By the exactness of~\eqref{spincSES}, 
		we have isomorphisms: 
		\[
		\ker(a^{\Spinc})
		\simeq 
		\im(
		\Omega^{\mathrm{Spin}^c}_{n}(X, \tau)
		\xrightarrow{\ch'^{\Spinc}}
		H_{n}(X;V^{\Spinc}_{\bullet}, H)
		),
		\quad
		\coker(a^{\Spinc})
		\simeq
		\Omega^{\mathrm{Spin}^c}_{n-1}(X, \tau)
		.
		\]
		
		Summarizing, we have defined the homomorphism
		\[
		\widetilde F: 
		\bigl(
		\widehat{I\Omega^{\Spinc}_{\mathrm{dR}}}
		\bigr)^{n}(X, \widehat{\tau})
		/\im (a_{I\Omega})
		\to \pi_{0}\FunPic
		\big(
		\big(
		\im(\ch'^{\Spinc})
		\xrightarrow{0}
		\Omega^{\Spinc}_{n-1}(X, \tau)
		\big)
		\to
		\big(
		\mathbb{R}
		\xrightarrow{\modZ}
		\mathbb{R}/\mathbb{Z}
		\big)						
		\big)
		,
		\]
		
		Now we construct a functor of Picard groupoids
		\begin{align}\label{pic_func}
			\pi_{\le1}
			L
			(P_{\tau}(MT\Spinc)/X)
			_{1-n}
			\to
			\big(
			\im(\ch'^{\Spinc})
			\xrightarrow{0}
			\Omega^{\mathrm{Spin}^c}_{n-1}(X, \tau)
			\big).
		\end{align}
		Let $S\mathbb{R}/\mathbb{Z}$ denote the Moore spectrum for $\mathbb{R}/\mathbb{Z}$,
		and set $J$ to be the homotopy cofiber:
		\[
		J:=
		\mathrm{hoCofib}\big(
		\Sigma^{-1}(
		P_{\tau}(MT\Spinc)/X
		\wedge S\mathbb{R}/\mathbb{Z
		})\langle n\rangle)\to 
		P_{\tau}(MT\Spinc)/X
		\big),
		\]
		where $(P_{\tau}(MT\Spinc)/X
		\wedge S\mathbb{R}/\mathbb{Z})\langle n\rangle$ is the $n$-connected cover of 
		$P_{\tau}(MT\Spinc)/X\wedge S\mathbb{R}/\mathbb{Z}$. 
		By Proposition~\ref{prop: dRSpinc}, we have $$
		\pi_{n}(J)
		\simeq 
		H_{n}(X;V^{\Spinc}_{\bullet}, H),
		\quad
		\pi_{n-1}(J)
		\simeq 
		\Omega^{\Spinc}_{n-1}(X, \tau).
		$$
		Since $\pi_{n}(J)$ is a real vector space,  the $k$-invariant for the Picard groupoid $\pi_{\le1}(LJ_{1-n})$ 
		vanishes. Thus we have the equivalence of Picard groupoids
		\[
		\pi_{\le1}(LJ_{1-n})
		\simeq
		\big(
		H_{n}(X;V^{\Spinc}_{\bullet}, H)
		\xrightarrow{0}
		\Omega^{\Spinc}_{n-1}(X, \tau)
		\big),
		\]
		Precomposing with the functor induced by $P_{\tau}(MT\Spinc)/X\to J$, 
		\[
		\pi_{\le1}
		L
		(P_{\tau}(MT\Spinc)/X)
		_{1-n}
		\to
		\pi_{\le1}(LJ_{1-n})
		\simeq
		\big(
		H_{n}(X;V^{\Spinc}_{\bullet}, H)
		\xrightarrow{0}
		\Omega^{\Spinc}_{n-1}(X, \tau)
		\big),
		\]
		we arrive at the desired functor~\eqref{pic_func}. 
		By construction, this functor induces the identity map on $\pi_{0}$, and the twisted homological Chern--Dold character~\eqref{eq: ch'^Spinc} 
		\[
		\ch'^{\Spinc}\colon \Omega^{\Spinc}_n(X,\tau)\longrightarrow H_{n}(X;V^{\Spinc}_{\bullet},H)
		\]
		on $\pi_{1}$. 
		This further induces a homomorphism
		% https://q.uiver.app/#q=WzAsNCxbMCwyLCJcXHBpX3swfVxcbWF0aHJte0Z1blBpY30gXFxiaWcoXFxwaV97XFxsZTF9IFx0TCAoUF97XFx0YXV9KE1cXG1hdGhybXtTcGlufV5jKS9YKV97MS1ufSBcdFxcdG8gXFxiaWcoXFxtYXRoYmJ7Un1cXHhyaWdodGFycm93e1xcbWF0aHJte21vZH1cXG1hdGhiYntafX1cXG1hdGhiYntSfS9cXG1hdGhiYntafVxcYmlnKVxcYmlnKSJdLFswLDEsIlxccGlfezB9XFxtYXRocm17RnVuUGljfSBcXGJpZyhcXGJpZyhcXG1hdGhybXtpbX0oXFxjaCkgXHRcXHhyaWdodGFycm93ezB9IFxcT21lZ2Fee1xcbWF0aHJte1NwaW59XmN9X3tuLTF9KFgsIFxcdGF1KSBcXGJpZykgXHRcXHRvIFxcYmlnKFxcbWF0aGJie1J9XFx4cmlnaHRhcnJvd3tcXG1hdGhybXttb2R9XFxtYXRoYmJ7Wn19XFxtYXRoYmJ7Un0vXFxtYXRoYmJ7Wn1cXGJpZylcXGJpZykiXSxbMCwzLCJcXGJpZyggXHRJXFxPbWVnYV57XFxtYXRocm17U3Bpbn1eY31fe1xcbWF0aHJte0hTfX1cXGJpZ3IpXntufSAoWCwgXFx0YXUpIl0sWzAsMCwiXFxiaWdsKFxcd2lkZWhhdHtJXFxPbWVnYV57XFxtYXRocm17U3Bpbn1eY31fe1xcbWF0aHJte2RSfX19IFxcYmlncilee259KFgsIFxcd2lkZWhhdHtcXHRhdX0pIC9cXG1hdGhybXtpbX0gKGFfe0lcXE9tZWdhfSkiXSxbMSwwXSxbMCwyXSxbMywxXV0=
		\[
		\begin{aligned}
			&F: 	\bigl(\widehat{I\Omega^{\mathrm{Spin}^c}_{\mathrm{dR}}}\bigr)^{n}(X,\widehat{\tau})/\mathrm{im}(a_{I\Omega})
			\\
			&\to
			\pi_{0}\mathrm{FunPic}\Bigl(\bigl(\mathrm{im}(\ch'^{\Spinc})\xrightarrow{0}\Omega^{\mathrm{Spin}^c}_{n-1}(X,\tau)\bigr)\to\bigl(\mathbb{R}\xrightarrow{\mathrm{mod}\mathbb{Z}}\mathbb{R}/\mathbb{Z}\bigr)\Bigr)
			\\
			&\to
			\pi_{0}\mathrm{FunPic}\Bigl(\pi_{\le 1}L
			\bigl(
			P_{\tau}(MT\mathrm{Spin}^c)/X\bigr)_{1-n}\to\bigl(\mathbb{R}\xrightarrow{\mathrm{mod}\mathbb{Z}}\mathbb{R}/\mathbb{Z}\bigr)\Bigr)
			\\
			&\to
			\bigl(I\Omega^{\mathrm{Spin}^c}
			\bigr)^{n}(X,\tau).
		\end{aligned}
		\]
		The compatibility of $F$ and the desired  diagram is clear from construction. 
		Hence by the five lemma, we have 
		\[
		F: 
		\bigl(
		\widehat{I\Omega^{\Spinc}_{\mathrm{dR}}}
		\bigr)^{n}(X, \widehat{\tau})
		/\im (a_{I\Omega})
		\xrightarrow{\cong} 
		{(I\Omega^{\mathrm{Spin}^c})^n(X,\tau)}
		\]
		is an isomorphism. 
		This concludes the proof. 
	\end{proof}
	
	The last result we need for the proof of Theorem~\ref{thm:diffio} is a dual statement of Proposition~\ref{prop: dRSpinc}.
	
	\begin{prop}\label{lem: dRIO}
		Tensoring with $\mathbb{R}$,
		\[
		\ch'_{I\Omega}\otimes\mathbb{R}\colon (I\Omega^{\mathrm{Spin}^c})^n(X,\tau)\otimes\mathbb{R}\rightarrow H^n(X;N_{\Spinc}^\bullet,H)
		\]
		is a natural isomorphism.
	\end{prop}
	
	\begin{proof}
		From \eqref{AndersonSES_twistedSpinc} we have a short exact sequence
		\[
		0\rightarrow \mathrm{Ext}\bigl(\Omega^{\mathrm{Spin}^c}_{n-1}(X,\tau),\mathbb{Z}\bigr)\rightarrow (I\Omega^{\mathrm{Spin}^c})^n(X,\tau)\rightarrow \mathrm{Hom}\bigl(\Omega^{\mathrm{Spin}^c}_n(X,\tau),\mathbb{Z}\bigr)\rightarrow 0.
		\]
		The twisted Atiyah-Hirzebruch spectral sequence shows $\Omega^{\mathrm{Spin}^c}_k(X,\tau)$ is finitely generated for all $k$. Therefore $\mathrm{Ext}(\Omega^{\mathrm{Spin}^c}_{n-1}(X,\tau),\mathbb{Z})$ is torsion, so by tensoring with $\mathbb{R}$, one has
		\[
		(I\Omega^{\mathrm{Spin}^c})^n(X,\tau)\otimes\mathbb{R}\cong \mathrm{Hom}\bigl(\Omega^{\mathrm{Spin}^c}_n(X,\tau)\otimes\mathbb{R},\mathbb{R}\bigr).
		\]
		By Proposition~\ref{prop: dRSpinc} we have a natural isomorphism
		\[
		\ch'^{\Spinc}\otimes\mathbb{R}\colon \Omega^{\Spinc}_n(X,\tau)\otimes\mathbb{R}\xrightarrow{\cong} H_n\bigl(X;V^{\mathrm{Spin}^c}_\bullet,H\bigr).
		\]
		Thus
		\[
		(I\Omega^{\mathrm{Spin}^c})^n(X,\tau)\otimes\mathbb{R}\cong \mathrm{Hom}\bigl(H_n(X;V^{\mathrm{Spin}^c}_\bullet,H),\mathbb{R}\bigr).
		\]
		Recall in Section~\ref{sec: chaincpx} we have shown that the complexes
		\[
		(\Omega_\ast(X;V^{\mathrm{Spin}^c}_\bullet),\partial_H)\quad
		\text{and} \quad (\Omega^\ast(X;N_{\Spinc}^\bullet),D_H)
		\]
		are continuous duals, where $\partial_H$ and $D_H$ are adjoints under the evaluation pairing. Since in each total degree the coefficients are finite dimensional, the evaluation pairing is perfect and yields
		\[
		H^n
		\bigl(
		X;N_{\Spinc}^\bullet,H
		\bigr)\cong \mathrm{Hom}\bigl(H_n(X;
		V^{\Spinc}_\bullet,H),\mathbb{R}\bigr),
		\]
		which concludes the proof.
	\end{proof}
	
	Now we summarize the proof for the main theorem of this section.
	
	\begin{proof}[Proof of Theorem~\ref{thm:diffio}]
		We only need to verify the properties (i)-(iii) in Definition~\ref{defi:diffextco}.
		The isomorphism (i) is given by Proposition~\ref{lem: dRIO}. 
		The commutativity condition (ii) follows from Proposition~\ref{lem:pic}, and exactness condition (iii) follows directly from construction. 
		Hence \eqref{diffIOdata2} is a differential extension of the Anderson dual to twisted $\Spinc$-bordism, in the sense of Definition~\ref{defi:diffextco}.
	\end{proof}
	
	\medskip

	\subsection{Differential twisted $\Spinc$-cobordism}\label{sec: cob}
	
	For differential \textit{co}bordism, 
	Bunke--Schick--Schr\"oder--Wiethaup~\cite{bunke2009landweber} construct concrete differential cocycle models for $MU$ and Landweber-exact $MU_*$-modules, and their constructions are generalized to cobordism theories of any (tangential) structure group by Yamashita--Yonekura \cite{Yam1}.
	Following their constructions, we describe a \textit{twisted} cocycle model for differential twisted $\Spinc$-cobordism theory in this subsection.  
	
	We begin by recalling the notion of
	stable relative tangent bundles~\cite{Yam1}. 
	Let $p:N\to X$ be a map of relative dimension $r=\dim N-\dim X$. 
	Choose a bundle map
	\[
	\phi:\underline{\mathbb{R}}^k\to p^*TX
	\]
	such that
	\[
	\phi\oplus dp:\underline{\mathbb{R}}^k\oplus TN\to p^*TX
	\]
	is surjective.  We define \emph{the stable relative tangent bundle for $p$ associated to $\phi$} to be the stable vector bundle over $N$ represented by the following rank $(k+r)$ bundle: 
	\[
	T(\phi, p):=
	\ker(
	\phi\oplus dp:
	\underline{\mathbb{R}}^k\oplus
	TN\to p^*TX
	). 
	\]
	We now formulate the definition for a geometric twisted stable relative tangential $\Spinc$-cochain.
	
	\begin{defi}\label{def:relSpincCochain}
		A geometric $\widehat{\tau}$-twisted \emph{stable relative tangential} $\Spinc$-cochain over $X$ of relative dimension $r$ is a quadruple
		\[
		(N,p,f^{\nabla}_{T(\phi,p)},\widehat{\eta}),
		\]
		where
		\begin{itemize}
			\item $N$ is an oriented Riemannian $(\dim X+r)$-manifold with boundary, equipped with a collar embedding of $\partial N$, along which all data are assumed to be constant;
			\item $p\colon N\to X$ is a  proper map of relative dimension $r$;
			\item $f^{\nabla}_{T(\phi,p)}\colon N\to B_{\nabla}\SO$ is the classifying map for the {stable relative tangent bundle} with its chosen connection; 
			\item $\widehat{\eta}\in\difStruc{T(\phi,p)}$ is a differential $\widehat{\tau}$-twisted $\Spinc$-structure on $T(\phi,p)$.
		\end{itemize}
	\end{defi}
	\noindent
	An isomorphism $
	(N,p,f^{\nabla}_{T(\phi,p)},\widehat{\eta})\to (N',p',f^{\nabla}_{T(\phi',p')},\widehat{\eta}')
	$
	is an orientation and collar-preserving isometry $h\colon N\to N'$,
	such that $p= p'\circ h$, and the differential twisted
	$\Spinc$-structures are identified by a 2-morphism
	$W_3^\nabla
	\circ
	(dh)
	\circ \widehat\eta
	\Longrightarrow
	h^*\widehat\eta'
	$
	in $B^2_{\nabla}\uU(1)(N)$ relative to boundary collars. 
	In particular, we have $\kappa(\widehat{\eta})=h^{*}\kappa(\widehat{\eta}')$. 
	The collection of
	geometric $\widehat{\tau}$-twisted
	stable relative tangential
	$\Spinc$-cochains over $X$
	of relative dimension $r$ forms an abelian group under disjoint union, which we denote by $\widetilde{C}^{-r}_{\Spinc}
	(X, \widehat{\tau})$. 
	
	Consider a differential twisted stable relative tangential $\Spinc$-cochain over $X\times \mathbb{R}$
	\[
	\big(
	W, q, f^\nabla_{T(\phi, q)},\widehat\eta
	\big), 
	\]		
	such that $q$ is proper on the restriction to $X\times (-\infty,0]$ and transverse to $0$.
	Denote $W_0:={q^{-1}(
		X   \times \{0\}
		)}$, there is an induced differential
	$\widehat{\tau}$-twisted stable relative tangential $\Spinc$-cochain
	\[
	\big(
	W_0,  {q}|_{W_0},f^\nabla_{T(\phi, q)}|_{W_0}, \widehat\eta|_{W_0}
	\big).
	\]
	Such a cochain is called a \emph{geometric bordism datum}.

	As in the bordism case, 
	we need to introduce currents for the twisted Chern--Weil construction.
	Recall the de Rham $i$-currents are defined as continuous functionals on compactly supported $(\dim X-i)$-forms:
	\[
	\Omega^{i}_{-\infty}(X)
	:=
	\Hom_{\cts} \big(
	\Omega^{\dim X-i}_c(X),\mathbb{R}
	\big). 
	\]
	The current differential $b: \Omega^{i}_{-\infty}(X) \to \Omega^{i+1}_{-\infty}(X)$ is characterized as follows
	\[
	\big\langle bT, \omega \big\rangle
	=
	(-1)^{|T|+1}
	\big\langle T, d\omega \big\rangle,
	\quad
	\omega \in 
	\Omega^{\dim X-i-1}_{c}(X).
	\]
	The natural product
	\begin{align}\label{current_wedge}
		\wedge: 
		\Omega^{j}(X)
		\otimes
		\Omega^{i}_{-\infty}(X)
		\rightarrow
		\Omega^{i+j}_{-\infty}(X)
	\end{align}
	satisfies the Leibniz rule
	\begin{align}\label{leibniz_b}
		b(\alpha \wedge T) = 
		d\alpha \wedge T
		+
		(-1)^{|\alpha|} 
		\alpha \wedge (bT). 
	\end{align}
	
	\medskip
	
	When studying \textit{bordism} theory in Section~\ref{sec: chaincpx}, we used the chain complex of \emph{compactly supported currents} 
	$
	\big(
	\Omega_{i}(X), \partial
	\big)$
	whose homology computes the real homology of $X$. 
	Now in the case of \textit{cobordism}, we need the cochain complex of \emph{currents} 
	$
	\big(
	\Omega_{-\infty}^i(X), b
	\big)
	$ whose cohomology computes the real cohomology of $X$. We write	
	\[
	\Omega^{-r}_{-\infty}(X;V^{\bullet}_{\Spinc})
	:=\bigoplus_{i+j=-r}\Omega^{i}_{-\infty}(X)\otimes V^{\Spinc}_{-j}
	\cong
	\Hom_{\cts} \big(
	\Omega^{\dim X+r}_c(X;N_{\Spinc}^{\bullet}),\mathbb{R}\big), 
	\]
	and deform the current differential by
	\[
	\delta_H: = b - H\wedge(u \times -)
	:\Omega^{-r}_{-\infty}(X;V^{\bullet}_{\Spinc})
	\rightarrow
	\Omega^{-r+1}_{-\infty}(X;V^{\bullet}_{\Spinc}).
	\]
	Similarly, we deform the differential for the complex of forms with coefficients 
	\[
	d_H:=d-H\wedge (u\times -)\colon
	\Omega^{-\ast}(X;V^\bullet_{\Spinc})
	\rightarrow
	\Omega^{-\ast+1}(X;V^\bullet_{\Spinc}).
	\]
	There is a natural inclusion
	\[
	\iota \colon \Omega^{i}(X;V^\bullet_{\Spinc})
	\hookrightarrow \Omega^{i}_{-\infty}(X;V^\bullet_{\Spinc}),
	\]
	sending an $i$-form $\tau$ to a functional 
	$\int_X (\tau\wedge-)$ acting on compactly supported $(\dim X-i)$-forms. One may check that $\iota$ is a chain map between the twisted complexes. Moreover, as shown in the following lemma, $\iota$
	is a quasi-isomorphism. 
	
	\begin{lem}\label{lem:quasi_iso}
		$\iota$ induces an isomorphism on cohomology groups
		\[
		H^{-\ast}\bigl(\Omega^{-\ast}(X;V^\bullet_{\Spinc}),d_H\bigr)
		\stackrel{\cong}{\longrightarrow}
		H^{-\ast}\bigl(\Omega^{-\ast}_{-\infty}(X;V^\bullet_{\Spinc}),\delta_H\bigr).
		\]
	\end{lem}
	
	\begin{proof}
		Filtering by the coefficient degree in
		$V^\bullet_{\Spinc}$, 
		we equip both complexes with the increasing filtrations
		\[
		F^p\Omega^{n}(X;V^\bullet_{\Spinc})
		:=
		\bigoplus_{\substack{i+j=n\\ j\leq p}}
		\Omega^i(X)\otimes V^{j}_{\Spinc},
		\]
		and
		\[
		F^p\Omega^{n}_{-\infty}(X;V^\bullet_{\Spinc})
		:=
		\bigoplus_{\substack{i+j=n\\ j\leq p}}
		\Omega^i_{-\infty}(X)\otimes V^{j}_{\Spinc},
		\]
		which are clearly preserved by $d_H$ and $\delta_H$. 		
		Let $E_r^{p,q}$ and $ \widetilde E_r^{p,q}$ be the spectral sequences associated to the above
		filtrations on $\Omega^{-\ast}(X;V^\bullet_{\Spinc})$ and
		$\Omega^{-\ast}_{-\infty}(X;V^\bullet_{\Spinc})$, respectively. 
		By construction, the $E_0$-pages are
		\[
		E_0^{p,q}\cong \Omega^q(X)\otimes V^p_{\Spinc},
		\qquad
		\widetilde E_0^{p,q}\cong \Omega^q_{-\infty}(X)\otimes V^p_{\Spinc},
		\]
		where the zeroth differentials are induced by $d$ and $b$, respectively. 
		It follows that
		\[
		E_1^{p,q}\cong H^q\bigl(\Omega^\ast(X),d\bigr)\otimes V^p_{\Spinc},
		\qquad
		\widetilde E_1^{p,q}\cong H^q\bigl(\Omega^\ast_{-\infty}(X),b\bigr)\otimes V^p_{\Spinc}.
		\]
		Since the natural inclusion $\iota$ also preserves the filtrations, it induces a morphism of spectral
		sequences
		\[
		\iota_r \colon E_r^{p,q}\longrightarrow \widetilde E_r^{p,q}.
		\]
		On the $E_0$-page, this is given by
		$
		\Omega^\ast(X)\hookrightarrow \Omega^\ast_{-\infty}(X)
		$
		tensored with $V^p_{\Spinc}$, which is a
		quasi-isomorphism by de Rham regularization \cite{dR}. Therefore
		\[
		\iota_1 \colon E_1^{p,q}\stackrel{\cong}{\longrightarrow}\widetilde E_1^{p,q}
		\]
		is an isomorphism for all $p,q$.
		Since the dimension of $X$ is finite, both spectral sequences converge strongly to the cohomology of
		\[
		\bigl(\Omega^{-\ast}(X;V^\bullet_{\Spinc}),d_H\bigr)
		\qquad\text{and}\qquad
		\bigl(\Omega^{-\ast}_{-\infty}(X;V^\bullet_{\Spinc}),\delta_H\bigr),
		\]
		respectively. The lemma then follows from the comparison theorem.
	\end{proof}

	We now describe the twisted Chern--Weil construction for twisted $\Spinc$-cobordism.
	For a
	differential $\widehat{\tau}$-twisted stable relative tangential $\Spinc$-cochain
	$(N, p, f^\nabla_{T(\phi,p)}, \widehat\eta)$ of relative dimension $r$, 
	we associate a current of total degree $-r$, defined on the test form
	$\omega \otimes p_I  \zeta^k\in
	\Omega_c^{\dim X+r}(X;N_{\mathrm{Spin}^c}^{\bullet})$ by
	\[
	\cw
	(N, p, f^\nabla_{T(\phi,p)}, \widehat\eta)
	\colon
	\omega \otimes p_I  \zeta^k
	\mapsto
	\int_N
	p^*\omega \wedge 
	p_I\bigl(\nabla^{T(\phi,p)}\bigr) \wedge
	\kappa(\widehat\eta)^k. 
	\]
	Since $p$ is proper, the integrand is a compactly supported $(\dim X+r)$-form on $N$, thus the integral is well-defined. 
	Similar to the homological case, we may check that
	\begin{align} 
		\cw: \widetilde{C}_{\Spinc}^{-r}(X, \widehat{\tau})
		\xrightarrow{}
		\Omega_{-\infty}^{-r}(X;V_{\mathrm{Spin}^c}^{\bullet})
	\end{align}
	is a chain map, invariant for isomorphic chains and descends to 
	\[
	\ch'_{\Spinc}\colon
	{\Omega_{\mathrm{Spin}^c}^{-r}}(X,{\tau})
	\to
	H^{-r}(X;V^{\mathrm{Spin}^c}_{\bullet}, H).
	\]
	Moreover, we record the pre-Chern--Weil form $\cw_{\mathrm{pre}}(c)
	\in \Omega^{-r}\bigl(N;V^\bullet_{\Spinc}\bigr)$ which is characterized by
	\begin{align}\label{cwpre}
		\big\langle p_I\zeta^k,\cw_{\mathrm{pre}}(c)\big\rangle
		:=
		p_I\bigl(\nabla^{T(\phi,p)}\bigr)
		\wedge
		\kappa(\widehat\eta)^k, \quad 
		\forall
		p_I\zeta^k\in N^\bullet_{\Spinc}.
	\end{align}
	Note that the homological Chern--Weil current $\cw(c)$ on $X$
	is the pushforward of this form. 
	In the case of trivial twist, our construction recovers the Chern--Weil construction in \cite{bunke2009landweber} and \cite{Yam1}. 
	
	\begin{defi}\label{def:diffSpincCobordismCochain}
		Let $\widehat{\tau}\colon X\to B^2_{\mathrm{conn}}\uU(1)$ be a differential twist with curvature $H$.
		Define the differential $\widehat{\tau}$-twisted $\Spinc$-\emph{cobordism} group
		\[
		\widehat{\Omega_{\Spinc}^{-r}}(X,\widehat{\tau})
		:=\big\{
		(N,p,f^{\nabla}_{T(\phi,p)},\widehat{\eta},\alpha)
		\big\}\big/\sim,
		\]
		where $(N,p,f^{\nabla}_{T(\phi,p)},\widehat{\eta})$ is a geometric $\widehat{\tau}$-twisted stable relative tangential $\Spinc$-cochain over $X$ of relative dimension $r$ without boundary, and
		$
		\alpha\in \Omega^{-r-1}_{-\infty}(X;V^{\bullet}_{\Spinc})/\im\delta_H$ satisfies
		\[
		\cw(N,p,f^{\nabla}_{T(\phi,p)},\widehat{\eta})-\delta_H\alpha\in \Omega^{-r}(X;V^{\bullet}_{\Spinc})
		\subset \Omega^{-r}_{-\infty}(X;V^{\bullet}_{\Spinc}).
		\]
		The existence of such $\alpha$ is due to Lemma~\ref{lem:quasi_iso}. 
		The relation $\sim$ is generated by:
		\begin{itemize}
			\item \emph{Isomorphisms:}
			\[
			(N,p,f^{\nabla}_{T(\phi,p)},\widehat{\eta},\alpha)\sim
			(N',p',f^{\nabla}_{T(\phi',p')},\widehat{\eta}',\alpha)
			\]
			for isomorphic geometric cochains $(N,p,f^{\nabla}_{T(\phi,p)},\widehat{\eta})$ and $(N',p',f^{\nabla}_{T(\phi',p')},\widehat{\eta}')$.
			\item \emph{Additivity:} disjoint union on geometric cochains and addition on $\alpha$.
			\item \emph{Bordism:}
			\[
			\big(
			W_0,  {q}|_{W_0},f^\nabla_{T(\phi, q)}|_{W_0}, \widehat\eta|_{W_0},
			\int_{(-\infty, 0]}
			\cw(y)
			\big)
			\sim 0
			\]
			for any geometric bordism datum $y=(W,q,f^{\nabla}_{T(\phi,q)},\widehat{\eta})$.
		\end{itemize}
	\end{defi}
	\noindent
	The structure maps $R_{\Spinc}$, $I_{\Spinc}$ and $a_{\Spinc}$ can be defined analogously as in the bordism case. Set 
	\[
	\mathcal{M}^*_{\Spinc}:=
	\bigl(
	\Omega^{*}
	(X;V^{\bullet}_{\Spin^c}),
	d-H\wedge(u\times-)
	\bigr). 
	\]
	One may similarly verify that 	\[
	\bigl(
	\widehat{\Omega_{\Spinc}^{*}}
	(-,-),
	\mathcal{M}^*_{\Spinc}(-,-),  \ch'_{\Spinc}, 
	R_{\mathrm{Spin}^c}, I_{\mathrm{Spin}^c}, a_{\mathrm{Spin}^c}
	\bigr)
	\] is a differential extension to twisted $\Spinc$-cobordism theory in the sense of Definition~\ref{defi:diffextco}.

	\medskip

	\subsection{Differential multiplication and pushforward}\label{sec:mul}
	In this subsection, we give the twisted versions of differential multiplication and pushforward constructed in \cite{Yam1}. 
	For a manifold $X$, let $\widehat{\tau}_1$, $\widehat{\tau}_2$, and 
	$\widehat{\tau}_3=\widehat{\tau}_1+\widehat{\tau}_2$ be differential twists over $X$, with curvatures $H_1$, $H_2$, and $H_3=H_1+H_2$, respectively. 
	For the sake of simplicity, we assume $X$ is oriented in this section, however, our arguments remain valid in the general case when taking the orientation bundle of $X$ into account. 
	We establish the following operations
	\begin{itemize}
		\item \textbf{Differential multiplication.}
		\begin{align}\label{diffmul}
			(\widehat{I\Omega^{\Spinc}_\dR})^{n}
			(X, \widehat{\tau}_3)
			\otimes
			\widehat{\Omega_{\Spinc}^{-r}}
			(X, \widehat{\tau}_2)
			\to
			(\widehat{I\Omega^{\Spinc}_\dR})^{n-r}
			(X, \widehat{\tau}_1). 
		\end{align}
		\item \textbf{Differential pushforward.}\\
		When $p:N\to X$ is a proper submersion for a geometric $\widehat\tau_2$-twisted stable relative tangential
		$\Spinc$-cochain 
		$
		c=(N,p,f^\nabla_{T(\phi,p)},\widehat\eta)
		$
		of relative dimension $r$ over $X$, 
		\begin{align}\label{diffpush}
			{c}_*: 
			( \widehat{I\Omega^{\mathrm{Spin}^c}_{\mathrm{dR}}} )
			^{n}
			\big(
			N, p^*\widehat{\tau}_3\big) \longrightarrow
			( \widehat{I\Omega^{\mathrm{Spin}^c}_{\mathrm{dR}}} )^{n-r}(X,\widehat{\tau}_1). 
		\end{align}
		In particular, taking $N=X\times S^1$ gives an $S^1$-integration map
		\begin{align}
			\int: 
			( \widehat{I\Omega^{\mathrm{Spin}^c}_{\mathrm{dR}}} )
			^{n+1}
			\big(
			X\times S^1, \widehat{\tau}_1\big) \longrightarrow
			( \widehat{I\Omega^{\mathrm{Spin}^c}_{\mathrm{dR}}} )^{n}(X,\widehat{\tau}_1). 
		\end{align}
	\end{itemize}
	
	\medskip
	
	To begin with, we define a fiber product map for geometric chains. Let
	\begin{align*}
		c=(N,p,f^{\nabla}_{T(\phi,p)},\widehat\eta)
		\in \widetilde C^{-r}_{\Spinc}(X,\widehat\tau_2)
	\end{align*}
	be a geometric $\widehat\tau_2$-twisted stable relative tangential
	$\Spinc$-cochain of relative dimension $r$.  
	For a geometric $\widehat\tau_1$-twisted $\Spinc$-chain
	$z=(M,f,f^{\nabla}_{TM},\widehat\eta_M)$ of dimension $n-r-1$ such that
	$f$ is transverse to $p$, the fiber product
	\[
	\begin{tikzcd}
		{M\times_X N} \arrow[r] \arrow[d] & N \arrow[d,"p"] \\
		M \arrow[r,"f"] & X
	\end{tikzcd}
	\]
	is an $(n-1)$-dimensional chain over $N$. 
	Choosing a splitting
	$\underline{\mathbb R}^{k}\oplus TN=H_p\oplus T(\phi,p)$ and a Riemannian
	metric on $X$ gives the stable isomorphism
	\begin{align}\label{stable-iso-corrected-v2}
		\underline{\mathbb R}^{k}\oplus T(M\times_XN)
		\cong
		TM\oplus T(\phi,p).
	\end{align}
	The differential $\widehat\tau_1$-twisted $\Spinc$-structure on $TM$ and
	the differential $\widehat\tau_2$-twisted $\Spinc$-structure on $T(\phi,p)$
	therefore induce a differential $p^*\widehat\tau_3$-twisted
	$\Spinc$-structure on $T(M\times_XN)$ by \eqref{directsum}.  Hence we have a fiber product map
	\begin{align}\label{fiber}
		\times_X c:
		\widetilde C^{\Spinc}_{n-r-1}(X,\widehat\tau_1)_{p}
		\longrightarrow
		\widetilde C^{\Spinc}_{n-1}(N,p^*\widehat\tau_3),
	\end{align}
	where the subscript denotes the transversality condition with $p$.
	After composing the map $M\times_XN\to N$ with $p:N\to X$, we obtain a
	geometric $\widehat\tau_3$-twisted $\Spinc$-chain over $X$.  
	The above construction gives
	\begin{align}\label{eq:Pi-c-geometric}
		\Pi_c:=p_*\circ (\times_X c): 
		\widetilde C^{\Spinc}_{n-r-1}(X,\widehat\tau_1)_{p}
		\longrightarrow
		\widetilde C^{\Spinc}_{n-1}(X,\widehat\tau_3).
	\end{align}
	As in \cite{Yam1}, different choices of the splitting, the metric, and
	transverse representatives give canonically bordant chains. 
	
	\medskip
	
	Next we define a mixed product
	\[
	\wedge_\star:
	\Omega^n(X;N^\bullet_{\Spinc})\otimes
	\Omega_{-\infty}^{-r}(X;V^\bullet_{\Spinc})
	\longrightarrow
	\Omega_{-\infty}^{n-r}(X;N^\bullet_{\Spinc}),
	\]
	characterized by
	\begin{align}\label{eq:star-product-even-coeff}
		(\eta\otimes\varphi)\wedge_\star(\rho\otimes v)
		:=
		(\eta\wedge\rho)\otimes(\varphi\star v),
		\quad
		\eta\otimes\varphi\in \Omega^a(X)\otimes N^p_{\Spinc}, \ 
		\rho\otimes v\in \Omega_{-\infty}^b(X)\otimes V^q_{\Spinc}.
	\end{align}	
	The differentials are given respectively by
	\[
	D_{H_3}=d+H_3\wedge\partial_\zeta
	\]
	on $N^\bullet_{\Spinc}$-valued forms,  
	\[
	\delta_{H_2}=b-H_2\wedge(u\times-)
	\]
	on
	$V^\bullet_{\Spinc}$-valued cohomological currents, and by abusing notation,
	\[
	D_{H_1}=b+H_1\wedge\partial_\zeta
	\]
	on $N^\bullet_{\Spinc}$-valued cohomological currents. The choices of differentials are compatible in the following sense: 
	
	\begin{lem}\label{lem:leibniz}
		One has the twisted Leibniz rule
		\begin{align}\label{eq:star-leibniz-even-coeff}
			D_{H_1}(\omega\wedge_\star\theta)
			=
			(D_{H_3}\omega)\wedge_\star\theta
			+
			(-1)^n\omega\wedge_\star \delta_{H_2}\theta ,
		\end{align}
		for $	\omega\in\Omega^n(X;N^\bullet_{\Spinc})$ and
		$\theta\in\Omega_{-\infty}^m(X;V^\bullet_{\Spinc})$.
	\end{lem}
	
	\begin{proof}
		It is enough to prove the identities on pure tensors. Write
		\[
		\omega=\eta\otimes\varphi,
		\qquad
		\theta=\rho\otimes v,
		\]
		and note that $
		|\omega|\equiv |\eta|, 
		|\theta| \equiv |\rho|
		\mod 2$ since the coeffients are even. For the non-twisted differential $b$, we have
		\[
		\begin{aligned}
			b(\omega\wedge_\star\theta)
			=
			(d\omega)\wedge_\star\theta
			+
			(-1)^n\omega\wedge_\star b\theta .
		\end{aligned}
		\]
		It remains to compare the twisting terms on both sides. 
		On the left hand side, the $H_1$-twisting term is
		\begin{align}\label{eq:H-left-even}
			H_1\wedge\partial_\zeta(\omega\wedge_\star\theta)
			=
			(H_1\wedge\eta\wedge\rho)\otimes
			\bigl((\partial_\zeta\varphi)\star v\bigr).
		\end{align}
		On the right hand side, the $H_3$-twisting term from
		$(D_{H_3}\omega)\wedge_\star\theta$ is
		\[
		(H_3\wedge\eta\wedge\rho)\otimes
		\bigl((\partial_\zeta\varphi)\star v\bigr).
		\]
		The $H_2$-twisting term from
		$(-1)^n\omega\wedge_\star \delta_{H_2}\theta$ is
		\[
		\begin{aligned}
			(-1)^n\omega\wedge_\star\bigl(-H_2\wedge(u\times\theta)\bigr)
			&=
			-(-1)^n
			(\eta\wedge H_2\wedge\rho)\otimes
			\bigl(\varphi\star(u\times v)\bigr) \\
			&=
			-(-1)^n(-1)^{3n}
			(H_2\wedge\eta\wedge\rho)\otimes
			\bigl((\partial_\zeta\varphi)\star v\bigr) \\
			&=
			-(H_2\wedge\eta\wedge\rho)\otimes
			\bigl((\partial_\zeta\varphi)\star v\bigr),
		\end{aligned}
		\]
		which follows by Proposition~\ref{prop:adjointness}. 
		Since $H_3=H_1+H_2$, the total twisting term on the right hand side is
		\[
		(H_1\wedge\eta\wedge\rho)\otimes
		\bigl((\partial_\zeta\varphi)\star v\bigr),
		\]
		which agrees with~\eqref{eq:H-left-even}. This proves
		\eqref{eq:star-leibniz-even-coeff}.
	\end{proof}
	
	\medskip

	We now define the differential multiplication in the twisted setting, following
	the strategy of \cite{Yam1}. Fix a differential $\widehat{\tau_2}$-twisted $\Spinc$-cocycle $(c,\alpha)$ of relative dimension $r$, and let
	\[
	(\omega,h)
	\in
	\bigl(\widehat{I\Omega^{\Spinc}_{\dR}}\bigr)^n(X,\widehat\tau_3),
	\]
	where the $D_{H_3}$-closed form $\omega$ and
	$
	h:\widehat{\Omega^{\Spinc}_{n-1}}(X,\widehat\tau_3)
	\rightarrow \mathbb R/\mathbb Z
	$
	satisfies the compatibility condition. We set
	\begin{align}\label{eq:star-product-definition}
		\omega_{(c,\alpha)}
		&:=
		\omega\wedge_\star R_{\Spinc}(c,\alpha),
		\\
		h_{(c,\alpha)}(z,\phi)
		&:=
		h\bigl(\Pi_c(z),0\bigr)
		-\bigl\langle \phi,
		\omega\wedge_\star R_{\Spinc}(c,\alpha)\bigr\rangle
		-(-1)^{|\omega|}
		\bigl\langle
		\cw(z),
		\omega\wedge_\star\alpha
		\bigr\rangle
		\quad \mod \mathbb Z .
	\end{align}
	where $(z,\phi)$ is a test differential
	$\widehat\tau_1$-twisted $\Spinc$-cycle of dimension $n-r-1$, and the term $\big\langle
	\cw(z),
	\omega\wedge_\star\alpha
	\big\rangle$ is defined by using 
	the currential generalization of Chern--Weil evaluation, as in \cite[Sec. 5.2]{Yam1}.

	\begin{prop}\label{prop:star-product-well-defined}
		The element $
		\bigl(
		\omega_{(c,\alpha)},
		h_{(c,\alpha)}
		\bigr)
		$ defines an element of
		$
		\bigl(\widehat{I\Omega^{\Spinc}_{\dR}}\bigr)^{n-r}
		(X,\widehat\tau_1)
		$, which depends only on the differential cobordism class of $(c,\alpha)$ and the
		Anderson class of $(\omega,h)$. Hence this construction gives the desired  map
		\[
		(\widehat{I\Omega^{\Spinc}_\dR})^{n}
		(X, \widehat{\tau}_3)
		\otimes
		\widehat{\Omega_{\Spinc}^{-r}}
		(X, \widehat{\tau}_2)
		\to
		(\widehat{I\Omega^{\Spinc}_\dR})^{n-r}
		(X, \widehat{\tau}_1). 
		\]
	\end{prop}
	
	\begin{proof}
		The form component is $D_{H_1}$-closed by
		Lemma~\ref{lem:leibniz}.  
		We now check that $h_{(c,\alpha)}$ is a well-defined homomorphism on the
		differential twisted bordism group. Additivity follows from the additivity of
		$\Pi_c$, $\cw$, and $h$. Invariance under isomorphisms follows from the
		isomorphism invariance of $\cw$ and the fact that in~\eqref{eq:Pi-c-geometric} different transverse
		representatives give bordant
		geometric chains. It remains to check invariance under the bordism relation
		$
		(\partial y,0)\sim (\emptyset,-\cw(y))
		$
		for a geometric $\widehat\tau_1$-twisted $\Spinc$-chain $y$.
		Indeed,
		\begin{equation}\label{eq:h_bord_check}
			\begin{aligned}
				h_{(c,\alpha)}(\partial y,0)
				&=
				h(\Pi_c(\partial y),0)
				-(-1)^n
				\bigl\langle
				\cw(\partial y),
				\omega\wedge_\star\alpha
				\bigr\rangle
				\quad \mod \mathbb Z
				\\
				&=
				\bigl\langle\cw(\Pi_c(y)),\omega\bigr\rangle
				-
				\bigl\langle
				\cw(y),
				\omega\wedge_\star\delta_{H_2}\alpha
				\bigr\rangle
				\quad \mod \mathbb Z .
			\end{aligned}
		\end{equation}
		Here the second equality uses the compatibility condition for $(\omega,h)$, $\Pi_c(\partial y)=\partial\Pi_c(y)$, and the identity
		\begin{align}\label{eq:currential-stokes-star}
			\bigl\langle
			\cw(\partial y),
			\omega\wedge_\star\alpha
			\bigr\rangle
			\equiv
			(-1)^n
			\bigl\langle
			\cw(y),
			\omega\wedge_\star\delta_{H_2}\alpha
			\bigr\rangle
			\quad \mod \mathbb Z ,
		\end{align}
		which follows from $\cw(\partial y)=\partial_{H_1}\cw(y)$ and Lemma~\ref{lem:leibniz}.

		For the first term in~\eqref{eq:h_bord_check}, we have 
		\begin{align}\label{eq:currential-fiber-star}
			\bigl\langle\cw(\Pi_c(y)),\omega\bigr\rangle 
			=
			\bigr\langle 
			\cw(y),
			\omega\wedge_\star\cw(c)
			\bigr\rangle . 
		\end{align}		
		This follows from~\cite[(5.44)]{Yam1} and the naturality of $\Spinc$-characteristic classes. 
		Combining~\eqref{eq:h_bord_check},  \eqref{eq:currential-fiber-star}, and $R_{\Spinc}(c,\alpha)=\cw(c)-\delta_{H_2}\alpha$, we get
		\[
		\begin{aligned}
			h_{(c,\alpha)}(\partial y,0)
			&=
			\bigr\langle 
			\cw(y),
			\omega\wedge_\star\cw(c)
			\bigr\rangle 
			-
			\bigr\langle 
			\cw(y),
			\omega\wedge_\star\delta_{H_2}\alpha
			\bigr\rangle
			\quad \mod \mathbb Z ,
			\\
			&=
			\bigl\langle\cw(y),
			\omega\wedge_\star R_{\Spinc}(c,\alpha)\bigr\rangle
			\quad \mod \mathbb Z .
		\end{aligned}
		\]
		which coincides with the right hand side. 
		Thus the bordism relation is respected. 
		
		For the compatibility
		condition, let
		$\varphi\in\Omega_{n-r}(X; V^{\Spinc}_\bullet)/\im\partial_{H_1}$, we have
		\[
		\begin{aligned}
			h_{(c,\alpha)}\bigl(a_{\Spinc}(\varphi)\bigr)
			=
			h_{(c,\alpha)}\bigl(\emptyset,-\varphi\bigr)
			&=
			-\bigl\langle -\varphi,
			\omega\wedge_\star R_{\Spinc}(c,\alpha)\bigr\rangle \quad \mod \mathbb Z \\
			&=
			\bigl\langle \varphi,
			\omega\wedge_\star R_{\Spinc}(c,\alpha)\bigr\rangle
			\quad \mod \mathbb Z \\
			&= \bigl\langle \varphi,
			\omega_{(c,\alpha)}\bigr\rangle
			\quad \mod \mathbb Z. 
		\end{aligned}
		\]
		The independence of the representative $(c,\alpha)$ is proved by the same argument as in~\cite[Lem. 5.48]{Yam1}, 
		and the
		proposition follows.
	\end{proof}

	\medskip

	Now we describe the differential pushforward. 
	Let
	\[
	c=(N,p,f^\nabla_{T(\phi,p)},\widehat\eta)
	\]
	be a geometric $\widehat\tau_2$-twisted stable relative tangential
	$\Spinc$-cochain of relative dimension $r$ over $X$, where
	$p:N\to X$ is a proper submersion.	
	Using the fiber product map~\eqref{fiber}, we now construct the twisted
	differential pushforward map~\eqref{diffpush}
	\begin{align}
		{c}_*: 
		( \widehat{I\Omega^{\mathrm{Spin}^c}_{\mathrm{dR}}} )
		^{n}
		\bigl(N, p^*\widehat{\tau}_3\bigr)
		\longrightarrow
		( \widehat{I\Omega^{\mathrm{Spin}^c}_{\mathrm{dR}}} )^{n-r}
		(X,\widehat{\tau}_1).
	\end{align}
	For a representative $(\omega,h)$ with
	\[
	\omega\in
	\Omega^n_{D_{p^*H_3}\text{-}\clo}
	\bigl(N;N^\bullet_{\Spinc}\bigr)
	\quad \text{and} \quad
	h:
	\widehat{\Omega^{\Spinc}_{n-1}}
	\bigl(N,p^*\widehat\tau_3\bigr)
	\to\mathbb R/\mathbb Z
	\]
	satisfying the compatibility condition, set
	\[
	{c}_*(\omega,h)
	:=
	\left(
	p_!\bigl(\omega\wedge_\star\cw_{\mathrm{pre}}(c)\bigr),
	\,
	h\circ(\times_X c)
	\right).
	\]
	Here $p_!$ is ordinary fiber integration along the proper
	submersion $p$, and $\cw_{\mathrm{pre}}(c)$ is defined as in~\eqref{cwpre}. 
	The compatibility of the pair may be verified directly as in the untwisted case in \cite[Sec.~5]{Yam1}. 
	In particular, consider the trivial fibration $X\times S^1\to X$. The trivial $\Spinc$-structure on $S^1$ induces a differential stable relative tangential $\Spinc$-structure on $X\times S^1$. 
	Then we have the $S^1$-integration map:
	\begin{align}
		\int: 
		( \widehat{I\Omega^{\mathrm{Spin}^c}_{\mathrm{dR}}} )
		^{n+1}
		\big(
		X\times S^1, \widehat{\tau}_1\big) \longrightarrow
		( \widehat{I\Omega^{\mathrm{Spin}^c}_{\mathrm{dR}}} )^{n}(X,\widehat{\tau}_1),
	\end{align}
	which is the analogue of the $S^1$-integration map in the presence of a background twist.

	\section{Gerbe-theoretic models and twisted anomaly map}

	The advantage of gerbe-theoretic models is due to their closer relations to the index-theoretic objects such as spinor bundles and Dirac operators. 
	For a differential-geometric construction of the anomaly map, we first review bundle gerbes and gerbe modules developed in \cite{hitchin1999lectures, Chatterjee1998, Mur96, BCMMS}, among others, and 
	then give gerbe-theoretic models for differential twisted $\Spinc$-bordism and Anderson dual.

	\subsection{Review of bundle gerbes and gerbe modules}

	We begin with a brief account of bundle gerbes following \cite{Mur96}, then we recall gerbe modules and twisted $K$-theory in the sense of \cite{BCMMS}. For a categorical formulation, see \cite{NikWal}. After that, we record the $\Spinc$-gerbe and its canonical module, which play a crucial role in our models for differential twisted $\Spinc$-bordism and Anderson dual.

	\subsubsection{Bundle gerbes with connection and curving}
	
	Let $\pi:Y\to X$ be a surjective submersion. For $p\ge1$ write
	\[
	Y^{[p]}:=\underbrace{Y\times_X\cdots\times_X Y}_{p\ \text{times}}.
	\]
	Let $\pi_i:Y^{[p+1]}\to Y^{[p]}$ be the projection omitting the $i$th factor, and define
	\[
	\delta:\Omega^k(Y^{[p]})\longrightarrow \Omega^k(Y^{[p+1]}),\qquad
	\delta=\sum_{i=1}^{p+1}(-1)^{i-1}\pi_i^*.
	\]
	Then $\delta^2=0$ and there is an exact sequence
	\begin{align}\label{exact_BG}
		0\longrightarrow \Omega^k(X)\xrightarrow{\ \pi^*\ }\Omega^k(Y)\xrightarrow{\ \delta\ }\Omega^k(Y^{[2]})\xrightarrow{\ \delta\ }\Omega^k(Y^{[3]})\longrightarrow\cdots.
	\end{align}
	A \emph{bundle gerbe} on $X$ is a pair $(L,Y)$ consisting of a complex line bundle $L\to Y^{[2]}$ and an isomorphism over $Y^{[3]}$
	\[
	\mu:\ \pi_3^*L\otimes \pi_1^*L \xrightarrow{\ \simeq\ }\pi_2^*L
	\]
	satisfying the usual coherence over $Y^{[4]}$. Fiberwise, we may write the isomorphism at a point $(y_1,y_2,y_3)\in Y^{[3]}$ as
	\[
	\mu_{(y_1,y_2,y_3)}:L_{(y_1,y_2)}\otimes L_{(y_2,y_3)}\xrightarrow{\ \simeq\ }L_{(y_1,y_3)}.
	\]
	Two bundle gerbes $(L,Y)$ and $(L',Y')$ over $X$ are \emph{stably isomorphic} if there is a line bundle $J$ on $Y\times_X Y'$ with
	\[
	L\otimes \delta(J)\cong L'
	\]
	where $\delta(J):=\pi_1^*J\otimes(\pi_2^*J)^{-1}$ is the trivial gerbe on $Y^{[2]}\times_X Y'^{[2]}$. Stable isomorphism classes of bundle gerbes on $X$ are classified by $H^3(X;\mathbb{Z})$ via the Dixmier--Douady class.
	
	\medskip
	
	Lifting bundle gerbes arise naturally from lifting obstructions.
	Let $Y\to X$ be a principal $G$-bundle. Set $\gamma:Y^{[2]}\to G$ by $y_2=y_1\cdot\gamma(y_1,y_2)$. For a central extension
	\[
	1\longrightarrow \uU(1)\longrightarrow \widehat{G}\longrightarrow G\longrightarrow 1,
	\]
	we may pull back $\widehat{G}\to G$ along $\gamma$ to get a $\uU(1)$-bundle $L\to Y^{[2]}$.
	The bundle gerbe structure for $L$ is given by the multiplication of $\widehat{G}$.
	
	\medskip
	
	A \emph{bundle gerbe connection} on $(L,Y)$ is a connection $\nabla^L$ on $L$ compatible with $\mu$. Its curvature $F^L\in\Omega^2(Y^{[2]})$ satisfies $\delta(F^L)=0$, hence by \eqref{exact_BG} there exists a \emph{curving} $\omega\in\Omega^2(Y)$ with $\delta\omega=F^L$. Given a choice of $\omega$, one checks
	\[
	\delta(d\omega)=d(\delta\omega)=dF^L=0,
	\]
	so $d\omega$ descends to a closed $3$-form $H\in\Omega^3(X)$, whose normalized de Rham class corresponds to the Dixmier--Douady class of the bundle gerbe. We write
	\[
	\widehat{\mathcal{G}}=(L,Y,\nabla^L,\omega),
	\]
	as a bundle gerbe with connection and curving. 
	Given a line bundle $P\to Y$ with connection $\nabla^P$, the trivial gerbe
	\[
	\delta P=\pi_1^*P\otimes (\pi_2^*P)^{-1}
	\]
	carries the induced connection $\delta\nabla^P=\pi_1^*\nabla^P-\pi_2^*\nabla^P$ and curving $F^{ P}$. We call $(\delta P,Y,\delta\nabla^P,F^{P})$ the trivial bundle gerbe with induced connection and curving \cite{MS00}.
	For bundle gerbes with connection and curving $(L,Y,\nabla^L,\omega)$ and $(L',Y',\nabla^{L'},\omega')$, a \emph{connection-preserving stable isomorphism}
	$
	(J,\nabla^J)$
	consists of a line bundle $J\to Y\times_X Y'$ with connection $\nabla^J$ such that:
	\begin{itemize}
		\item $L\otimes \delta J \cong L'$ as bundle gerbes;
		\item the induced connection $\nabla^{\delta J}$ is preserved by the isomorphism.
	\end{itemize}
	Additionally, a {connection-preserving stable isomorphism}
	$
	(J,\nabla^J)$ is called a \emph{differential stable isomorphism} if it satisfies the following
	\begin{itemize}
		\item the curvings are related by
		\begin{equation}\label{curving_preserving}
			\omega+F^{J}=\omega' \quad\text{on }Y\times_X Y'.
		\end{equation}
	\end{itemize}
	In particular, \eqref{curving_preserving} implies $H=H'$ on $X$.
	
	\medskip
	
	As in \cite{MS00}, differential stable isomorphism classes of bundle gerbes with connection and curving are classified by Deligne cohomology
	\[
	\widehat{H}^3(X;\mathbb{Z})\cong {H}^2\bigl(X;\ \underline{\uU(1)}\xrightarrow{\tilde d}\Omega^1\xrightarrow{\,d\,}\Omega^2\bigr).
	\]
	Let $\mathsf{Grb}_{\mathrm{conn}}(X)$ be the $2$-groupoid whose objects are bundle gerbes with connection and curving on $X$, $1$-morphisms are
	differential stable isomorphisms, and $2$-morphisms are isomorphisms between $1$-morphisms preserving the induced connections. There is a canonical equivalence
	\[
	\mathsf{Grb}_{\mathrm{conn}}(X) \simeq B^2_{\mathrm{conn}}\uU(1)(X).
	\]
	Thus our notion of degree-3 differential twist can also be formulated using bundle gerbes with connection and curving.
	
	\subsubsection{Gerbe modules with module connection}
	
	Fix a bundle gerbe with connection and curving
	$
	\widehat{\mathcal{G}}=(L,Y,\nabla^L,\omega)
	$
	over $X$. From now on, we temporarily assume its underlying topological bundle gerbe $(L,Y)$ is torsion, i.e., its \DD class is torsion in $H^3(X;\mathbb{Z})$. 
	A \emph{bundle gerbe module} for $\widehat{\mathcal{G}}$ is a complex vector bundle $E\to Y$ together with an isomorphism over $Y^{[2]}$
	\[
	\psi:\ L\otimes \pi_1^*E \xrightarrow{\simeq} \pi_2^*E
	\]
	compatible with the gerbe multiplication. A \emph{module connection} is a connection $\nabla^E$ on $E$ such that
	\[
	\psi\circ(\nabla^L\otimes \mathrm{id}+\pi_1^*\nabla^E)=\pi_2^*\nabla^E\circ\psi.
	\]
	Taking curvature forms and recalling $F^L=\delta\omega=\pi_1^*\omega-\pi_2^*\omega$,
	\[
	F^L\otimes \mathrm{id}+\pi_1^*F^E=\psi^{-1}\circ \pi_2^*F^E\circ\psi,
	\]
	equivalently,
	\[
	\pi_1^*(F^E+\omega\,\mathrm{id})=\psi^{-1}\circ \pi_2^*(F^E+\omega\,\mathrm{id})\circ\psi.
	\]
	Hence there exists a unique $2$-form
	\[
	\widetilde{F^E}\in \Omega^2(X;\End(E))
	\quad\text{with}\quad
	\pi^* \widetilde{F^E}=F^E+\omega\,\mathrm{id},
	\]
	which we call the \emph{descended curvature} of $E$. Here we note $\End(E)$ descends to an Azumaya bundle on $X$. 
	The \emph{twisted Chern character} of $(E,\nabla^E)$ is then defined by
	\[
	\ch_{\widehat{\mathcal{G}}}
	(\nabla^E):=\mathrm{tr}\exp\Bigl(\widetilde{F^E}\Bigr)\in \Omega^{\mathrm{even}}(X).
	\]
	One checks $(d-H)\ch_{\widehat{\mathcal{G}}}
	(\nabla^E)=0$, hence it defines a class in $H^{*}(X, H)$.
	
	In the language of gerbe modules, a \emph{differential trivialization} of $\widehat{\mathcal{G}}$ is a rank one $\widehat{\mathcal{G}}$-module $J$ with module connection $\nabla^J$, 
	such that the descended curvature $\widetilde{F^J}$ vanishes. Hence a differential
	stable isomorphism between $\widehat{\mathcal{G}}$ and $\widehat{\mathcal{G}}'$ is equivalent to a {differential trivialization} of the tensor bundle gerbe 
	$
	\widehat{\mathcal{G}}^{-1}\otimes\widehat{\mathcal{G}'}
	$. 
	Locally speaking, fix a trivialization $(h_{ij},\lambda_i)$ of the \v{C}ech--Deligne cocycle $(g_{ijk},A_{ij},B_i)$ arising from $\widehat{\mathcal{G}}$.
	The local function $h$ defines a rank one $\widehat{\mathcal{G}}$-module, and the local 1‐form $\lambda$ defines a module connection, such that the descended curvature vanishes. Thus the notion of \emph{differential trivialization}  corresponds precisely to the  trivialization of cocycles in the \v{C}ech--Deligne picture.

	\medskip
	Now we turn to twisted $K$-theory. 
	The stable isomorphism classes of $\mathcal{G}$-modules \cite{BCMMS} give a geometric model for twisted $K$-theory 
	$K^0(X, \mathcal{G})$, whose differential extension is constructed in \cite{park}. 
	However, for non-torsion twists $\mathcal{G}$, there are no finite rank $\mathcal{G}$-modules. 
	Instead, we can consider infinite rank modules to model
	twisted $K$-theory $K^0(X, \mathcal{G})$ with non-torsion twist. 
	Let $U_{\mathrm{tr}}\subset \mathcal{U}(\mathcal{H})$ be the subgroup of unitaries that differ from the identity by a trace class operator.
	A \emph{(super) $U_{\mathrm{tr}}$-module} $\mathcal{E}$ consists of a pair of Hilbert bundles $(E^+, E^-)$ with structure groups reduced to $U_{\mathrm{tr}}$, and an isomorphism
	\[
	\psi:\ L\otimes \pi_1^*\mathcal E \xrightarrow{\simeq} \pi_2^*\mathcal E
	\]
	which is compatible with bundle gerbe multiplication. Two such modules are equivalent if they differ by pulling back a line bundle from $X$. 
	The twisted $K$-theory $K^0(X, \mathcal{G})$ is modelled by the stable isomorphism classes of $U_\tr$-modules.
	Since there is a $\PUH$-equivariant
	homotopy equivalence
	\[
	\Fred(\mathcal{H}) \simeq 
	B U_{\tr} \times \mathbb{Z},
	\]
	we may identify $K^0(X, \mathcal{G})$ with the twisted $K$-theory defined via parametrized spectra, see \cite{BCMMS} for a detailed exposition.

	\medskip
	
	For a $U_{\mathrm{tr}}$-module $\mathcal{E}=(E^+,E^-)$, there exist \emph{(super) $U_\tr$-module connections} $\nabla^\mathcal{E}:=(\nabla^{E^+},\nabla^{E^-})$ such that  $\nabla^{E^+}-\nabla^{E^-}$ is trace class (cf. \cite{MS03}). 
	Then for each $p\ge 1$, 
	\[
	\bigl(F^{E^+}+\omega I\bigr)^p-\bigl(F^{E^-}+\omega I\bigr)^p
	\]
	is trace class, and the even form
	\begin{align}\label{ch_Z2}
		\mathrm{tr}\left(
		\exp\bigl((F^{E^+}+\omega I)\bigr)
		-
		\exp\bigl((F^{E^-}+\omega I)\bigr)
		\right)=
		\exp(\omega)\,\mathrm{tr}\bigl(\exp(F^{E^+})-\exp(F^{E^-})\bigr)
	\end{align}
	is globally defined on $Y$ and descends to an even $(d-H)$-closed form on $X$, which we denote as the \emph{twisted Chern character form} $\ch_{\widehat{\mathcal{G}}}
	(\nabla^\mathcal{E})$.

	\subsubsection{The $\Spin^c$-gerbe and its canonical module}
	
	We now introduce $\Spin^c$-gerbe and the associated canonical module, which are closely related to twisted $\Spin^c$-structures. 
	Let $E\to X$ be an oriented rank $n$ vector bundle with connection $\nabla^E$. Its frame bundle
	$P_{\SO}(E)\to X$ is a principal $\SO(n)$-bundle equipped with the induced principal connection.
	Consider the lifting bundle gerbe construction
	\[
	W \rightarrow P_{\SO}(E)^{[2]},
	\] 
	for the central extension $\uU(1)\to \Spin^c(n)\to \SO(n)$, and denote
	\[
	\mathcal{G}^{\Spin^c}_E=(W,P_{\SO}(E)),
	\]
	as the \emph{$\Spinc$-gerbe} associated to $E$. 
	There is a canonical bundle gerbe connection $\nabla^W$ induced from
	the local $1$-forms $A_{ij}$ in \eqref{connection}.
	Upon the choice of zero curving, we obtain a bundle gerbe with connection and curving
	\[
	\widehat{\mathcal{G}^{\Spin^c}_E}=(W,\;P_{\SO}(E),\;\nabla^W,\;0),
	\]
	corresponding to the chosen lift~\eqref{connW3}
	\[
	W_3^{\mathrm{conn}}\colon B_\nabla\SO \longrightarrow B_{\mathrm{conn}}^2\uU(1).
	\]
	There is a canonical gerbe module $\mathcal{S}$ over $\widehat{\mathcal{G}^{\Spin^c}_E}$ which plays the role of spinor bundle for non-$\Spin^c$ vector bundles.
	Let $\rho^c_n\colon \Spinc(n)\to \GL(\Delta_n)$ be the complex spin representation.
	Define the trivial vector bundle
	\[
	\mathcal{S}:=\Delta_n\times P_{\SO}(E)\ \longrightarrow\ P_{\SO}(E).
	\]
	The module structure
	\[
	\phi_{(p_1,p_2)}\colon W_{(p_1,p_2)}\otimes \mathcal{S}_{p_2}\xrightarrow{\;\simeq\;}\mathcal{S}_{p_1},\qquad
	(p_1,p_2)\in P_{\SO}(E)^{[2]},
	\]
	can be described as follows. For a point $(p_1,p_2,\widetilde g)$ of the fiber $W_{(p_1,p_2)}$ with $p_1=p_2\,g$ for $g\in \SO(n)$ and $\widetilde g\in \Spinc(n)$ a lift of $g$, set
	\[
	\phi_{(p_1,p_2)}\bigl((p_1,p_2,\widetilde g)\otimes(p_2,v)\bigr)
	=(p_1,\rho^c_n(\widetilde g)v).
	\]
	One checks this is indeed a module structure over the $\Spinc$ gerbe. When $n=2k$, $\Delta_{2k}=
	\Delta_{2k}^+\oplus \Delta_{2k}^-$.
	In this case, the module $\mathcal{S}$ is of rank $2^k$, and splits as the direct sum $\mathcal{S}=\mathcal{S}^+\oplus \mathcal{S}^-$, where 
	$
	\mathcal{S}^{\pm}:= \Delta_{2k}^{\pm} \times P_{\SO}(E)
	$
	are both gerbe modules of rank $2^{k-1}$ with module connections over the $\Spinc$ gerbe. 
	When $n=2k+1$, $\Delta_{2k+1}$ is irreducible, thus $\mathcal{S}$ is an irreducible module of rank $2^{k}$.
	
	The module $\mathcal{S}$ carries a canonical module connection $\nabla^\mathcal{S}$ induced from $\nabla^E$. Indeed, for $\nabla^E$ there is a connection 1-form $\theta_E \in \Omega^1(P_{\SO}(E), \so_{n})$ on $E$.
	Under the canonical splitting $\so_n\hookrightarrow \spinc_n$ and the pushforward 
	$
	\rho^c_*: \spinc_{n} \to \End(\Delta_{n}),  
	$
	one gets a connection 1-form 
	$
	\rho^c_*({\theta_E})
	\in
	\Omega^1(P_{\SO}(E), \End(\Delta_{n})  )
	$, and the corresponding connection $\nabla^{\mathcal{S}}$ on $\mathcal{S}$ defines a module connection.

	There are two Azumaya bundles over $X$ which arise naturally from $E$. The first one is given by the endomorphism bundle  $\End(\mathcal{S})$ descends to an Azumaya bundle over $X$, which we still denote by $\End(\mathcal{S})$, with its natural induced module connection $\nabla^{\End(\mathcal{S})}$. 
	The second Azumaya bundle $\Cl^+(E)$ is defined as follows
	\[
	\Cl^+(E):=
	\begin{cases}
		\Cl(E),
		&  \mathrm{rank}(E)=2k  \\
		\Cl^0(E), 
		&   \mathrm{rank}(E)=2k+1
	\end{cases}
	\]	
	which carries a connection induced by $\nabla^{E}$. 
	In fact, these two Azumaya bundles are related by a natural connection-preserving isomorphism
	\begin{align}
		\Psi_0: 
		\Cl^+(E) 
		\cong 
		\End(\mathcal{S}).
	\end{align}
	We will use an analogue of this isomorphism in 
	our gerbe-theoretic definition for differential twisted $\Spinc$-structures.

	\subsection{Differential models via gerbe modules}

	In this section we give gerbe-theoretic models for differential twisted $\Spin^c$-bordism and for the corresponding Anderson dual. Namely, our differential twists are given by a bundle gerbe (not necessarily torsion) over $X$ with connection and curving
	\[
	\widehat{\mathcal{G}}=
	(L,Y,\nabla^L,\omega),
	\]
	and differential twisted $\Spinc$-structures are described
	using gerbe modules over the pullback of differential twist. 
	Given a map $f\colon M\to X$ and an oriented rank $n$ vector bundle $E\to M$ with connection $\nabla^E$, we first introduce the topological version of twisted $\Spinc$-structures with the underlying bundle gerbe  ${\mathcal{G}}$. 
	
	\begin{defi}
		A ${\mathcal{G}}$-twisted $\Spin^c$-structure on $E$ consists of
		\[
		(S^c,\;\Psi)
		\]
		where $S^c$ is a $f^*{\mathcal{G}}$-module over $M$, and
		\[
		\Psi\colon \Cl^+(E)\xrightarrow{\;\cong\;} \End(S^c)
		\]
		is an isomorphism of Azumaya bundles over $M$. If $n=2k$, $S^c\cong S^{c+}\oplus S^{c-}$ with each summand of rank $2^{k-1}$.  If $n=2k+1$, then $S^c$ is irreducible of rank $2^k$.
	\end{defi}
	
	For the differential version, we have
	
	\begin{defi}\label{def: diffSpincBG}
		A differential $\widehat{\mathcal{G}}$-twisted $\Spin^c$-structure on $E$ consists of
		\[
		(\nabla^{S^c},\;\Psi)
		\]
		where $\nabla^{S^c}$ is a module connection with underlying $f^*\widehat{\mathcal{G}}$-module ${S^c}$, such that 
		$
		(S^c,\;\Psi)
		$ is a
		${\mathcal{G}}$-twisted $\Spin^c$-structure, with the additional requirement that $\Psi$
		is connection-preserving. 
	\end{defi}
	\noindent
	For each $(\nabla^{S^c}, \Psi)$, there is an associated canonical 2-form $\kappa'(\nabla^{S^c}, \Psi)$, defined by
	\begin{align}\label{kappa_geo}
		\kappa'(\nabla^{S^c}, \Psi):=	\tr_0(\widetilde{F^{S^c}}),
	\end{align}
	where $\tr_0={\tr}/{\mathrm{rank}}$ is the normalized trace. We will show this assignment
	is compatible with (\ref{kappa}) in Theorem~{\ref{thm:equiv-mod}}.
	
	\begin{defi}
		An $n$-dimensional geometric $\widehat{\mathcal{G}}$-twisted $\Spinc$-chain over $X$ is a quadruple 
		\[
		(M, f,  \nabla^{S^c}, \Psi)
		\]
		where 
		\begin{itemize}
			\item $M$  is a compact oriented $n$-dimensional Riemannian manifold with collar boundary, along which all data are assumed to be constant;
			\item $f: M\to X$ is a smooth map;
			\item $( \nabla^{S^c}, \Psi)$ is a differential $\widehat{\mathcal{G}}$-twisted $\Spinc$-structure on the tangent bundle $TM$.
		\end{itemize} 		
	\end{defi}
	\noindent
	Two chains $(M,f,\nabla^{S^c},\Psi)$ and $(M',f',\nabla^{S'^c},\Psi')$ are said to be isomorphic if there exists an orientation-preserving diffeomorphism $h:M\to M'$ preserving collars such that $f = f'\circ h$, and a connection-preserving isomorphism of $f^*\widehat{\mathcal{G}}$-modules $S^c \xrightarrow{\cong} h^{*}S'^c$ over $M$, intertwining the gerbe action and Clifford action.
	The collection of differential $\widehat{\mathcal{G}}$-twisted $\Spinc$-chains forms an abelian group
	$\widetilde{C}^{\Spinc}_n
	(X, \widehat{\mathcal{G}})$, and
	there is a natural boundary map
	\[
	\partial: \widetilde{C}^{\Spinc}_n
	(X, \widehat{\mathcal{G}})
	\to
	\widetilde{C}^{\Spinc}_{n-1}
	(X, \widehat{\mathcal{G}})
	\]
	which we describe as follows. 
	
	For $n=2k$, $S^c_M=S_M^{c+} \oplus 
	S_M^{c-}$ is of rank $2^k$. 
	On the collar neighborhood $\partial M$,
	we have the decomposition
	$TM|_{\partial M}\cong T\partial M\oplus \underline{\mathbb{R}}$, where the trivial line bundle $\underline{\mathbb{R}}$ is spanned by the outward unit normal vector $\mathbf{n}$.  
	With $\mathbf{n}$, we pick out a canonical section of the associated sphere bundle, 
	which in turn reduces the principal $\SO(2k)$-bundle 
	$P_{\SO}(TM)|_{\partial M}$
	to the $\SO(2k-1)$-bundle
	$P_\SO(T\partial M)$. 
	Then by considering the global Clifford action of $\mathbf{n}$ on $\partial M$, we have the following identification $ 
	S_M^{c+}|_{\partial M} \cong 
	S_M^{c-}|_{\partial M}
	$
	of vector bundles over $P_\SO(T\partial M)$,
	together with the isomorphism:
	\[
	\Cl^+(T\partial M)\cong 
	\End(S_M^{c+}|_{\partial M})\cong
	\End(S_M^{c-}|_{\partial M}). 
	\]
	So we may set
	$
	\partial S^c_M:=
	S_M^{c+}|_{\partial M}
	$
	as the boundary module. 
	
	For $n=2k+1$, 
	the module $S_M^c$ on $M$ is of rank $2^k$ and irreducible. 
	We set the induced module on $\partial M$ by
	$
	\partial S^c_M:=S_M^{c}|_{\partial M}
	$. The Clifford action
	$
	\Cl^+(TM) \cong \End (S^c_M)
	$
	naturally restricts to the boundary,
	\[
	\Cl^+(T\partial M)\cong 
	\End(S_M^{c}|_{\partial M}).
	\]

	There is also a gerbe-theoretic version of the twisted Chern--Weil map~\eqref{cw}
	\begin{align}\label{cwgerbe}
		\cw:
		\widetilde{C}^{\Spinc}_n
		(X, \widehat{\mathcal{G}})
		\rightarrow
		\Omega _{n}(X;V^{\mathrm{Spin}^c}_{\bullet}). 
	\end{align}
	For each geometric $\widehat{\mathcal{G}}$-twisted $\Spinc$-chain $(M, f, \nabla^{S^c}, \Psi)$, the associated current sends a monomial generator $
	\omega \otimes p_I  \zeta^k
	\in \Omega^*(X; N_{\Spinc}^{\bullet})$ to the integral
	\[
	\int_M f^*\omega \wedge 
	p_I(M) \wedge
	\kappa'(\nabla^{S^c}, \Psi)^k,
	\]
	where $p_I (M)$ is the closed Pontryagin form on $M$ arising from the Pontryagin polynomial $p_I$ and the Riemannian connection of $M$. 
	With the map~\eqref{cwgerbe} in hand, 
	now we are ready to give the gerbe-theoretic definition for differential twisted $\Spinc$-bordism. 
	\begin{defi}\label{defi:gerbe-spinc-bord}
		For each integer $n$, set  
		\[
		{\widehat{\Omega^{\mathrm{Spin}^c}_{n-1}}(X, \widehat{\mathcal{G}})}
		:=\{
		(M, f,  \nabla^{S^c}, \Psi,  \phi)
		\}/\sim, 
		\]
		where
		$
		(M, f, \nabla^{S^c}, \Psi)
		$ is a closed $(n-1)$-dimensional differential twisted $\Spinc$-chain over $X$,  and
		\[
		\phi \in
		{\Omega _n(X;V^{\mathrm{Spin}^c}_{\bullet})/\mathrm{im}\partial_H}. 
		\]
		The equivalence relation $\sim$ is generated by isomorphism, disjoint union, together with the bordism relation
		\[
		(\partial W,\partial F,\nabla^{\partial S^{c}_{W}},\partial\Psi_{W},0)
		\sim
		(\emptyset^{},\emptyset,\emptyset,\emptyset,
		-\cw(W,F,\nabla^{S^{c}_{W}},\Psi_{W})),
		\]
		where $(W,F,\nabla^{S^{c}_{W}},\Psi_{W})
		\in \widetilde{C}^{\Spinc}_{n}(X,\widehat{\mathcal{G}})$.
	\end{defi}
	
	Similarly, there is a gerbe-theoretic model for the differential Anderson dual.
	
	\begin{defi} 
		For each integer $n$, set 
		\[
		\bigl(
		\widehat{
			I\Omega^{\Spinc}_{\mathrm{dR}}
		}
		\bigr)^{n}(X, \widehat{\mathcal{G}})
		:=\{(\omega, h)\},
		\]
		where
		\begin{itemize}
			\item 
			$\omega \in
			\Omega^n_{D_H\mathrm{-clo}}
			(X; N_{\Spin^c}^{\bullet})  $ 
			is a $D_H$-closed 
			$N_{\Spin^c}^{\bullet}$-valued form,
			\item $h:
			\widehat{
				\Omega^{\Spinc}_{n-1}
			}
			(X, \widehat{\mathcal{G}})
			\to\mathbb{R}/\mathbb{Z}$ 
			is a group homomorphism,
			\item they satisfy the compatibility condition 
			\[
			h\circ a^{\Spinc}=
			\mathrm{mod}\mathbb{Z} \circ
			\langle- ,\omega\rangle.
			\]
		\end{itemize}
	\end{defi}
	\noindent
	The structure maps for both gerbe-theoretic models can be defined in a completely analogous way as in Section 3. 
	Under the equivalence $\mathsf{Grb}_{\mathrm{conn}}(X) \simeq B^2_{\mathrm{conn}}\uU(1)(X)$, one may similarly formulate the axioms for differential extension in the context of gerbe-theoretic models, as in Definition~\ref{defi:diffextco} and Definition~\ref{defi:diffextho}.

	\medskip
	
	Let $\widehat{\mathcal G}$ be a bundle gerbe with connection and curving representing the same differential class as $\widehat\tau$ in $\widehat H^3(X;\mathbb Z)$.
	By functoriality, a morphism of differential twists $\widehat{\tau}\to\widehat{\tau}'$ induces a canonical isomorphism
	\begin{equation}\label{stable_tau}
		\widehat{\Omega^{\Spinc}_*}(X,\widehat{\tau})\xrightarrow{\ \cong\ }\widehat{\Omega^{\Spinc}_*}(X,\widehat{\tau}').
	\end{equation}
	Thus we may write $\widehat{\Omega^{\Spinc}_*}(X,[\widehat{\tau}])$ since it depends only on the isomorphism class of $\widehat\tau$. Similarly, we have 
	
	\begin{prop}\label{prop:stable-inv-gerbe}
		If $(K,\nabla^K)$ is a 
		differential stable isomorphism of bundle gerbes $\widehat{\mathcal G}\to\widehat{\mathcal G}'$ over $X$, then there is a canonical isomorphism
		\[
		\widehat{\Omega^{\Spinc}_*}(X,\widehat{\mathcal G})\xrightarrow{\ \cong\ }\widehat{\Omega^{\Spinc}_*}(X,\widehat{\mathcal G}'),
		\]
		which is compatible with the structure maps of the differential extension.
	\end{prop}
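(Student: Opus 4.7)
The plan is to construct the isomorphism by transporting gerbe modules along $(K,\nabla^K)$ and to verify that all geometric data appearing in a differential cycle are preserved up to canonical identification. Concretely, the differential stable isomorphism $(K,\nabla^K)$ realizes a connection-preserving equivalence of module categories: given a $f^{*}\widehat{\mathcal{G}}$-module $(S^c,\nabla^{S^c})$ on $M$ (where $f\colon M\to X$), one forms the tensor product $\pi_1^{*}S^c\otimes f^{*}K$ on $f^{*}(Y\times_X Y')$ and verifies that it descends along the second projection to a $f^{*}\widehat{\mathcal{G}}'$-module $(S'^c,\nabla^{S'^c})$. Because $K$ has rank one, the canonical map $\End(S^c)\to\End(S'^c)$ is a connection-preserving isomorphism of Azumaya bundles, and pre/post-composing with it converts $\Psi\colon\Cl^{+}(TM)\xrightarrow{\cong}\End(S^c)$ into $\Psi'\colon\Cl^{+}(TM)\xrightarrow{\cong}\End(S'^c)$. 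This defines a natural transport
\[
T_K\colon\widetilde{C^{\Spinc}_n}(X,\widehat{\mathcal{G}})\longrightarrow\widetilde{C^{\Spinc}_n}(X,\widehat{\mathcal{G}}'),\qquad
(M,f,\nabla^{S^c},\Psi)\longmapsto(M,f,\nabla^{S'^c},\Psi').
\]

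The next step is to show that the associated 2-form $\kappa'$ from \eqref{kappa_geo} is preserved, from which preservation of the twisted Chern-Weil current follows immediately. Using the curving-preservation condition \eqref{curving_preserving}, $\omega+F^{K}=\omega'$, together with the definition of the module connection on the tensor product, one checks that the descended curvatures agree as $\End$-valued $2$-forms on $M$, i.e. $\widetilde{F^{S'^c}}=\widetilde{F^{S^c}}$. Consequently $\kappa'(\nabla^{S'^c},\Psi')=\kappa'(\nabla^{S^c},\Psi)$, and since all Pontryagin forms of $TM$ are intrinsic to the Riemannian connection, one obtains
\[
\cw(M,f,\nabla^{S'^c},\Psi')=\cw(M,f,\nabla^{S^c},\Psi).
\]
Thus $T_K$ extends to differential cycles by the rule $(M,f,\nabla^{S^c},\Psi,\phi)\mapsto(M,f,\nabla^{S'^c},\Psi',\phi)$, retaining the form part $\phi\in\Omega_n(X;V^{\Spinc}_\bullet)/\im\partial_H$ unchanged.

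To descend to the bordism group, one verifies that $T_K$ commutes with disjoint union, with the boundary operation (the construction of $\partial S^c$ via Clifford action of the outward normal commutes with tensoring by $f^{*}K$ along the collar), and with isomorphisms of chains (induced by tensoring module isomorphisms with $f^{*}K$). Bordism relations therefore map to bordism relations, and $T_K$ induces a homomorphism $\widehat{\Omega^{\Spinc}_{*}}(X,\widehat{\mathcal{G}})\to\widehat{\Omega^{\Spinc}_{*}}(X,\widehat{\mathcal{G}}')$. The inverse is obtained by transport along $(K^{-1},\nabla^{K^{-1}})$: the composition corresponds to tensoring with the canonical trivial differential stable isomorphism $K\otimes K^{-1}\cong\underline{\mathbb{C}}$ with trivial connection, which induces the identity on module data up to canonical isomorphism, hence the identity on bordism classes. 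Compatibility with the structure maps $R^{\Spinc}$, $I^{\Spinc}$, $a^{\Spinc}$, and $\ch'^{\Spinc}$ is then automatic: $R^{\Spinc}$ and $\ch'^{\Spinc}$ depend only on the (unchanged) Chern-Weil current and $\phi$; $a^{\Spinc}$ involves only the form data; and $I^{\Spinc}$ factors through the underlying topological twisted $\Spinc$-structure, which transforms in the expected way under the underlying topological stable isomorphism of $\mathcal{G}$ and $\mathcal{G}'$.

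The main technical obstacle is to make the descent of $\pi_1^{*}S^c\otimes f^{*}K$ from $f^{*}(Y\times_X Y')$ to $f^{*}Y'$ rigorous as a $f^{*}\widehat{\mathcal{G}}'$-module with module connection, and to verify the equality $\widetilde{F^{S'^c}}=\widetilde{F^{S^c}}$ globally on $M$ from the local curving relation on $Y\times_X Y'$. This is precisely the point at which the \emph{differential} part of a differential stable isomorphism (as opposed to a merely topological one) is used, and it is also what makes the 2-form $\kappa'$ and the twisted Chern-Weil current strictly invariant rather than merely invariant up to exact error.
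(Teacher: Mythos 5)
Your proposal follows essentially the same route as the paper: transport gerbe modules by tensoring with the rank-one bundle $K$, use the curving-preservation identity \eqref{curving_preserving} to conclude that the descended curvature (hence the $2$-form $\kappa'$ and the twisted Chern-Weil current) is preserved, and obtain the inverse by tensoring with $K^{-1}$. The one bookkeeping issue to double-check is the sign: with the convention $L\otimes\delta J\cong L'$ and $\omega+F^{J}=\omega'$ used in this paper, the transport should be $S^{c\prime}=S^c\otimes K^{-1}$ (as in the paper's proof), since tensoring with $K$ would shift the descended curvature by $2F^K$ rather than cancel it; swapping $K$ and $K^{-1}$ throughout your argument fixes this without affecting the structure.
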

	
	\begin{proof}
		Let $(M, f,\nabla^{S^c},\Psi, \varphi)$ be a differential $\widehat{\mathcal G}$-twisted cycle where 
		$S^c$ is a module over $f^*\widehat{\mathcal G}$ and $\Psi:\Cl^+(TM)\xrightarrow{\cong}\End(S^c)$ a connection-preserving Clifford isomorphism. As in \cite{BCMMS}, set
		\[
		S^{c\,\prime}:=S^c\otimes K^{-1},
		\]
		over the correspondence space. 
		Then $S^{c\,\prime}$ descends to a $f^*\widehat{\mathcal G}'$-module with induced connection, and $\Psi$ passes to a connection-preserving isomorphism $\Cl^+(TM)\xrightarrow{\cong}\End(S^{c\,\prime})$. 
		This construction gives a well-defined isomorphism, whose inverse is constructed by tensoring $K$.		
		For compatibility with the structure maps, it suffices for us to check
		\[
		\tr_0(\widetilde{F^{S^c}})
		=
		\tr_0(\widetilde{F^{S^{c\prime}}}).
		\] 
		This follows from the fact that as a differential stable isomorphism,  $(K,\nabla^K)$ has zero descended curvature. 
	\end{proof}
	\noindent
	Thus we may write $\widehat{\Omega^{\Spinc}_*}(X,[\widehat{\mathcal{G}}])$ since it depends only on the differential stable isomorphism class of $\widehat{\mathcal{G}}$. 
	To show the equivalence of models, it is essential to prove the following

	\begin{thm}\label{thm:equiv-mod}
		The two bordism groups are isomorphic
		\[
		\widehat{\Omega^{\Spinc}_*}(X,[\widehat\tau])\;\cong\;\widehat{\Omega^{\Spinc}_*}(X,[\widehat{\mathcal G}]),
		\]
		and compatible with structure maps of differential extensions. 
		Consequently, 
		\[
		\bigl(
		\widehat{
			I\Omega^{\Spinc}_{\mathrm{dR}}
		}
		\bigr)^{n}(X, [\widehat{\tau}])
		\xrightarrow{\cong }
		\bigl(
		\widehat{
			I\Omega^{\Spinc}_{\dR}
		}
		\bigr)^{n}(X, [\widehat{\mathcal{G}}]),
		\]
		which are also compatible with structure maps.
	\end{thm}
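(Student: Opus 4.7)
The plan is to construct, over any $(X,\widehat\tau)$ with a chosen differential stable representative $\widehat{\mathcal G}$ of $[\widehat\tau]$, mutually inverse natural isomorphisms between the Čech-Deligne model and the gerbe-module model, and then show that all structure maps of the differential extension match. Throughout I will work cycle-by-cycle and check that isomorphisms of cycles, boundaries, and the additive structures are preserved; the relations defining $\widehat{\Omega^{\Spinc}_{*}}$ then transport automatically. Once the isomorphism on $\widehat{\Omega^{\Spinc}_{*}}$ is established, the Anderson-dual statement is forced, since $(\widehat{I\Omega^{\Spinc}_{\dR}})^{n}(X,-)$ is built functorially from pairs $(\omega,h)$ where $h$ is a homomorphism out of $\widehat{\Omega^{\Spinc}_{n-1}}$ satisfying a compatibility phrased entirely in terms of $a^{\Spinc}$.

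For the forward direction, given a cycle $(M,f,f^{\nabla}_{TM},\widehat\eta,\phi)$, I first recall that $W_3^{\nabla}\circ f^{\nabla}_{TM}$ admits a canonical lift to $B^{2}_{\mathrm{conn}}\uU(1)$ via $W_{3}^{\mathrm{conn}}$ of \eqref{connW3} realized geometrically by the $\Spinc$-lifting gerbe $\widehat{\mathcal G^{\Spinc}_{TM}}$ with zero curving and its canonical spinor module $\mathcal S$ carrying $\nabla^{\mathcal S}$ and $\Psi_{0}\colon\Cl^{+}(TM)\xrightarrow{\cong}\End(\mathcal S)$. The 1-simplex $\widehat\eta$ in $B^{2}_{\nabla}\uU(1)(M)$, together with a compatible lift, corresponds under the equivalence $\mathsf{Grb}_{\mathrm{conn}}\simeq B^{2}_{\mathrm{conn}}\uU(1)$ to a differential stable isomorphism $(K,\nabla^{K})$ between $f^{*}\widehat{\mathcal G}$ and $\widehat{\mathcal G^{\Spinc}_{TM}}$. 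Then $S^{c}:=\mathcal S\otimes K^{-1}$ is an $f^{*}\widehat{\mathcal G}$-module with the induced $\nabla^{S^c}$, and $\Psi_{0}$ descends to $\Psi\colon\Cl^{+}(TM)\xrightarrow{\cong}\End(S^{c})$ because tensoring by a line bundle does not change the endomorphism algebra. This yields the gerbe-theoretic cycle $(M,f,\nabla^{S^c},\Psi,\phi)$. For the backward direction, given $(\nabla^{S^c},\Psi)$, the composite $\End(\mathcal S)\xrightarrow{\Psi_{0}^{-1}}\Cl^{+}(TM)\xrightarrow{\Psi}\End(S^{c})$ is a Morita equivalence, hence realized by a line bundle $K$ with connection such that $S^{c}\cong\mathcal S\otimes K^{-1}$ as modules over the tensor gerbe; the compatibility of the module connections forces $(K,\nabla^{K})$ to be a differential stable isomorphism from $f^{*}\widehat{\mathcal G}$ to $\widehat{\mathcal G^{\Spinc}_{TM}}$, which under the Deligne-gerbe dictionary is exactly the required 1-simplex $\widehat\eta$. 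These two constructions are mutually inverse up to the equivalences used in Proposition~\ref{prop:stable-inv-gerbe} and \eqref{stable_tau}, and they are clearly additive under disjoint union and compatible with collar boundaries, so they intertwine $I^{\Spinc}$ and $a^{\Spinc}$.

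The core technical step is to match the two canonical 2-forms and hence the Chern-Weil maps. On the Čech-Deligne side, \eqref{kappa_local} gives $\pi^{*}\kappa(\widehat\eta)=d\lambda+B$, while on the gerbe side the descended curvature of $S^{c}=\mathcal S\otimes K^{-1}$ satisfies
\[
\pi^{*}\tr_{0}\bigl(\widetilde{F^{S^{c}}}\bigr)=\tr_{0}(F^{\mathcal S})-F^{K}+\omega_{f^{*}\widehat{\mathcal G}}.
\]
Since $\nabla^{\mathcal S}$ is induced from the Levi-Civita connection on $TM$, a direct local computation using the Maurer-Cartan description \eqref{connection} gives $\tr_{0}(F^{\mathcal S})=0$, and the local Deligne data $(h,\lambda)$ representing $\widehat\eta$ is precisely the local data of the differential stable isomorphism $(K,\nabla^{K})$, so $F^{K}$ pulls back to $-d\lambda$ and $\omega_{f^{*}\widehat{\mathcal G}}=B$. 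Hence $\kappa(\widehat\eta)=\kappa'(\nabla^{S^c},\Psi)$. Because the Pontryagin forms on $TM$ are intrinsic to the Riemannian connection and the Chern-Weil currents depend only on $\kappa^{k}$ and those forms, the currents $\cw$ defined in \eqref{cw3} and in the gerbe setting agree; consequently $R^{\Spinc}$ and $\ch'^{\Spinc}$ correspond, as do the bordism relations.

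The main obstacle I anticipate is bookkeeping in the parity-dependent behavior of $S^{c}$ and its boundary module, in particular ensuring that the Morita inversion in the backward direction produces a genuine \emph{differential} (curving-preserving) stable isomorphism rather than just a connection-preserving one. This requires checking that the trace-part of the module-connection compatibility forces the 2-form equation \eqref{curving_preserving}, which is the whole content of the matching of $\kappa$ and $\kappa'$ above. Once this is in hand, the induced map on $(\widehat{I\Omega^{\Spinc}_{\dR}})^{n}$ is the one sending $(\omega,h)$ to $(\omega,h\circ\Theta^{-1})$, where $\Theta$ is the established isomorphism of bordism groups; compatibility with $R_{I\Omega}$, $I_{I\Omega}$, $a_{I\Omega}$ is immediate from the definitions, completing the theorem.
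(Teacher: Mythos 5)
Your proposal follows essentially the same route as the paper: build the comparison by trading between the canonical $\Spinc$-gerbe module $\mathcal S$ and the gerbe module $S^c$ through an auxiliary line bundle (your $K$, the paper's $J$), and then reduce compatibility of the structure maps to matching $\kappa(\widehat\eta)$ with $\kappa'(\nabla^{S^c},\Psi)=\tr_0\widetilde{F^{S^c}}$ via $\tr_0 F^{\mathcal S}=0$ and identifying the curvature of $K$ with the Deligne data $d\lambda$. The paper makes the line bundle explicit as $J=\Hom_{\Cl^{+}(TM)}(\pi_2^{*}\mathcal S,\pi_1^{*}S^c)$ and appeals to Schur's lemma; your Morita-equivalence formulation is equivalent. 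These differences are cosmetic.

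However, there is a genuine conceptual error running through your write-up: you repeatedly assert that $\widehat\eta$ gives, or that the Morita inversion must produce, a \emph{differential} stable isomorphism, i.e.\ one satisfying the curving condition \eqref{curving_preserving}. This is false. By Definition~\ref{def: difftwiSpincStru}, $\widehat\eta$ is a $1$-simplex in $B^2_{\nabla}\uU(1)(M)$, which carries no $2$-form data and hence corresponds only to a \emph{connection-preserving} stable isomorphism; the paper is careful to say exactly this. The failure of \eqref{curving_preserving} is precisely what $\kappa(\widehat\eta)$ measures locally via $\pi^{*}\kappa(\widehat\eta)=d\lambda+B$, and it is generically nonzero. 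In particular, your final paragraph's claim that ``the trace-part of the module-connection compatibility forces the 2-form equation \eqref{curving_preserving}'' is wrong: imposing \eqref{curving_preserving} here would force $\kappa$ to vanish identically, whereas the correct content of the matching of $\kappa$ and $\kappa'$ is that the two generically nonzero $2$-forms agree. Your actual computation of $\pi^{*}\tr_0\widetilde{F^{S^c}}=\tr_0(F^{\mathcal S})-F^{K}+\omega_{f^{*}\widehat{\mathcal G}}$ is fine and reaches the right conclusion, but the surrounding interpretation of what kind of stable isomorphism is being produced needs to be corrected, or a reader will be misled about where the differential form data lives.
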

	\begin{proof}
		Given a differential $\widehat{\mathcal G}$-twisted cycle $(M, f,\nabla^{S^c},\Psi, \varphi)$, recall that the canonical module $\mathcal{S}\to P_{\SO}(TM)$ is a gerbe module with module connection over the $\Spinc$-gerbe 
		\[
		\widehat{\mathcal G^{\Spinc}_{TM}}=
		(W, P_\SO(TM), \nabla^W, 0),
		\]
		together with a connection-preserving isomorphism
		\[
		\Psi_0: \Cl^+(TM) \cong \End(\mathcal{S}). 
		\] 
		Set the correspondence space $Z:=P_\SO(TM)\times_M f^*Y$ and denote $\pi_2, \pi_1$ by the projections to $P_{\SO}(TM)$ and $ f^*Y$ respectively. 
		With the isomorphisms $\Psi$ and $\Psi_0$, define a balanced tensor product
		\begin{align}\label{J}
			J:=\Hom_{\Cl^+(TM)}
			(\pi_2^*\mathcal{S},\,\pi_1^*S^c)
			\cong
			(\pi_2^*\mathcal S)^\vee
			\otimes_{\Cl^+(TM)}
			\pi_1^*S^c,
		\end{align}
		which is a complex line bundle over $Z$ by Schur’s lemma, with induced tensor connection denoted by $\nabla^J$. 
		Moreover, $J$ is equipped with a $(\widehat{\mathcal{G}^{\Spinc}_{TM}})^{-1} \otimes f^*\widehat{\mathcal{G}}$-module structure from the respective module structures of $\mathcal S$ and $S^c$, 
		with $\nabla^J$ a module connection. Summarizing, 
		$(J,\nabla^J)$ is a connection-preserving stable isomorphism between 
		$(\widehat{\mathcal{G}^{\Spinc}_{TM}})^{-1}$ and $f^*\widehat{\mathcal{G}}$, which is precisely a $1$-simplex between  $W_3^{\nabla}\circ f^\nabla_{TM}$ and $\iota_2\widehat\tau\circ f$ in $B^2_{\nabla}\uU(1)(M)$, i.e. a differential $\widehat\tau$-twisted $\Spinc$-structure on $TM$, by the local description of bundle gerbes with connections.

		\medspace
		Conversely, for a differential $\widehat{\tau}$-twisted $\Spinc$-cycle $(M,f, f^\nabla_{TM}, \widehat{\eta}, \varphi)$,  $\widehat{\eta}$ gives rise to
		a rank one module 
		$J$ over 
		$
		(\widehat{\mathcal{G}^{\Spinc}_{TM}})^{-1} \otimes f^*\widehat{\mathcal{G}}
		$
		with module connection $\nabla^J$. It suffices for us to construct a module $S^c$ over $f^*\mathcal{G}$ with module connection and Clifford action. 
		As in \cite{BCMMS}, 
		the product bundle
		$\pi_2^*\mathcal{S} \otimes J$ over $Z$ descends to a vector bundle ${S^c}$ over $f^{*}Y$
		which carries the induced $f^*\widehat{\mathcal{G}}$-module structure and the induced module connection $\nabla^{S^c}$. 
		Furthermore, the $\Cl^+(TM)$-action $\Psi_0$ of $\mathcal{S}$ carries over to $S^c$
		\[
		\Psi_{S^c}: 
		\Cl^+(TM) \to \End(S^c),
		\]
		which is connection-preserving because $\Psi_0$ is. 
		Hence $(M, f, \nabla^{S^c},\Psi_{S^c},\varphi)$ gives a differential $\widehat{\mathcal{G}}$-twisted $\Spinc$-cycle.

		To verify compatibility of the structure maps in the two models, it suffices to check for
		the curvature map, which boils down to the compatibility of \eqref{kappa} and \eqref{kappa_geo}.
		Let $(S^c,\nabla^{S^c},\Psi)$ be a differential $\widehat{\mathcal{G}}$-twisted $\Spinc$-structure on $TM$. The descended curvatures satisfy
		\begin{align}\label{eq: desc_cur}
			\widetilde{F^{ S^c}}
			=\widetilde{F^{\mathcal{S}}}+\widetilde{F^{ J}}\cdot \id.
		\end{align}
		Since the curving of $\widehat{\mathcal G^{\Spinc}_{TM}}$ is zero, the descended curvature
		of the canonical gerbe module $\mathcal S$ agrees with its ordinary curvature.
		Moreover, the connection on $\mathcal S$ is induced from the $\so_n$-connection
		on $P_{\mathrm{SO}}(TM)$, hence $\widetilde{F^{\mathcal S}}$ takes values in the image of
		$\so_n$ under the spin representation, and therefore
		\[
		\tr_0\widetilde{F^{\mathcal S}}=0.
		\]
		On the other hand, by the local expression of $\kappa$ in \eqref{kappa_local}, one has
		\[
		\kappa(\widehat\eta)=\widetilde{F^J}.
		\]
		Taking normalized trace in \eqref{eq: desc_cur}, we obtain
		\[
		\kappa'(\nabla^{S^c}, \Psi_{S^c})=
		\tr_0\widetilde{F^{S^c}}
		=
		\widetilde{F^J}
		=
		\kappa(\widehat\eta).
		\]
		The Anderson dual groups are defined functorially from the corresponding differential bordism theories; the induced isomorphism on Anderson duals therefore follows formally.
		This completes the proof.
	\end{proof}

	\subsection{The twisted anomaly map}\label{sec:twisted-anomaly-map}
	In this section we describe our differential-geometric construction of  the map
	\[
	\widehat{\Phi}_{\widehat{\mathcal G}}\colon
	\widehat K^{0}\bigl(X,\widehat{\mathcal G}^{-1}\bigr)
	\longrightarrow
	\bigl(\widehat{I\Omega^{\Spinc}_{\dR}}\bigr)^{2k}\bigl(X,\widehat{\mathcal G}\bigr).
	\]
	In the torsion case, Park \cite{park} gives a model for differential twisted $K$-theory using finite rank gerbe modules. For our purposes, we extend this model to the non-torsion setting by allowing super $U_{\tr}$-gerbe modules, thereby obtaining a geometric model for $\widehat K^{0}(X,\widehat{\mathcal G}^{-1})$ generated by representatives $x=(\mathcal E,\nabla^{\mathcal E},\rho)$.
	Then for each representative $x$, we define the curvature component $\omega_x$ using the twisted Chern character, and define the functional component $h_x$ using reduced eta-invariant.  We prove that the pair 
	$(\omega_x, h_x)$ defines an element in the twisted Anderson dual, and the assignment
	\[
	x\mapsto (\omega_x, h_x)
	\]
	gives rise to the desired twisted anomaly map.

	\subsubsection{Differential twisted $K$-theory}\label{sec: K}
	
	Fix a bundle gerbe with connection and curving $\widehat{\mathcal G}$ over $X$,
	with curving $\omega$ and global 3-curvature $H$.  We set
	\[
	x=(\mathcal E,\nabla^{\mathcal E},\rho),
	\]
	where $\mathcal E=(E^+,E^-)$ is a super $U_{\tr}$-module over
	$\widehat{\mathcal G}$, $\nabla^{\mathcal E}=(\nabla^{E^+},\nabla^{E^-})$ is a
	pair of compatible super $U_{\tr}$-module connections, and
	\[
	\rho\in \Omega^{\mathrm{odd}}(X)/\im(d-H).
	\]
	Recall in \cite{MS03}, the twisted Chern character form of $\nabla^{\mathcal{E}}$ is defined to be the global descended form
	\begin{equation}\label{eq:chern-form}
		\ch_{\widehat{\mathcal G}}(\nabla^{\mathcal E})
		:=\exp(\omega)\,\mathrm{tr}\bigl(\exp(F^{E^+})-\exp(F^{E^-})\bigr),
	\end{equation}
	which is $(d-H)$-closed. 
	The twisted Chern--Simons term for super  $U_{\mathrm{tr}}$-modules is described as follows.
	Choose smooth paths $t\mapsto\nabla_t^{E^{\pm}}=d+A_t^{E^\pm}$ with curvature $F_t^{E^\pm}$ and $\dot A_t^{E^\pm}=\tfrac d{dt}A_t^{E^\pm}$,
	such that the connection forms $A_t^{E^\pm}$, and hence
	$\dot A_t^{E^\pm}$, take values in $\mathrm{Lie}(U_{\tr})$.
	Define
	\begin{equation}\label{eq:twiCS}
		\CS_{\widehat{\mathcal G}}
		(\nabla_0^\mathcal E,\nabla_1^\mathcal E)
		:=
		\int_0^1\mathrm{tr}
		\Big(
		\dot A_t^{E^+}\exp(F_t^{E^+}+\omega I)-
		\dot A_t^{E^-}\exp(F_t^{E^-}+\omega I)
		\Big)dt,
	\end{equation}
	which descends to $X$ and satisfies
	\begin{equation}\label{eq:CS-transgression}
		(d-H)\,\CS_{\widehat{\mathcal G}}(\nabla_0^\mathcal E,\nabla_1^\mathcal E)
		=\ch_{\widehat{\mathcal G}}(\nabla_1^\mathcal E)-\ch_{\widehat{\mathcal G}}(\nabla_0^\mathcal E).
	\end{equation}
	
	\begin{defi}
		The differential twisted $K^0$-group
		$\widehat K^{0}(X,\widehat{\mathcal G})$ is generated by tuples
		\[
		x=(\mathcal E,\nabla^\mathcal E,\rho),
		\]
		modulo the relations:
		\begin{itemize}
			\item 
			$(\mathcal E_0,\nabla^{\mathcal E_0},\rho)\sim(\mathcal E_1,\nabla^{\mathcal E_1},\rho)$ for connection-preserving isomorphisms. 
			\item  $(\mathcal E_0,\nabla^{\mathcal E_0},\rho_0)+(\mathcal E_1,\nabla^{\mathcal E_1},\rho_1)
			\sim (\mathcal E_0\oplus\mathcal E_1,\nabla^{\mathcal E_0}\oplus\nabla^{\mathcal E_1},\rho_0+\rho_1)$.
			\item 
			$(\mathcal E,\nabla_0^\mathcal E,0)\sim(\mathcal E,\nabla_1^\mathcal E,\CS_{\widehat{\mathcal G}}(\nabla_0^\mathcal E,\nabla_1^\mathcal E))$.
			\item $(\mathcal E,\nabla^{\mathcal{E}},0)\sim0$, for $\mathcal{E}=(E,E)$, $\nabla^{\mathcal{E}}=(\nabla, \nabla)$. 
		\end{itemize}
		We write $[x]$ for the differential twisted $K$-class represented by $x$.
	\end{defi}
	\noindent
	The structure maps are defined as follows
	\[
	\begin{aligned}
		I_K:\ \widehat K^{0}(X,\widehat{\mathcal G})&\to K^{0}(X,{\mathcal G}),&
		[(\mathcal E,\nabla^\mathcal E,\rho)]&\mapsto [E^+]-[E^-],\\
		R_K:\ \widehat K^{0}(X,\widehat{\mathcal G})&\to \Omega^{\mathrm{even}}_{(d-H)\text{-clo}}(X),&
		[(\mathcal E,\nabla^\mathcal E,\rho)]&\mapsto \ch_{\widehat{\mathcal G}}(\nabla^\mathcal E)-(d-H)\rho,\\
		a_K:\ \Omega^{\mathrm{odd}}(X)/\mathrm{im}(d-H)&\to \widehat K^{0}(X,\widehat{\mathcal G}),&
		\rho&\mapsto [(0,0,-\rho]).
	\end{aligned}
	\]
	The well-definedness of $R_K$ follows from the transgression identity~\eqref{eq:CS-transgression}.
	Let
	\[
	\mathcal M_K^*(X,\widehat{\mathcal G})
	:=\bigl(\Omega^*(X)[u,u^{-1}],\ d_K=d-H\wedge u^{-1}\bigr),
	\qquad |u|=2,
	\]
	with the usual periodic grading.  The twisted Chern character form construction gives a natural
	map
	\[
	\ch'_K\colon K^0(X,\mathcal G)\longrightarrow H^{0}(\mathcal M_K^*(X,\widehat{\mathcal G})).
	\]
	
	\begin{prop}\label{prop:diff-twisted-K-model}
		The tuple
		\[
		\bigl(\widehat K^0(-,-),\mathcal M_K^*(-,-),\ch'_K,R_K,I_K,a_K\bigr)
		\]
		is a differential extension of twisted $K$-theory in degree zero, in
		the sense of Definition~2.2.
	\end{prop}
	
	\begin{proof}
		The underlying topological $K$-theory represented by super $U_{\tr}$-gerbe modules agrees with
		the gerbe module model of twisted $K$-theory, and
		the twisted
		Chern character form construction gives the twisted
		Chern character after passing to twisted de Rham
		cohomology, hence is a real isomorphism, 
		see \cite{BCMMS,MS03}.  
		This gives condition (i) of Definition~\ref{defi:diffextco},
		and the
		commutativity condition (ii) follows directly from construction. 
		
		For the exact sequence
		\[
		K^{-1}(X,\mathcal G)
		\xrightarrow{\ch_K}
		\Omega^{\mathrm{odd}}(X)/\im(d-H)
		\xrightarrow{a_K}
		\widehat K^0(X,\widehat{\mathcal G})
		\xrightarrow{I_K}
		K^0(X,\mathcal G)
		\to 0, 
		\]
		surjectivity of $I_K$ follows by choosing a super $U_{\tr}$-gerbe-module
		representative of a topological twisted $K$-class and then choosing compatible
		module connections.  The inclusion $\im(a_K)\subset\ker(I_K)$ is immediate.
		Conversely, for $x=(\mathcal E,\nabla^{\mathcal E},\rho)$, suppose $I_K[x]=0$, then there is a
		$U_{\tr}$-gerbe module $F$ and a module isomorphism
		\begin{equation}\label{K-stable}
			E^+\oplus F \xrightarrow{\cong} E^-\oplus F .
		\end{equation}
		Choose a compatible connection $\nabla^F$ on $F$, then 
		$
		(F,F,\nabla^F,\nabla^F,0),
		$
		represents the zero class. We have
		\[
		x\sim
		(E^+\oplus F,\;E^-\oplus F,\;
		\nabla^{E^+}\oplus\nabla^F,\;\nabla^{E^-}\oplus\nabla^F,\;\rho).
		\]
		Let $\nabla_0=(\nabla^{E^+}\oplus\nabla^F,\;\nabla^{E^-}\oplus\nabla^F)$, and 
		$\nabla_1$ be the diagonal connection on $\mathcal F=(E^+\oplus F,E^-\oplus F)$ compatible with \eqref{K-stable}. The change-of-connection
		relation gives
		\[
		x\sim
		(\mathcal{F}, \nabla_0, \rho)\sim
		(\mathcal{F}, \nabla_1, \rho+\CS_{\widehat{\mathcal G}}(\nabla_0,\nabla_1))
		\sim
		(0,0,\rho+\CS_{\widehat{\mathcal G}}(\nabla_0,\nabla_1)),
		\]
		since $(\mathcal F,\nabla_1,0)$ is degenerate.  On the other hand, we have
		\[
		[(0,0,\rho+\CS_{\widehat{\mathcal G}}(\nabla_0,\nabla_1))]
		=
		a_K\bigl(-\rho-\CS_{\widehat{\mathcal G}}(\nabla_0,\nabla_1)\bigr).
		\]
		Thus \(\ker(I_K)\subset \mathrm{im}(a_K)\), and hence
		\(\ker(I_K)=\mathrm{im}(a_K)\).
		
		Finally, $a_K(\rho)=0$ precisely when $\rho$ is the
		odd twisted Chern character of a class in $K^{-1}(X,\mathcal G)$, which gives
		exactness at the form term.  
		This is the same standard argument in differential $K$-theory, see \cite{simons2008structured, freed2010index}, 
		with $d$ replaced by $d-H$ and the usual
		Chern--Simons form replaced by \eqref{eq:twiCS}. 
	\end{proof}

	\subsubsection{Construction of the anomaly map}\label{sec:ano_cons}
	Consider the differential twisted $K$-theory $\widehat K^0(X,\widehat{\mathcal G}^{-1})$, and note that for its differential twist $\widehat{\mathcal G}^{-1}$,  the 3-curvature is $-H$. 
	Fix a representative
	\[
	x=(\mathcal E,\nabla^{\mathcal E},\rho),
	\qquad \mathcal E=(E^+,E^-),
	\]
	and write the curvature of its class as
	$
	R_K(x)=
	\ch_{\widehat{\mathcal G}^{-1}}(\nabla^{\mathcal E})-(d+H)\rho. 
	$
	
	\medskip
	
	Define the
	curvature component in the desired anomaly class by
	\begin{equation}\label{construction: om}
		\omega_x:=
		\bigl\{
		R_K(x)
		\otimes \widehat A e^\zeta
		\bigr\}^{(2k)}. 
	\end{equation}
	Combining the facts that $R_K(x)$ is $(d+H)$-closed and $\partial_\zeta(e^\zeta)=e^\zeta$, 
	we have $R_K(x)\otimes \widehat A e^\zeta$ is $D_H$-closed, hence 
	\[
	\omega_x \in \Omega^{2k}_{D_H-\mathrm{clo}}(X; N^\bullet_{\Spinc}).
	\]
	In the following, we construct a functional component
	\[
	h_x\in \Hom\big(
	\widehat{\Omega^{\Spinc}_{2k-1}}(X,\widehat{\mathcal G}),\RZ
	\big)
	\]
	satisfying the
	compatibility condition
	$
	h_x\circ a^{\Spinc}=\modZ\circ\langle - ,\omega_x\rangle.
	$
	
	\medskip
	
	Let
	\[
	C=(M,f,\nabla^{S^c},\Psi)
	\]
	be a closed $(2k-1)$-dimensional geometric $\widehat{\mathcal G}$-twisted
	$\Spinc$-chain over $X$.  Its twisted $\Spinc$-structure implies
	\[
	f^*DD(\mathcal G)=W_3(TM),
	\]
	and the pullback twist
	$f^*\widehat{\mathcal G}$ has torsion underlying topological class over $M$, so $f^*\widehat{\mathcal G}^{-1}$ is also torsion. Hence we may choose a finite rank representative
	\begin{equation}\label{eq:finite-rep-y}
		y=(V_y^+,V_y^-,\nabla^{V_y^+},\nabla^{V_y^-},\rho_y)
	\end{equation}
	for the pullback differential twisted $K$-class $f^*[x]$ in $\widehat K^0(M,f^*\widehat{\mathcal G}^{-1})$. 
	Tensoring with the $f^*\widehat{\mathcal G}$-twisted spinor module $S^c$, we write $S^c\otimes V_y^\pm$ as descended honest finite rank Clifford modules on $M$, and 
	$D_{C,y}^\pm$
	as the corresponding self-adjoint Dirac operators.  
	Define
	\begin{equation}\label{eq:cyclewise-bareta-y}
		\bar\eta_x(C;y):=
		\bar\eta(D_{C,y}^+)-\bar\eta(D_{C,y}^-)
		-
		\int_M \widehat A(TM)\wedge e^{\kappa_C}\wedge \rho_y
		\quad\in \RZ,
	\end{equation}
	where $\kappa_C=\tr_0(\widetilde{F^{S^c}})$ is the two-form associated to the differential twisted
	$\Spinc$-structure on $M$.  
	\begin{lem}\label{lem:bareta-independent-y}
		$\bar\eta_x(C;y)$ is independent of the choice of finite rank
		representative $y$ satisfying $[y]=[f^*x]$.
	\end{lem}
	
	\begin{proof}
		It suffices to check the claim for the change-of-connection relation. 
		For two representatives
		\[
		y_0=(V^\pm, \nabla_0^\pm, \rho_{y_0}), \quad
		y_1=(V^\pm, \nabla_1^\pm, \rho_{y_1}), \quad
		\rho_{y_1}=\rho_{y_0}+
		\CS_{f^*\widehat{\mathcal G}^{-1}}(\nabla_0^\pm,\nabla_1^\pm),
		\]
		the Atiyah--Patodi--Singer cylinder formula for the finite rank Clifford modules
		$S^c\otimes V^\pm$ gives
		\begin{align*}
			\bigl(\bar\eta(D_{C,y_1}^+)-\bar\eta(D_{C,y_1}^-)\bigr)
			-
			\bigl(\bar\eta(D_{C,y_0}^+)-\bar\eta(D_{C,y_0}^-)\bigr)
			\equiv
			\int_M \widehat A(TM)e^{\kappa_C}\,
			\CS_{f^*\widehat{\mathcal G}^{-1}}(\nabla_0^\pm,\nabla_1^\pm)
			\quad \modZ.
		\end{align*}
		Therefore
		\begin{align*}
			\bar\eta_x(C;y_1)-\bar\eta_x(C;y_0)
			\equiv
			\int_M \widehat A(TM)e^{\kappa_C}\,
			\CS_{f^*\widehat{\mathcal G}^{-1}}(\nabla_0^\pm,\nabla_1^\pm)
			-
			\int_M \widehat A(TM)e^{\kappa_C}\,(\rho_{y_1}-\rho_{y_0})
			=0\quad \modZ.
		\end{align*}
		Hence $\bar\eta_x(C;y)$ is independent of $y$.
	\end{proof}
	
	By Lemma~\ref{lem:bareta-independent-y}, we may simply write 
	\[
	\bar\eta_x(C):=\bar\eta_x(C;y).
	\]
	In fact, this construction may be interpreted as the differential $K$-theory pushforward in the untwisted case
	\cite{bunke2007smooth,freed2010index}. 
	Now we are ready to define the functional component in the desired anomaly class. 
	We set
	\begin{equation}\label{construction: h}
		h_x(C,\varphi):=
		\bar\eta_x(C)-\langle \varphi,\omega_x\rangle
		\quad\in \RZ,
	\end{equation}
	where $(C,\varphi)=(M,f,\nabla^{S^c},\Psi,\varphi)$ is a $(2k-1)$-dimensional differential twisted $\Spinc$-cycle. Furthermore, we have
	\begin{lem}\label{lem:h-welldef}
		The functional component $h_x$ descends to
		$\widehat{\Omega^{\Spinc}_{2k-1}}(X,\widehat{\mathcal G})$.
	\end{lem}
	\begin{proof}
		Additivity and isomorphism invariance are immediate from construction. 
		It suffices for us to verify that $h_x$ is invariant under the bordism relation. 
		Let
		\[
		B=(W,F,\nabla^{S^c_W},\Psi_W)
		\]
		be a $2k$-dimensional geometric $\widehat{\mathcal G}$-twisted $\Spinc$-chain
		with boundary $C=\partial B$.  
		Choose a finite rank representative
		\[
		Y=(U^+,U^-,\nabla^{U^+},\nabla^{U^-},\rho_Y)
		\]
		of the class $[F^*x]\in \widehat K^0(W,F^*\widehat{\mathcal G}^{-1})$.  Its
		restriction $\partial Y$ is a finite rank representative of $[f^*x]$ on
		$\partial W$.  By Lemma~\ref{lem:bareta-independent-y}, we may compute
		$\bar\eta_x(\partial B)$ using this boundary representative. Applying Atiyah--Patodi--Singer index theorem to the honest Clifford modules
		$S_W^c\otimes U^\pm$, we have
		\[
		\bar\eta(D_{\partial B,\partial Y}^{+})
		-
		\bar\eta(D_{\partial B,\partial Y}^{-})
		\equiv
		\int_W \widehat A(TW)e^{\kappa_W}
		\ch_{F^*\widehat{\mathcal G}^{-1}}(\nabla^{U^+},\nabla^{U^-}) \quad \modZ.
		\]
		By Stokes' theorem and $d\kappa_W=F^*H$,
		\[
		\int_{\partial W}\widehat A(T\partial W)e^{\kappa_{\partial W}}\rho_{\partial Y}
		=
		\int_W \widehat A(TW)e^{\kappa_W}(d+F^*H)\rho_Y.
		\]
		Since $[Y]=[F^*x]$, their curvatures agree
		\[
		R_K(Y)=
		\ch_{F^*\widehat{\mathcal G}^{-1}}(\nabla^{U^+},\nabla^{U^-})
		-(d+F^*H)\rho_Y
		=F^*R_K(x).
		\]
		Combining the above formulas gives
		\[
		\bar\eta_x(\partial B)
		\equiv
		\int_W\widehat A(TW)e^{\kappa_W}F^*R_K(x)
		=
		\langle \cw(B),\omega_x\rangle
		\quad\modZ,
		\]
		which is precisely the equality needed for the bordism relation in
		Definition~\ref{defi:gerbe-spinc-bord}. 
	\end{proof}
	
	To summarize the construction, we have
	
	\begin{prop}\label{prop:ano}
		The assignment
		$
		[x]\mapsto (\omega_x,h_x)
		$
		gives a well-defined homomorphism
		\[
		\widehat{\Phi}_{\widehat{\mathcal G}}\colon
		\widehat K^0(X,\widehat{\mathcal G}^{-1})
		\longrightarrow
		\bigl(\widehat{I\Omega^{\Spinc}_{\dR}}\bigr)^{2k}(X,\widehat{\mathcal G}).
		\]
	\end{prop}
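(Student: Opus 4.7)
The plan is to check four conditions: (i) $\omega$ is $D_H$-closed; (ii) $h$ is a well-defined additive $\mathbb{R}/\mathbb{Z}$-valued functional on $\widehat{\Omega^{\Spinc}_{2k-1}}(X,\widehat{\mathcal{G}})$, i.e., invariant under isomorphism, direct sum, and the bordism relation; (iii) the compatibility $h\circ a^{\Spinc}=\modZ\circ\langle-,\omega\rangle$; and (iv) invariance of the assignment under the defining relations of $\widehat{K}^{0}(X,\widehat{\mathcal{G}}^{-1})$. Items (i) and (iii) are immediate. For (i), use that $(d+H)\ch_{\widehat{\mathcal{G}}^{-1}}(\nabla^{\mathcal{E}})=0$ and $\partial_{\zeta}(\widehat{A}\,e^{\zeta})=\widehat{A}\,e^{\zeta}$, so $D_{H}\bigl((\alpha)\otimes\widehat{A}e^{\zeta}\bigr)=(d+H)\alpha\otimes\widehat{A}e^{\zeta}=0$. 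For (iii), $a^{\Spinc}(\phi)=(\emptyset,\emptyset,\emptyset,\emptyset,-\phi)$ has trivial Clifford module, whence $\bareta$ and the $\cw$-pairing vanish, leaving only $\langle\phi,\omega\rangle\modZ$.

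The substance lies in (ii). Let $(W,F,\nabla^{S^{c}_{W}},\Psi_{W})$ be a geometric $\widehat{\mathcal{G}}$-chain witnessing a bordism of $(M,f,\nabla^{S^{c}},\Psi)$. Because $S^{c}_{W}$ is an $F^{*}\widehat{\mathcal{G}}$-module and $\mathcal{E}$ is an $F^{*}\widehat{\mathcal{G}}^{-1}$-module, the tensor product $\mathfrak{E}_{W}:=S^{c}_{W}\otimes F^{*}(E-E')$ descends to a genuine $\mathbb{Z}_{2}$-graded Clifford bundle on $W$ with induced connection. Applying the Atiyah-Patodi-Singer theorem to the associated Dirac operator yields
\[
\mathrm{ind}(D^{\mathfrak{E}_{W}})\;\equiv\;\int_{W}\widehat{A}(TW)\wedge e^{\kappa_{W}}\wedge F^{*}\ch_{\widehat{\mathcal{G}}^{-1}}(\nabla^{\mathcal{E}})\;-\;\bareta(M,f,\nabla^{S^{c}},\Psi)\pmod{\mathbb{Z}},
\]
where $\kappa_{W}=\kappa'(\nabla^{S^{c}_{W}},\Psi_{W})$ is the two-form of \eqref{kappa_geo}. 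The $e^{\kappa_{W}}$ factor is precisely the ``interesting additional term'' mentioned in the introduction: it arises because the canonical spinor gerbe module contributes a nontrivial determinant-type twisting curvature beyond the Clifford piece. Since indices are integral, this congruence reduces $\bareta$ on $\partial W$ to an integral over $W$. Stokes theorem applied to $F^{*}\rho\wedge\widehat{A}(TW)\wedge e^{\kappa_{W}}$, combined with $d\kappa_{W}=F^{*}H$ and the fact that $|\rho|$ is odd, gives
\[
\int_{W}F^{*}\bigl((d+H)\rho\bigr)\wedge\widehat{A}(TW)\wedge e^{\kappa_{W}}\;=\;\langle\cw(M,f,\nabla^{S^{c}},\Psi),\,\rho\otimes\widehat{A}e^{\zeta}\rangle.
\]
Subtracting these two identities establishes $h(\partial\widehat{W},0)\equiv\langle\cw(W,\cdots),\omega\rangle\pmod{\mathbb{Z}}$, which is the bordism invariance required by Definition~\ref{def: diffSpincbordism}.

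For (iv), additivity and isomorphism invariance are immediate, and the trivial relation $(F,F,\nabla,\nabla,0)\sim 0$ gives $\mathfrak{E}=0$ and hence $(\omega,h)=0$. The Chern-Simons relation $(\mathcal{E},\nabla_{0}^{\mathcal{E}},0)\sim(\mathcal{E},\nabla_{1}^{\mathcal{E}},\CS_{\widehat{\mathcal{G}}^{-1}}(\nabla_{0}^{\mathcal{E}},\nabla_{1}^{\mathcal{E}}))$ preserves $\omega$ by the transgression identity $(d+H)\CS=\ch_{\widehat{\mathcal{G}}^{-1}}(\nabla_{1}^{\mathcal{E}})-\ch_{\widehat{\mathcal{G}}^{-1}}(\nabla_{0}^{\mathcal{E}})$, and preserves $h$ by the classical variational formula
\[
\bareta(\nabla_{1}^{\mathcal{E}})-\bareta(\nabla_{0}^{\mathcal{E}})\;\equiv\;\int_{M}\widehat{A}(M)\wedge e^{\kappa_{M}}\wedge F^{*}\CS_{\widehat{\mathcal{G}}^{-1}}(\nabla_{0}^{\mathcal{E}},\nabla_{1}^{\mathcal{E}})\pmod{\mathbb{Z}},
\]
again with the characteristic $e^{\kappa_{M}}$ correction. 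Well-definedness modulo $\im(d+H)$ on $\rho$ follows from $D_{H}(\alpha\otimes\widehat{A}e^{\zeta})=(d+H)\alpha\otimes\widehat{A}e^{\zeta}$ and the adjointness \eqref{duality_adjoint} between $D_{H}$ and $\partial_{H}$, since $\cw$ of a closed cycle is $\partial_{H}$-closed.

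The main obstacle is the APS input in the second paragraph: one must rigorously identify the local index density of $D^{\mathfrak{E}_{W}}$ as $\widehat{A}(TW)\wedge e^{\kappa_{W}}\wedge F^{*}\ch_{\widehat{\mathcal{G}}^{-1}}(\nabla^{\mathcal{E}})$, tracing how the rank of $S^{c}_{W}$ combines with the traceless Riemannian piece to produce $\widehat{A}$, while the $\Spinc$-type determinant of the gerbe module contributes $e^{\kappa_{W}}$ and the ungraded part yields the twisted Chern character of $\mathcal{E}$. All remaining steps are formal once this geometric input is in place.
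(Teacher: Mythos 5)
Your proof is correct and takes essentially the same route as the paper: the same verification of $D_H$-closedness of $\omega$, bordism invariance of $h$ via the Atiyah--Patodi--Singer index theorem combined with a Stokes computation involving the $e^{\kappa_W}$ factor, and Chern-Simons compatibility via the eta-variational formula on $M\times[0,1]$. The only cosmetic difference is that for invariance under $\rho\mapsto\rho-(d+H)\sigma$ you argue by the adjointness \eqref{duality_adjoint} of $D_H$ and $\partial_H$ against the $\partial_H$-closed current $\cw(M,\dots)$, whereas the paper states the same fact as a direct Stokes computation of $f^*(d+H)\sigma\wedge\widehat{A}(M)\wedge e^{\kappa_M}$ being exact on the closed $M$.
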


	\begin{proof}
		
		The assignment is clearly a homomorphism. We verify the compatibility condition. 
		For a current class $\alpha$, the cycle $a^{\Spinc}(\alpha)$ has
		empty geometric part and current component $-\alpha$.  Therefore
		\[
		h_x(a^{\Spinc}(\alpha))
		=-\langle -\alpha,\omega_x\rangle
		=\langle \alpha,
		\omega_x\rangle\quad \modZ.
		\]
		It remains to check dependence on the representative $x$.
		By definition, the curvature component depends only on the differential $K$-class. 
		By
		Lemma~\ref{lem:bareta-independent-y}, $\bar\eta_x(C)$ is independent of the finite rank representative for $[f^*x]$, hence the functional component $h_x$ also depends only on the class $[x]$.
		Thus the assignment
		is a well-defined homomorphism.
	\end{proof}

\bibliographystyle{alpha}
\bibliography{references_anderson}

\bigskip
\begingroup
\footnotesize

\noindent\textsc{Department of Mathematics, National University of Singapore, Singapore 119076}

\noindent\textit{Email address:} \href{mailto:mathanf@nus.edu.sg}{mathanf@nus.edu.sg}

\medskip

\noindent\textsc{Department of Mathematics, National University of Singapore, Singapore 119076}

\noindent\textit{Email address:} \href{mailto:yuanchuli@u.nus.edu}{yuanchuli@u.nus.edu}

\endgroup

\end{document}